\DeclareMathAlphabet{\eufrak}{U}{}{}{}  % Euler fraktur math
\SetMathAlphabet\eufrak{normal}{U}{euf}{m}{n}
\SetMathAlphabet\eufrak{bold}{U}{euf}{b}{n}
\newtheorem{prop}{Proposition}[section]
\newtheorem{theorem}[prop]{Theorem}
\newtheorem{lemma}[prop]{Lemma}
\newtheorem{corollary}[prop]{Corollary}
\newtheorem{assumption}[prop]{Assumption}
\theoremstyle{definition}
\newtheorem{remark}[prop]{Remark}
\newtheorem{remarks}[prop]{Remarks}
\newtheorem{definition}[prop]{Definition}
\newtheorem{discussion}[prop]{Discussion}
\newtheorem{notation}[prop]{Notation}
\newtheorem{convention}[prop]{Convention}
\newtheorem{definitionprop}[prop]{Definition-Proposition}
\numberwithin{equation}{section}
\def\E{\mathbb{E}}
\def\P{\mathbb{P}}
\def\real{\mathbb{R}}
\def\F{\mathcal{F}}
\def\1{\textbf{1}}
\def\ind#1{\textbf{1}_{\{#1\}}}
\newcommand{\be}{\begin{equation}}
\newcommand{\ee}{\end{equation}}
\newcommand{\bde}{\begin{displaymath}}
\newcommand{\ede}{\end{displaymath}}
\newcommand{\beq}{\begin{eqnarray*}}
\newcommand{\eeq}{\end{eqnarray*}}
\newcommand{\beqa}{\begin{eqnarray}}
\newcommand{\eeqa}{\end{eqnarray}}
\newcommand{\bel }{\left\{\begin{array}{ll}}
\newcommand{\eel}{\cr \end{array} \right.}
\newcommand{\bex}{\begin{ex} \rm }
\newcommand{\eex}{\end{ex}}
{

%\def\proof{\noindent {\it Proof. $\, $}}
%\def\finproof{\hfill $\Box$ \vskip 5 pt}

%%%%%%%%%%%%%%%%%%%%%%%%%%%%%%%%

\def\E{\mathbb E}
\def\F{{\cal F}}

\def\P{\mathbb P}

 %various

%\def\tr{\textcolor{red}}
%\def\mr{\textcolor{red}}
%\def\mb{\textcolor{blue}}

%%%%%%%%%%%%%%%%%%%%%%%%%%%%%%%%%%%%

\def\cal#1{\mathcal{#1}}

%\def\b{\textcolor{darkblue}}
%\def\r{\textcolor{red}}
%\definecolor{ying}{rgb}{0.8, 0.0, 0.04}

\DeclareSymbolFontAlphabet{\mathrsfs}{rsfs}

\author{Caroline Hillairet\footnote{ENSAE  Paris, CREST UMR 9194,
5  avenue Henry Le Chatelier
91120 Palaiseau, France.  Email: \texttt{caroline.hillairet@ensae.fr}} \and Anthony R\'eveillac\footnote{INSA de Toulouse, IMT UMR CNRS 5219, Universit\'e de Toulouse, 135 avenue de Rangueil 31077 Toulouse Cedex 4 France. \; Email: \texttt{anthony.reveillac@insa-toulouse.fr}} }

\title{On the chaotic expansion for counting processes  \footnote{This research is supported by a grant of the French National Research Agency (ANR), “Investissements d’Avenir” (LabEx Ecodec/ANR-11-LABX-0047) and the Joint Research Initiative "Cyber Risk Insurance: actuarial modeling" with the partnership of AXA Research Fund. }}

\begin{document}

\maketitle

\allowdisplaybreaks

\begin{abstract}
\noindent
We introduce and study an alternative form of the chaotic expansion for counting processes using the Poisson imbedding representation; we name this alternative form \textit{pseudo-chaotic expansion}. As an application, we prove that the coefficients of this pseudo-chaotic expansion for any linear Hawkes process are obtained in closed form, whereas those of the usual chaotic expansion cannot be derived explicitly. Finally, we study further the structure of linear Hawkes processes by constructing an example of a process in a pseudo-chaotic form that satisfies the stochastic self-exciting intensity equation which determines a Hawkes process (in particular its  expectation equals the one of a Hawkes process) but which fails to be a counting process.
\end{abstract}

\noindent
\textbf{Keywords:} Counting processes; Poisson imbedding representation; Malliavin calculus; Hawkes processes.\\
\noindent
\textbf{Mathematics Subject Classification (2020):} 60G55; 60G57; 60H07.

\section{Introduction}

Counting processes constitute a mathematical framework for modeling specific random events within a time series. The nature of the application and the structural features of a given time series may call for sophisticated models whose complexity goes beyond the standard Poisson process. For instance, the modeling of neuron's spikes in Neurosciences (see \textit{e.g.} \cite{Bremaud_Massoulie,Delattre_2016}) or the frequency of claims that may result in a cyber insurance contract (see \cite{Hillairet_Reveillac_Rosenbaum} for a short review on the literature) require counting processes with stochastic intensity  and whose frequency of the events {depends on} the past values of the system. Hawkes processes initially introduced in \cite{Hawkes} has become the paradigm of such processes. The so-called linear Hawkes process is a counting process $H$ (on a filtered probability space) with intensity process $\lambda$ satisfying :
\begin{equation}
\label{eq:introlambdaHawkes}
\lambda_t = \mu + \int_{(0,t)} \Phi(t-s) dH_s, \quad t\geq 0,
\end{equation}
where the constant $\mu>0$ is the baseline intensity and $\Phi:\real_+ \to \real_+$ is modeling the self-exciting feature of the process. Naturally conditions on $\Phi$ are required for a well-posed formulation. Well-posedness here is accurate as, from this formulation it appears that the pair $(H,\lambda)$ solves a two-dimensional SDE driven by a Poisson measure as we will make precise below. The so-called Poisson imbedding provides a way to formulate this equation. Let $(\Omega,\mathcal F,\P)$ be a probability space and $N$ a random Poisson measure on $\real_+^2$ with intensity $d\lambda(t,\theta):=dt d\theta$ the Lebesgue measure on $\real_+^2$. If $\lambda$ denotes a non-negative predictable with respect to the natural history of $N$ (whose definition will be recalled in Section \ref{section:preliminaries}) then the process $H$ defined as 
\begin{equation}
\label{eq:introimbedding}
H_t = \int_{(0,t]\times \real_+} \ind{\theta \leq \lambda_s} N(ds,d\theta), \quad t\geq 0,
\end{equation}
is a counting process with intensity $\lambda$, that is $H-\int_0^\cdot \lambda_s ds$ is a martingale. The Poisson imbedding refers to Representation (\ref{eq:introimbedding}) for counting processes. As mentioned, in case of a linear Hawkes process for instance, the Poisson imbedding representation (\ref{eq:introimbedding}) captures the equation feature of this process. Indeed combining (\ref{eq:introlambdaHawkes}) and (\ref{eq:introimbedding}) implies that the linear Hawkes processes can be seen as a system of weakly coupled SDEs with respect to $N$ as :
$$
\left\lbrace
\begin{array}{l}
H_t = \int_{(0,t]\times \real_+} \ind{\theta \leq \lambda_s} N(ds,d\theta) \\ \hspace{20em} t\geq 0, \\
\lambda_t = \mu + \int_{(0,t)} \Phi(t-s) \ind{\theta \leq \lambda_s} N(ds,d\theta).
\end{array}
\right.
$$
Under mild condition on $\Phi$ the second equation (and so the system) can be proved to be well-posed (see \textit{e.g.} \cite{Bremaud_Massoulie,Costa_etal,Hillairet_Reveillac_Rosenbaum}).\\\\
\noindent 
This representation also opens the way to a new line of research. Indeed, with this representation at hand, a counting process can then be seen as a functional of the two-dimensional Poisson measure $N$ for which stochastic analysis such as the Malliavin calculus is available. Recently, by combining the Malliavin calculus with Stein's method according to the Nourdin-Peccati methodology, quantitative limit theorems for Hawkes functionals have been derived in \cite{torrisi,HHKR,Khabou_Privault_Reveillac}. The specific Malliavin calculus developed in \cite{HHKR} for linear Hawkes processes follows from a Mecke formula provided in \cite{Hillairet_Reveillac_Rosenbaum} with application to the risk analysis of a class of cyber insurance contracts. Another main ingredient (not exploited so far in this context up to our knowledge) when dealing with Gaussian and Poisson functionals is given by the so-called chaotic expansion (also called Wiener-It\^o expansion). Let $t \geq 0$ and $H_t$ as in (\ref{eq:introimbedding}). The chaotic expansion of $H_t$ with respect to $N$ writes down as : 
\begin{equation}
\label{eq:introchaos}
H_t = \E[H_t] + \sum_{j=1}^{+\infty} \frac{1}{j!}  \int_{[0,t]\times \real_+} \cdots \int_{[0,t]\times \real_+}  f_j(x_1,\ldots,x_{j}) (N(dx_{1})-dx_1) \cdots (N(dx_{j})-dx_j),
\end{equation}
where $x_i \in \real_+^2$ and $f_j$ is a symmetric function on $(\real_+^2)^j$ defined in terms of the $j$th Malliavin derivative of $H_t$. This expression requires some details on its definition that will be given in Section \ref{section:preliminaries} below; but roughly speaking it allows one to expand the random variable into iterated integrals with respect to the compensated Poisson measure $N(dx_{j})-dx_j := N(dt_{j},d\theta_j)-dt_{j} d\theta_j$ (with $x_j:=(t_j,\theta_j)$).  Such decomposition is proved to be useful (for example in the context of Brownian SPDEs) provided that the coefficients $f_j$ can be computed or can be characterized by an equation. In case of a linear Hawkes process we will show in Section \ref{section:Hawkes} that the coefficients can be computed but are far from being explicit.\\\\\noindent
In this paper we prove as Theorem \ref{th:pseudochaoticcounting} that counting processes satisfy an alternative representation of the chaotic expansion that we name pseudo-chaotic representation. This pseudo chaotic expansion takes the form of : 
\begin{equation}
\label{eq:intropseudochaos}
H_t = \sum_{j=1}^{+\infty} \frac{1}{j!}  \int_{[0,t]\times \real_+} \cdots \int_{[0,t]\times \real_+}  c_j(x_1,\ldots,x_{j}) N(dx_{1}) \cdots N(dx_{j}),
\end{equation}
involving iterated integrals of the counting measure $N$ only (and not its compensated version). Whereas any square integrable random variable $F$ admits a chaotic expansion of the form (\ref{eq:introchaos}) we characterize in Theorem \ref{th:characpseudo} those random variables for which a pseudo-chaotic expansion of the form (\ref{eq:intropseudochaos}) is valid. As mentioned any variable $F=H_t$ with $H$ a counting process belongs to this set. \\\\
\noindent
In case where $H$ is a linear Hawkes process, in contradistinction to the coefficients $f_j$ in the classical chaotic expansion (\ref{eq:introchaos}) of $H_t$ at some time $t$ which can not be computed explicitly, coefficients $c_j$ of the pseudo-chaotic expansion (\ref{eq:intropseudochaos}) are explicit and given in Theorem \ref{th:explicitHawkes} (see Discussion \ref{discussion:avantageforHawkes}). This provides then a closed-form expression to linear Hawkes processes. Finally, we study further in Section \ref{section:almostHawkes} the structure of linear Hawkes processes by constructing an example of a process in a pseudo-chaotic form that satisfies the stochastic intensity equation (\ref{eq:introlambdaHawkes}) but which fails to be a counting process (see Theorem \ref{th:almostHawkes} and Discussion \ref{discussion:finale}).\\\\
\noindent
The paper is organized as follows. Notations and the description of the Poisson imbedding together with elements of Malliavin's calculus are presented in Section \ref{section:preliminaries}. The notion of pseudo-chaotic expansion is presented in Section \ref{section:pseudochaotic}. The application to linear Hawkes processes and their explicit representation is given in Section \ref{section:Hawkes}. Finally, Section \ref{section:almostHawkes} is dedicated to the construction of an example of a process in a pseudo-chaotic form that satisfies the stochastic intensity equation but which fails to be a counting process.  
 
\section{Preliminaries and notations}
\label{section:preliminaries}

\subsection{General conventions and notations}

We set $\mathbb{N}^*:=\mathbb{N} \setminus \{0\}$ the set of positive integers. We make use of the convention :

\begin{convention}
\label{convention:sums}
For $a, b \in \mathbb Z$ with $a > b$, and for any map $\rho : \mathbb Z \to \real$,
$$ \prod_{i=a}^b \rho(i) :=1; \quad \sum_{i=a}^b \rho(i) :=0.$$
\end{convention}

We set 
\begin{equation}
\label{eq:X}
\mathbb X:= \real_+\times \real_+ = \{x=(t,\theta), \; t \in \real_+, \; x\in \real_+\};
\end{equation}
Throughout this paper we will make use of the notation $(t,\theta)$ to refer to the first and second coordinate of an element in $\mathbb X$.

\begin{notation}
\label{notation:ordered}
Let $k\in \mathbb N^*$ and $(x_1,\ldots,x_k)=((t_1,\theta_1),\ldots,(t_k,\theta_k))$ in $\mathbb X^k$. We set $(x_{(1)},\ldots,x_{(k)})$ the ordered in the $t$-component of $(x_1,\ldots,x_k)$ with \;
$0 \leq t_{(1)} \leq \cdots \leq t_{(k)} ,$\;
and write $x_{(i)}:=(t_{(i)},\theta_{(i)})$. 
\end{notation}
\noindent We simply write $dx:=dt \, d\theta$ for the Lebesgue measure on $\mathbb X$. We also set $\mathcal B(\mathbb X)$ the set of Borelian of $\mathbb X$.

\subsection{Poisson imbedding and elements of Malliavin calculus}
Our approach lies on the so-called Poisson imbedding representation allowing one to represent a counting process with respect to a baseline random Poisson measure on $\mathbb X$. Most of the elements presented in this section are taken from \cite{Privault_2009,Last2016}.\\
\noindent 
We define $\Omega$ the space of configurations on $\mathbb X$ as 
$$ \Omega:=\left\{\omega=\sum_{i=1}^{n} \delta_{x_i}, \; x_i:=(t_{i},\theta_i) \in \mathbb X,\; i=1,\ldots,n,\; 0=t_0 < t_1 < \cdots < t_n, \; \theta_i \in \mathbb{R}_+, \; n\in \mathbb{N}\cup\{+\infty\} \right\}.$$
Each path of a counting process is represented as an element $\omega$ in $\Omega$ which is a $\mathbb N$-valued $\sigma$-finite measure on $\mathbb X =\mathbb{R}_+^2$. Let $\mathcal F$ be the $\sigma$-field associated to the vague topology on $\Omega$. Let $\P$ the Poisson measure on $ \Omega$ under which the canonical process $N$ on $\Omega$ is a Poisson process with intensity one that is : 
$$ (N(\omega))([0,t]\times[0,b])(\omega):=\omega([0,t]\times[0,b]), \quad t \geq 0, \; b \in \mathbb{R}_+,$$
is an homogeneous Poisson process with intensity one ($N([0,t]\times[0,b])$ is a Poisson random variable with intensity $ b t$ for any $(t,b) \in \mathbb X$). We set $\mathbb F^N:=(\F_t^N)_{t\geq 0}$ the natural history of $N$, that is $\mathcal{F}_t^N:=\sigma\{N( \mathcal T  \times B), \; \mathcal T \subset \mathcal{B}([0,t]), \; B \in \mathcal{B}(\real_+)\}$. The expectation with respect to $\P$ is denoted by $\E[\cdot]$. We also set $\mathcal{F}_\infty^N:=\lim_{t \to +\infty} \mathcal{F}_t^N$.

\subsubsection{Add-points operators and the Malliavin derivative}

We introduce some elements of Malliavin calculus on Poisson processes.\\
\noindent
For $n\in \mathbb N^*$, $f:\mathbb X^n \to \mathbb R$ is symmetric if for any permutation $\sigma$ on $\{1,\ldots,n\}$, and for any $(x_1,\ldots,x_n) \in \mathbb X^n$, $f(x_1,\ldots,x_n) = f(x_{\sigma(1)},\ldots,x_{\sigma(n)})$.
We set :
$$ L^0(\Omega):=\left\{ F:\Omega \to \real, \; \mathcal{F}_\infty^N-\textrm{ measurable}\right\},$$
$$ L^2(\Omega):=\left\{ F \in L^0(\Omega), \; \E[|F|^2] <+\infty\right\}.$$
Let for $j\in \mathbb{N}^*$
\begin{equation}
\label{definition:L2j}
L^2(\mathbb X^j) := \left\{f:\mathbb{X}^j \to \real, \; \int_{\mathbb{X}^j} |f(x_1,\cdots,x_j)|^2 dx_1 \cdots dx_j <+\infty\right\},
\end{equation}
and
\begin{equation}
\label{definition:symm2}
L^2_s(\mathbb X^j) := \left\{f \textrm{ symmetric and } f \in L^2(\mathbb X^j) \right\}
\end{equation}
the set of symmetric square integrable functions $f$ on $\mathbb{X}^j$. \\\\
\noindent
For $h \in L^2(\mathbb X)$ and $j\geq 1$ we set $h^{\otimes j} \in L_s^2(\mathbb X^j)$ defined as : 
\begin{equation}
\label{eq:otimes}
h^{\otimes j}(x_1,\ldots,x_j) := \prod_{i=1}^j h(x_i), \quad (x_1,\ldots,x_j)\in \mathbb X^j.
\end{equation}
\noindent
The main ingredient we will make use of are the add-points operators on the Poisson space $\Omega$. 

\begin{definition}$[$Add-points operators$]$\label{definitin:shifts}
\begin{itemize}
\item[(i)]
For $k$ in $\mathbb N^*$, and any subset of $\mathbb X$ of cardinal $k$ denoted $\{x_i, \; i\in \{1,\ldots,k\}\} \subset \mathbb X$, we set the measurable mapping :
\begin{eqnarray*}
\varepsilon_{(x_1,\ldots,x_k)}^{+,k} : \Omega & \longrightarrow & \Omega \\
     \omega & \longmapsto   & \omega + \sum_{i=1}^k \delta_{x_i};
\end{eqnarray*}
with the convention that given a representation of $\omega$ as $\omega=\sum_{i=1}^{n} \delta_{y_i}$ (for some $n\in \mathbb N^*$, $y_i \in \mathbb X$), $\omega + \sum_{i=1}^k \delta_{x_i}$ is understood as follows\footnote{Note that given fixed atoms $(x_1,\ldots,x_n)$, as $\P$ is the Poisson measure on $\Omega$, with $\P$-probability one, marks $x_i$ do not belong to the representation of $\omega$.} :
\begin{equation}
\label{eq:addjumpsum}
\omega + \sum_{i=1}^k \delta_{x_i} :=  \sum_{i=1}^{n} \delta_{y_i} + \sum_{i=1}^k \delta_{x_i} \ind{x_i \neq y_i}.
\end{equation}
\item[(ii)] When $k=1$ we simply write $\varepsilon_{x_1}^{+}:=\varepsilon_{x_1}^{+,1}$.
\end{itemize}
\end{definition}
\noindent We now define the Malliavin derivative operator. 

\begin{definition}
\label{definition:Dn}
For $F$ in $L^2(\Omega)$, $n\in \mathbb N^*$, and $(x_1,\ldots,x_n) \in \mathbb X^n$, we set 
\begin{equation}
\label{eq:Dn}
D_{(x_1,\ldots,x_n)} F:= F\circ \varepsilon_{(x_1,\ldots,x_n)}^{+,n} - F.
\end{equation}
For instance when $n=1$, we write $D_x F := D_x^1 F$ which is the difference operator (also called add-one cost operator\footnote{see \cite[p.~5]{Last2016}}). Note that with this definition, for any $\omega$ in $\Omega$, the mapping 
$$ (x_1,\ldots,x_n) \mapsto D_{(x_1,\ldots,x_n)}^n F (\omega) $$
is symmetric and belongs to $L^2_s(\mathbb X^j)$ defined as (\ref{definition:symm2}). 
\end{definition}

\noindent We extend this definition to the iterated Malliavin derivatives.

\begin{prop}(See \textit{e.g.} \cite[Relation (15)]{Last2016})
\label{prop:Dnaltenative}
Let $F$ in $L^2(\Omega)$, $n\in \mathbb N^*$, and $(x_1,\ldots,x_n) \in \mathbb X^n$. We set the $n$th iterated Malliavin derivative operator $D^n$ as 
$$ D^n F = D (D^{n-1} F), \quad n\geq 1; \quad D^0 F:=F.$$
It holds that 
$$ D^n_{(x_1,\ldots,x_n)} F (\omega)= \sum_{J\subset \{1,\cdots,n\}} (-1)^{n-|J|} F\left(\omega + \sum_{j\in J} \delta_{x_j}\right), \quad \textrm{ for a.e. } \omega \in \Omega,$$
where the sum stands for all the subsets $J$ of $\{1,\cdots,n\}$ and $|J|$ denotes the cardinal of $J$.
\end{prop}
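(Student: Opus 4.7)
The plan is to prove the formula by induction on $n$, using only the recursive definition $D^n F = D(D^{n-1}F)$ together with the definition of the one-point add operator $D_x F(\omega) = F(\omega + \delta_x) - F(\omega)$. The symmetry of the resulting expression (hence its $L^2_s$ membership) will be an immediate byproduct, since the explicit formula on the right-hand side is manifestly invariant under any permutation of $(x_1,\ldots,x_n)$.

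For the base case $n=1$, the two subsets of $\{1\}$ are $\emptyset$ and $\{1\}$, yielding $(-1)^1 F(\omega) + (-1)^0 F(\omega+\delta_{x_1}) = F(\omega+\delta_{x_1})-F(\omega)$, which agrees with $D_{x_1}F(\omega)$ from Definition~\ref{definition:Dn}.

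For the inductive step, assume the identity holds at level $n-1$. By the recursion,
\[
D^n_{(x_1,\ldots,x_n)}F(\omega) \;=\; \bigl(D^{n-1}_{(x_1,\ldots,x_{n-1})}F\bigr)\bigl(\omega+\delta_{x_n}\bigr) \;-\; \bigl(D^{n-1}_{(x_1,\ldots,x_{n-1})}F\bigr)(\omega).
\]
Applying the induction hypothesis to each term and combining the two sums over $J\subset\{1,\ldots,n-1\}$, I will split the resulting expression along whether the index $n$ belongs to the subset or not: the first sum yields exactly the subsets $J'\subset\{1,\ldots,n\}$ that contain $n$, via the correspondence $J'=J\cup\{n\}$ (so $|J'|=|J|+1$ and the sign $(-1)^{n-1-|J|}$ becomes $(-1)^{n-|J'|}$), while the minus sign in front of the second sum produces $(-1)^{n-|J|}$ for subsets $J'=J$ that do not contain $n$. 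Taking the union of these two collections gives all subsets of $\{1,\ldots,n\}$, which is the claimed formula.

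The only subtle point is the convention \eqref{eq:addjumpsum}: when one or several of the $x_i$'s already appear in the representation of $\omega$, the operator $\varepsilon^{+,k}_{(x_1,\ldots,x_k)}$ does not duplicate these atoms. This might in principle break the telescoping in the induction, because $\varepsilon^+_{x_n}\circ\varepsilon^{+,n-1}_{(x_1,\ldots,x_{n-1})}$ need not coincide with $\varepsilon^{+,n}_{(x_1,\ldots,x_n)}$ at a particular $\omega$ if $x_n$ coincides with some $x_i$ or with an atom of $\omega$. However, since $\P$ is the Poisson measure on $\Omega$, the set of configurations $\omega$ for which the fixed points $x_1,\ldots,x_n$ coincide with atoms of $\omega$ has $\P$-measure zero (atoms of a Poisson point process are almost surely distinct from any prescribed countable set). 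The identity $\varepsilon^+_{x_n}\circ \varepsilon^{+,n-1}_{(x_1,\ldots,x_{n-1})}=\varepsilon^{+,n}_{(x_1,\ldots,x_n)}$ therefore holds for $\P$-a.e.\ $\omega$ provided the $x_i$ themselves are pairwise distinct, which is again the generic situation in the Lebesgue-a.e.\ sense on $\mathbb X^n$. Handling this negligible set carefully is the only nontrivial bookkeeping; once past it the induction is purely algebraic, and the membership of $(x_1,\ldots,x_n)\mapsto D^n_{(x_1,\ldots,x_n)}F(\omega)$ in $L^2_s(\mathbb X^n)$ for a.e.\ $\omega$ follows from the symmetric form of the right-hand side combined with the standard $L^2$-boundedness of iterated difference operators on the Poisson space (see \cite{Last2016}).
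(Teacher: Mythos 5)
Your induction is correct: the base case matches Definition \ref{definition:Dn}, the split of subsets of $\{1,\ldots,n\}$ according to whether they contain $n$ gives exactly the right signs ($(-1)^{n-1-|J|}\mapsto(-1)^{n-|J'|}$ for $J'=J\cup\{n\}$), and your handling of the convention (\ref{eq:addjumpsum}) on the $\P$-null set of configurations whose atoms meet the prescribed points $x_1,\ldots,x_n$ is the right way to justify the ``for a.e.\ $\omega$'' qualifier. The paper itself gives no proof of this proposition --- it is quoted from the reference (Last 2016, Relation (15)) --- and your argument is precisely the standard one underlying that citation, so there is nothing further to compare.
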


\begin{remark}
Note that $\omega + \sum_{j\in J} \delta_{x_j}$ is understood according to (\ref{eq:addjumpsum}).
\end{remark}

\subsubsection{Iterated integrals and the Chaotic expansion}

The decompositions we are going to deal with take the form of iterated stochastic integrals whose definition is made precise in this section.

\begin{notation}
\label{notation:simplex}
For $j\in \mathbb N^*$, $T>0$ and $M>0$ we set :
$$\Delta_j:=\left\{(x_1,\cdots,x_j) \in \mathbb X^j, \; x_i \neq x_k, \; \forall i\neq k \in \{1,\cdots,j\}\right\}.$$
\end{notation}

\begin{definition}
\label{definition:interated}
Let $j \in \mathbb{N}^*$ and $f_j$ an element of $L_s^2(\mathbb X^j)$. We set $I_j(f_j)$ the $j$th iterated integral of $f_j$ against the compensated Poisson measure defined as : 
\begin{align}
\label{eq:In}
& \hspace{-2em} I_j(f_j) \nonumber \\
&\hspace{-2em}:= \int_{\Delta_j}  f_j(x_1,\ldots,x_{j}) (N(dx_{1})-dx_1) \cdots (N(dx_j)-dx_j) \nonumber \\
&\hspace{-2em}=j! \int_{\mathbb X} \int_{[0,t_{j-1})\times \real_+} \cdots \int_{[0,t_{2})\times \real_+} f_j(x_1,\ldots,x_{j}) (N(dx_{1})-dx_1) \cdots (N(dx_j)-dx_j) \nonumber \\
&\hspace{-2em}=j! \int_{\mathbb X} \int_{[0,t_{j-1})\times \real_+} \cdots \int_{[0,t_{2})\times \real_+} f_j((t_1,\theta_1),\ldots,(t_j,\theta_j)) (N(d t_1,d\theta_1)-dt_1 d\theta_1) \cdots (N(d t_j,d\theta_j)-dt_j d\theta_j)
\end{align} 
where we recall the notation $x_i=(t_i,\theta_i)$ and $dx_i=dt_i \, d\theta_i$.
Recall that all the integrals are defined pathwise.
\end{definition}

\noindent
These iterated integrals naturally appear in the chaotic expansion recalled below.

\begin{theorem}[See \textit{e.g.} Theorem 2 in \cite{Last2016}]
\label{th:chaoticgeneral}
Let $F$ in $L^2(\Omega)$. Then 
$$ F = \E[F] + \sum_{j=1}^{+\infty} \frac{1}{j!} I_j(f_j^F),$$
where the convergence of the series holds in $L^2(\omega,\P)$ and where the coefficients $f_j^F$ are the elements of $L_s^2(\mathbb X^j)$ (see (\ref{definition:symm2})) given as 
\begin{eqnarray*}
f_j^F : \mathbb X^j & \longrightarrow & \real_+ \\
      (x_1,\ldots,x_j) & \longmapsto   & \E\left[D_{(x_1,\cdots,x_j)}^j F\right].
\end{eqnarray*}
In addition the decomposition is unique in the sense that : if there exist elements $(g_j)_{j\geq 1}$ with $g_j \in L_s^2(\mathbb X^j)$ such that 
$$ F =  \E[F] + \sum_{j=1}^{+\infty} \frac{1}{j!} I_j(g_j)$$
then $g_j = f_j^F, \; dx-a.e., \; \forall j\geq 1.$
\end{theorem}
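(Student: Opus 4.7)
My plan is to follow the classical strategy for Wiener--It\^o chaos, adapted to the Poisson case, with four ingredients: isometry of iterated integrals, the action of $D$ on a chaos, density of chaoses, and identification of coefficients via Stroock's formula.

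First I would establish an orthogonality/isometry relation for the integrals defined in Definition \ref{definition:interated}. Using the Mecke formula (or a direct Campbell-type computation on the simplex $\Delta_j$), one shows that for $f_j \in L_s^2(\mathbb{X}^j)$ and $g_k \in L_s^2(\mathbb{X}^k)$,
$$
\E\!\left[ I_j(f_j)\, I_k(g_k) \right] \;=\; \delta_{jk} \, j!\, \langle f_j, g_k\rangle_{L^2(\mathbb{X}^j)}.
$$
In particular the $L^2$-convergence of the series in the theorem reduces to checking $\sum_j \tfrac{1}{j!}\|f_j^F\|^2 < +\infty$, and the chaoses $\mathcal{H}_j := \{I_j(f)/j!\,:\, f\in L_s^2(\mathbb{X}^j)\}$ form mutually orthogonal closed subspaces of $L^2(\Omega)$, with uniqueness (the last clause of the theorem) following immediately by pairing any alternative decomposition against $I_j(g_j)$.

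Next I would compute the action of $D$ on a fixed chaos. For $f_j \in L_s^2(\mathbb{X}^j)$ and $x \in \mathbb{X}$,
$$
D_x I_j(f_j) \;=\; j\, I_{j-1}\!\bigl( f_j(x,\cdot) \bigr),
$$
which follows from the definition $D_x F = F\circ \varepsilon_x^+ - F$ together with the combinatorics of which of the $j$ atoms in the iterated integral is pinned to $x$ after an add-point. Iterating gives, for $k \leq j$,
$$
D^k_{(x_1,\ldots,x_k)} I_j(f_j) \;=\; \frac{j!}{(j-k)!}\, I_{j-k}\!\bigl( f_j(x_1,\ldots,x_k,\cdot) \bigr),
$$
and is zero for $k>j$. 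Since iterated integrals of positive order are centered, taking expectation yields
$$
\E\!\left[ D^k_{(x_1,\ldots,x_k)} I_j(f_j) \right] \;=\; k!\, f_j(x_1,\ldots,x_k)\, \mathbf{1}_{\{j=k\}}.
$$
Granted the expansion $F = \E[F] + \sum_j \tfrac{1}{j!} I_j(f_j^F)$, applying $\E[D^n_{(x_1,\ldots,x_n)} \cdot]$ term by term identifies $f_n^F(x_1,\ldots,x_n) = \E[D^n_{(x_1,\ldots,x_n)} F]$, which is exactly Stroock's formula stated in the theorem.

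The genuinely non-trivial step --- and the main obstacle --- is density: showing that $\mathrm{Vect}(\bigoplus_{j\geq 0} \mathcal{H}_j)$ is dense in $L^2(\Omega,\mathcal{F}_\infty^N,\P)$. The canonical route is through exponential (or Dol\'eans--Dade) vectors: for bounded, compactly supported $h : \mathbb{X}\to \mathbb{R}$ with $h>-1$, set
$$
\mathcal{E}(h) \;:=\; \exp\!\left( \int_{\mathbb X} \ln(1+h(x))\, N(dx) - \int_{\mathbb X} h(x)\, dx \right).
$$
One checks, by a direct combinatorial expansion of the Poisson exponential (or by solving the SDE $d\mathcal{E}_t(h)= \mathcal{E}_{t-}(h) \int h(t,\theta)(N(dt,d\theta)-dt d\theta)$ iteratively), that
$$
\mathcal{E}(h) \;=\; 1 + \sum_{j\geq 1} \frac{1}{j!}\, I_j\!\bigl(h^{\otimes j}\bigr),
$$
with $h^{\otimes j}$ as in (\ref{eq:otimes}). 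Totality of $\{\mathcal{E}(h)\}_h$ in $L^2(\Omega)$ then follows from the fact that Laplace/Fourier functionals of $N$ separate points on $L^2$, since $N$ is characterized by its Laplace transform and $\mathcal{F}_\infty^N$ is generated by the values $N(\mathcal{T}\times B)$. Once density is secured, decomposing $F\in L^2(\Omega)$ onto the orthogonal chaoses $\mathcal{H}_j$ delivers the existence part, Step 2 identifies the coefficients as $f_j^F = \E[D^j F]$, and the isometry of the first step closes the uniqueness clause.
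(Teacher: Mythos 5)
The paper does not prove this statement: it is quoted verbatim as a known result, with the proof deferred to Theorem~2 of \cite{Last2016}, so there is no in-paper argument to compare yours against. Your outline is the standard Gaussian-style route adapted to the Poisson setting (isometry of the $I_j$'s, Stroock's formula via $D_xI_j(f)=jI_{j-1}(f(x,\cdot))$, totality of Dol\'eans--Dade exponential vectors), and all four ingredients are correct as stated; this is a legitimate proof skeleton and is consistent with Proposition~\ref{prop:derinIj} and the proof of Definition-Proposition~\ref{definition:truncatedstuffbis} in the paper. Two points are thinner than they should be if you intend this as a complete proof rather than a sketch. First, the identification step applies $\E[D^n_{(x_1,\ldots,x_n)}\,\cdot\,]$ term by term to an $L^2$-convergent series; since $D$ is the pathwise add-points operator and $\varepsilon^+_x$ does not preserve $\P$, the map $F\mapsto F\circ\varepsilon^{+}_{x}$ is not continuous on $L^2(\Omega)$ for fixed $x$, so this interchange needs a Mecke-type integrated bound or a closability argument rather than being automatic. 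Second, and relatedly, the theorem asserts that $(x_1,\ldots,x_j)\mapsto\E[D^j_{(x_1,\ldots,x_j)}F]$ is a well-defined element of $L^2_s(\mathbb X^j)$ for \emph{every} $F\in L^2(\Omega)$; your argument derives this only a posteriori from an assumed expansion, whereas the reference proves it directly by first treating bounded functionals of the restriction of $N$ to sets of finite measure and then extending by density. Patching these two points would make your proof complete; as written it establishes the result on a dense class and leaves the extension implicit.
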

\noindent
This decomposition is similar to the Wiener-It\^o decomposition on Gaussian spaces. We conclude this section by recalling the link between the iterated Malliavin derivative and the iterated integrals. 

\begin{theorem}
\label{theorem:generalizedIPP}
Let $j\in \mathbb N^*$, $g_j$ in $L^2_s(\mathbb X^j)$ and $F$ in $L^2(\Omega)$. Then : 
$$ \E\left[\int_{\mathbb X^j} g_j(x_1,\ldots,x_j) \, D_{(x_1,\ldots,x_j) }^j F dx_1\ldots dx_j \right] = \E\left[F I_j(g_j)\right].$$
\end{theorem}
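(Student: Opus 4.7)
The plan is to reduce the duality identity to the $L^2$-isometry of iterated integrals against the compensated Poisson measure, exploiting the chaotic expansion from Theorem \ref{th:chaoticgeneral}. Fix $j\in\mathbb N^*$, $g_j\in L^2_s(\mathbb X^j)$ and $F\in L^2(\Omega)$.

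First, apply Theorem \ref{th:chaoticgeneral} to $F$, writing
$$ F = \E[F] + \sum_{k=1}^{+\infty} \frac{1}{k!}\, I_k(f_k^F), \qquad f_k^F(x_1,\ldots,x_k) = \E\left[D^k_{(x_1,\ldots,x_k)} F\right],$$
with convergence in $L^2(\Omega,\P)$. Multiply both sides by $I_j(g_j)$, take expectations, and use the standard fact that iterated integrals of different orders against the compensated Poisson measure are orthogonal in $L^2(\Omega)$, together with the isometry $\E[I_j(f)I_j(g)] = j!\,\langle f,g\rangle_{L^2_s(\mathbb X^j)}$ for symmetric $f,g$ (which is the content of the uniqueness assertion in Theorem \ref{th:chaoticgeneral} and can be read off by computing the chaos expansion of $I_j(g_j)$ itself). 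Since $\E[I_j(g_j)]=0$, this yields
$$ \E\bigl[F\,I_j(g_j)\bigr] \;=\; \frac{1}{j!}\,\E\bigl[I_j(f_j^F)\,I_j(g_j)\bigr] \;=\; \int_{\mathbb X^j} f_j^F(x_1,\ldots,x_j)\, g_j(x_1,\ldots,x_j)\, dx_1\cdots dx_j.$$

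Second, substitute $f_j^F(x_1,\ldots,x_j) = \E[D^j_{(x_1,\ldots,x_j)}F]$ into the right-hand side and invoke Fubini's theorem to swap the expectation and the Lebesgue integration on $\mathbb X^j$, producing
$$ \E\bigl[F\,I_j(g_j)\bigr] \;=\; \E\!\left[\int_{\mathbb X^j} g_j(x_1,\ldots,x_j)\, D^j_{(x_1,\ldots,x_j)} F \, dx_1\cdots dx_j\right],$$
which is the claimed identity.

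The main obstacle is justifying the use of Fubini, which requires showing that $(x_1,\ldots,x_j)\mapsto D^j_{(x_1,\ldots,x_j)}F$ lies in $L^1(\Omega\times\mathbb X^j, \P\otimes |g_j|\,dx)$. The natural route is a density argument: establish the identity first for $F$ whose chaos expansion has only finitely many nonzero terms (where the iterated derivative is itself a finite sum of iterated integrals, hence manifestly integrable against $|g_j|$ by Cauchy--Schwarz and the isometry), and then extend to a general $F\in L^2(\Omega)$ by taking the $L^2(\Omega)$-limit of truncated chaos expansions on the right-hand side and using Cauchy--Schwarz plus $\E[I_j(g_j)^2] = j!\|g_j\|^2_{L^2_s(\mathbb X^j)}$ on the left-hand side. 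Once this is carried out, the three displayed equalities chain together to give the result.
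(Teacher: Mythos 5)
Your proposal is correct in substance but follows a genuinely different route from the paper's. The paper introduces the divergence operator $\delta$ as the adjoint of $D$, identifies $I_j(g_j)$ with the iterated divergence $\delta^j(g_j)$ of the deterministic kernel via the recursion $I_j(g)=\delta(I_{j-1}(g(\cdot,\bullet)))$, and concludes by induction on $j$ using the first-order integration by parts (\ref{eq:IPPgeneralized}). You instead expand $F$ in chaos, pair against $I_j(g_j)$, and invoke orthogonality of chaoses of different orders together with the isometry $\E[I_j(f)I_j(g)]=j!\,\langle f,g\rangle_{L^2(\mathbb X^j)}$, which isolates the $j$th coefficient $f_j^F=\E[D^jF]$; Fubini then converts $\int_{\mathbb X^j} f_j^F g_j\,dx$ into the stated left-hand side. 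Both arguments rest on standard external inputs: the paper on the existence and duality of $\delta$ from \cite{Nualart_Vives_90,Last2016}, you on the orthogonality and isometry of multiple Poisson integrals, which is likewise in \cite{Last2016} --- though it is a slight overstatement to say the isometry is ``the content of'' the uniqueness assertion of Theorem \ref{th:chaoticgeneral}; it is rather the standard ingredient from which that uniqueness is usually deduced, so you should cite it as such rather than derive it from the theorem. Your route has the advantage of avoiding the divergence operator entirely and of making explicit the one genuine analytic issue, namely the integrability of $D^jF$ against $|g_j|\,dx\otimes\P$ needed for the Fubini interchange, which the paper's sketch passes over silently (the statement itself tacitly assumes the left-hand side is well defined); the truncation-and-density argument you outline for that step is the appropriate fix. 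Neither proof is fully rigorous on domain questions, but yours is no less complete than the paper's and is arguably more self-aware about where the gap lies.
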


\begin{proof}
This result is standard although we could not find a precise reference fitting to our framework so we provide a sketch of the proof. To the Malliavin derivative, one can associate its dual operator named the divergence operator $\delta$ on a subset of the measurable elements $u:\mathbb X\times \Omega \to \real$. More precisely, for any such $u$ in $\textrm{Dom(}\delta\textrm{)}$ (the domain of the operator see for instance \cite{Nualart_Vives_90,Privault_2009,Last2016}), $\delta(u)$ denotes the unique element in $L^2(\Omega)$ such that :
\begin{equation}
\label{eq:IPPgeneralized}
\E[F \delta(u)] = \int_{\mathbb X}\E[u(x) D_x F] dx, \quad \forall F \in L^2(\Omega).
\end{equation}
By uniqueness, $\delta(u)$ coincides with the It\^o stochastic integral in case $u$ is a predictable process which itself is equal to $I_1(u)$ when $u$ is deterministic. 
Similarly, the iterated divergence of order $j$ denoted $\delta^j$ can be defined as the dual operator of the $j$th Malliavin derivative $D^j$. In case of a deterministic element $g_j$, $I_j(g_j) = \delta^j(g_j)$. To see this, note that for any $j \geq 2$ and any $g \in L^2_s(\mathbb X^j)$ it holds that
$$ I_j(g) = \delta(I_{j-1}(g(\cdot,\bullet))), \quad \cdot \in \mathbb X, \; \bullet \in \mathbb X^{j-1}, \; \textrm{ in } L^2(\Omega).$$
Using then the Malliavin integration by parts formula (\ref{eq:IPPgeneralized}) one gets the result by induction.
\end{proof}
\noindent
We conclude this section with the well-known relation between the Malliavin derivatives and the iterated integrals that can be found for example in \cite{Last2016,Privault_2009}.

\begin{prop}
\label{prop:derinIj}
\begin{itemize}
\item[(i)]
Let $j\in \mathbb N^*$, $k\in \mathbb N^*$, with $k\leq j$ and $h \in L^2(\mathbb X)$. Then : 
$$ D_{(x_1,\cdots,x_k)}^k I_j(h^{\otimes j}) = \frac{j!}{(j-k)!} I_{j-k}(h^{\otimes (j-k)}) \prod_{i=1}^k h(x_i), \quad \forall (x_1,\cdots,x_k) \in \mathbb X^k.$$
\item[(ii)] In addition, for $k>j$,
$$ D_{(x_1,\cdots,x_k)}^k I_j(h^{\otimes j}) =0, \quad \forall (x_1,\cdots,x_k) \in \mathbb X^k. $$
\end{itemize}
\end{prop}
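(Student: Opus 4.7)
The plan is to prove part (i) by induction on $k$ (for fixed $j$) and then deduce part (ii) as an easy corollary of the $k=j$ case. The only non-trivial step is the base case $k=1$; the inductive step is essentially mechanical because $D_x$ commutes with multiplication by deterministic functions of already-chosen variables.

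For the base case $k=1$, I use the definition $D_x F = F\circ \varepsilon_x^+ - F$. Under the shift $\varepsilon_x^+$, the Poisson measure $N(\omega)$ becomes $N(\omega)+\delta_x$, so the compensated measure shifts as $(N(dy)-dy)\circ\varepsilon_x^+ = (N(dy)-dy) + \delta_x(dy)$. Substituting into $I_j(h^{\otimes j})$ and expanding the product of $j$ such factors gives a sum over subsets $J\subset\{1,\ldots,j\}$ indicating which copies of $\delta_x$ are chosen. The crucial observation is that the outer integral is over $\Delta_j$ (pairwise distinct coordinates), so any term with $|J|\geq 2$ forces two of the integration variables to coincide with $x$ and therefore vanishes. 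Only $|J|=0$ and $|J|=1$ survive. The $|J|=0$ term is exactly $I_j(h^{\otimes j})$, which cancels in the difference. The $j$ terms with $|J|=1$ are all equal by symmetry of $h^{\otimes j}$, each producing $h(x)$ times the iterated integral on the remaining $j-1$ variables over $\Delta_{j-1}$ (after the diagonal $\{x_i=x\}$ is discarded, which is Lebesgue-null and $\P$-almost-surely absent from the atoms of $N$). One obtains
\[
D_x I_j(h^{\otimes j}) = j\, h(x)\, I_{j-1}(h^{\otimes(j-1)}), \quad \P\text{-a.s.}
\]

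The inductive step is then immediate: assuming the formula at order $k-1$,
\[
D^{k}_{(x_1,\ldots,x_k)} I_j(h^{\otimes j}) = D_{x_k}\!\left[\frac{j!}{(j-k+1)!}\,I_{j-k+1}(h^{\otimes(j-k+1)})\prod_{i=1}^{k-1} h(x_i)\right],
\]
and since $\prod_{i=1}^{k-1} h(x_i)$ is deterministic (hence invariant under $\varepsilon_{x_k}^+$), one pulls it out of $D_{x_k}$ and applies the base case to $I_{j-k+1}(h^{\otimes(j-k+1)})$, producing the factor $(j-k+1)h(x_k)I_{j-k}(h^{\otimes(j-k)})$. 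The prefactors combine into $\frac{j!}{(j-k)!}$, giving the claim for $k$. For part (ii), once $k=j$, the formula yields $D^{j}_{(x_1,\ldots,x_j)} I_j(h^{\otimes j}) = j! \prod_{i=1}^{j} h(x_i) \cdot I_0(1) = j! \prod_{i=1}^j h(x_i)$, which is deterministic; any further application of $D_{x_{j+1}}$ vanishes, and since $D^k = D\circ D^{k-1}$, the result is $0$ for all $k>j$.

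The step I expect to need the most care is the vanishing of $|J|\geq 2$ contributions in the base case: it uses in an essential way that the iterated integral in Definition \ref{definition:interated} is set up on $\Delta_j$ rather than on all of $\mathbb{X}^j$, and that the product $\prod_{i\in J}\delta_x(dx_i)$ restricted to pairwise distinct $(x_i)_{i\in J}$ is zero as soon as $|J|\geq 2$. One also has to note that the discarded diagonal $\{x_i = x\}$ in the $|J|=1$ term contributes nothing, both because it is Lebesgue-null for the $-dx_i$ piece and because, under $\P$, the canonical Poisson measure $N$ almost surely has no atom at the deterministic point $x$; hence the integration domain may be taken to be $\Delta_{j-1}$ without loss, recovering $I_{j-1}(h^{\otimes(j-1)})$ cleanly.
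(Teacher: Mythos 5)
The paper does not actually prove this proposition: it is stated as a ``well-known relation'' with a pointer to \cite{Last2016,Privault_2009}, so there is no in-paper argument to compare against. Your proof is correct and complete, and it is essentially the standard argument one finds in those references: write $D_x F = F\circ\varepsilon_x^+ - F$, observe that under the shift the compensated measure picks up a $\delta_x$, expand the product, and use the fact that the iterated integral is taken over the off-diagonal set $\Delta_j$ to kill every term carrying two or more copies of $\delta_x$, leaving $j\,h(x)\,I_{j-1}(h^{\otimes(j-1)})$; the induction on $k$ and the deduction of (ii) from the deterministic value $j!\prod_{i=1}^j h(x_i)$ at $k=j$ are then routine. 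You correctly identified the two delicate points (the role of $\Delta_j$, and the harmlessness of the discarded diagonal $\{x_i=x\}$, which is Lebesgue-null and $\P$-a.s.\ atom-free). The only caveat worth recording is that the identity $D_x I_j(h^{\otimes j}) = j\,h(x)\,I_{j-1}(h^{\otimes(j-1)})$ holds outside a $\P$-null set that a priori depends on $x$, so the conclusion should be read for each fixed $(x_1,\ldots,x_k)$ almost surely (or $dx\otimes\P$-a.e.) rather than as a single pathwise identity valid simultaneously for all arguments; this is the same level of precision at which the paper itself states the result, so it is not a gap in your argument.
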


\section{Notion of pseudo-chaotic expansion}
\label{section:pseudochaotic}

We now present an alternative decomposition that we name pseudo-chaotic expansion. Recall that according to Theorem \ref{th:chaoticgeneral}, any $F$ in $L^2(\Omega)$ admits a chaotic expansion as : 
$$ F = \E[F] + \sum_{j=1}^{+\infty} \frac{1}{j!} I_j(f_j^F),$$
with $f_j^F(x_1,\cdots,x_j)=\E\left[D_{(x_1,\cdots,x_j)}^j F\right]$.\\\\
\noindent
For technical reasons we will also consider the same property but for a baseline Poisson measure on a given bounded subset of $\mathbb X$.

\begin{definition}
\label{definition:truncatedstuff}
For $(T,M)\in \mathbb X$ we set : 
\begin{itemize}
\item[(i)] $ \Delta^{T,M}_j:=\left\{(x_1,\cdots,x_j) \in ([0,T]\times[0,M])^j, \; x_i \neq x_k, \; \forall i\neq k \in \{1,\cdots,j\}\right\},$ 
\item[(ii)] $N^{T,M}$ the truncated Poisson measure $N^{T,M}$ defined as : 
$$ N^{T,M}(A):=\int_{A} \ind{[0,T]\times[0,M]}(x) N(dx), \quad A \in \mathcal B(\mathbb X). $$
\item[(iii)] $L^{2,T,M}(\Omega)$ the set of random variables $F$ in $L^2(\Omega)$ such that there exists $(f_j)_{j\geq 1}$ with $f_j \in L_s^2(([0,T]\times[0,M])^j)$ such that 
$$F= \E[F] + \sum_{j=1}^{+\infty} \frac{1}{j!} I_j(f_j),$$
that is the set of random variables admitting the chaotic expansion with $N$ replaced by $N^{T,M}$.
\end{itemize}
\end{definition}

\begin{definition}$[$Pseudo-chaotic expansion$]$
\label{eq:pseudo}
\begin{enumerate}
\item A random variable $F$ in $L^2(\Omega)$ is said to have a pseudo-chaotic expansion with respect to the counting process $N$ if there exists $(g_j)_{j \geq 1}$, $g_j \in L^2_s(\mathbb X^j)$ for all $j\in \mathbb N^*$ such that : 
\begin{equation}
\label{eq:definitionpseudochaotic}
F =\sum_{j=1}^{+\infty} \frac{1}{j!} \int_{\mathbb X^j} g_j(x_1,\ldots,x_j) N(dx_1) \cdots N(dx_j),
\end{equation}
where the series converges in $L^2(\Omega)$. 
\item Fix $(T,M) \in \mathbb X$. A random variable $F$ in $L^{2,T,M}(\Omega)$ is said to have a pseudo-chaotic expansion with respect to the counting process $N$ if there exists $(g_j)_{j\geq 1}$, $g_j \in L^2_s(([0,T]\times[0,M])^j)$ for all $j\in \mathbb N^*$ such that : 
\begin{align}
\label{eq:psueudochaoticTM}
F &=\sum_{j=1}^{+\infty} \frac{1}{j!} \int_{\mathbb X^j} g_j(x_1,\ldots,x_j) N^{T,M}(dx_1) \cdots N^{T,M}(dx_j)\nonumber \\
&=\sum_{j=1}^{+\infty} \frac{1}{j!} \int_{([0,T]\times[0,M])^j} g_j(x_1,\ldots,x_j) N(dx_1) \cdots N(dx_j)
\end{align}
where the series converges in $L^2(\Omega)$.
\end{enumerate}
\end{definition}

\begin{remarks}\mbox{}
\begin{itemize}
\item[-] Recall that with the notations here  above (and Notation \ref{notation:simplex}), the symmetry of functions $g_j$ entails that
\begin{align*}
& \int_{\mathbb X^j} g_j(x_1,\ldots,x_j) N(dx_j) \cdots N(dx_1) \\
&= \int_{\Delta_j} g_j(x_1,\ldots,x_j) N(dx_1) \cdots N(dx_j) \\
&= \int_{\Delta_j} g_j((t_1,\theta_1),\ldots,(t_j,\theta_j)) N(dt_1,d\theta_1) \cdots N(dt_j,d\theta_j) \\
&= j! \int_{\mathbb X} \int_{[0,t_{j-1})\times \real_+} \cdots \int_{[0,t_{2})\times \real_+} g_j((t_1,\theta_1),\ldots,(t_j,\theta_j)) N(dt_1,d\theta_1) \cdots N(dt_j,d\theta_j).
\end{align*} 
\item[-] Note that in each term $\int_{\mathbb X^j} g_j(x_1,\ldots,x_j) N(dx_1) \cdots N(dx_j)$ in (\ref{eq:definitionpseudochaotic}), the multiple integration coincides with the one with respect to the so-called factorial measures presented in \cite[Appendix]{Last2016}.\\
\end{itemize}
\end{remarks}

\begin{definition}
\label{eq:setofpseudo}
We set
$$\mathcal P:=\{F \in L^2(\Omega), \textrm{ which admits a pseudo-chaotic expansion with respect to }N\},$$
and for $(T,M) \in \mathbb X$,
$$\mathcal P^{T,M}:=\{F \in L^2(\Omega), \textrm{ which admits a pseudo-chaotic expansion with respect to }N^{T,M}\}.$$
\end{definition}

\noindent
Before studying and characterizing those random variables which admit a pseudo-chaotic expansion we need some preliminary results collected in the section below.

\subsection{Some preliminary results}

\begin{definitionprop}
\label{definition:truncatedstuffbis}
For $(T,M)\in \mathbb X$.  
\begin{itemize}
\item[(i)] Let the random variable $L^{T,M}$ on $\Omega$,
\begin{equation}
\label{eq:L}
L^{T,M}(\omega) := \exp\left(MT\right) \ind{N([0,T]\times[0,M])(\omega)=0} = \exp\left(MT\right) \ind{\omega([0,T]\times[0,M])=0}, \quad \omega \in \Omega.
\end{equation}
It holds that 
$$ L^{T,M} = 1 +\sum_{j=1}^{+\infty} \frac{1}{j!} I_j((-\ind{[0,T]\times[0,M]})^{\otimes j}),$$
and 
\begin{equation}
\label{eq:derivL}
D_{(x_1,\cdots,x_j)}^j L^{T,M} = L^{T,M} \prod_{i=1}^j (-\ind{[0,T]\times [0,M]}(x_i)), \quad \forall (x_1,\cdots,x_j) \in \mathbb X^j.
\end{equation}
\item[(ii)] $\mathbb Q^{T,M}$ defined on $(\Omega,\mathcal F_T^N)$ as $\frac{ d \mathbb Q^{T,M}}{d\P} =:L^{T,M}$ is a probability measure.\\\\
\noindent
As the support of $\mathbb Q^{T,M}$ is contained in $\{N([0,T]\times[0,M])=0\}$ we name $\mathbb Q^{T,M}$ the vanishing Poisson measure as it brings the intensity of $N$ to $0$ on the rectangle $[0,T]\times[0,M]$ and
$$ \E^{\mathbb Q^{T,M}}[N^{T,M}(A)] = 0, \quad \forall A \in \mathcal B(\mathbb X). $$
\end{itemize}
\end{definitionprop}

\begin{proof}
Set 
$$ L= 1 +\sum_{j=1}^{+\infty} \frac{1}{j!} I_j((-\ind{[0,T]\times[0,M]})^{\otimes j}).  $$
Following \cite[Proposition 6.3.1]{Privault_2009}, this expression is the chaotic expansion of the stochastic exponential at time $T$ of the deterministic function $x\mapsto -\ind{(0,T)\times[0,M]}(x) $ against the compensated Poisson measure, that is 
\begin{align*}
L &= \exp\left(\int_{\mathbb X} -\ind{[0,T]\times [0,M]}(x) \tilde{N}(dx)\right) \prod_{x, \; N(\{x\})=1} \left[(1-\ind{[0,T]\times [0,M]}(x)) \exp\left(\ind{[0,T]\times [0,M]}(x)\right)\right] \\
&= \exp\left(M T \right) \ind{N([0,T]\times[0,M])=0}\\
&= L^{T,M}.
\end{align*}
In addition, by definition of the Malliavin derivative $D$ and the exponential structure of $L$ we have that 
$$D_{x_1} L = -L \ind{[0,T]\times [0,M]}(x_1).$$
Relation (\ref{eq:derivL}) then results from the fact that $-\ind{[0,T]\times [0,M]}$ is deterministic and that $D^j = D^{j-1} D$.\\
\noindent
Finally, as $\E[L^{T,M} = 1]$, the measure $\mathbb Q^{T,M}$ in (ii) is well defined and  is  a probability measure (not equivalent to $\P$).
\end{proof}

\begin{remark}
Our analysis will be based on the intervention of the quantity $L^{T,M}$ or of the Poisson vanishing measure $\mathbb Q^{T,M}$ which is properly defined only on bounded subsets of $\mathbb X$; which explains why we derive our results for the truncated Poisson measures $N^{T,M}$ and not for $N$.
\end{remark}

\begin{prop}
\label{prop:degatsQTM}
Let $(T,M)\in \mathbb X$, $j\in \mathbb N^*$, $f\in L_s^2(([0,T]\times[0,M])^j)$. Let $F$ of the form (\ref{eq:psueudochaoticTM}) (that is $F \in \mathcal P^{T,M}$).
Then 
$$ \E^{\mathbb Q^{T,M}}\left[F\right] = \E\left[L^{T,M} F\right] =0.$$
\end{prop}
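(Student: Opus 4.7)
The plan is to exploit the fact that the support of $\mathbb{Q}^{T,M}$ is contained in the event $\{N([0,T]\times[0,M])=0\}$, on which every iterated integral appearing in the pseudo-chaotic expansion of $F$ vanishes pathwise. Since by Definition-Proposition \ref{definition:truncatedstuffbis} we have $L^{T,M} = e^{MT}\mathbf{1}_{\{N([0,T]\times[0,M])=0\}}$, this should force $L^{T,M}F=0$ almost surely, and hence $\mathbb{E}[L^{T,M}F]=0$.

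First, I would introduce the partial sums
\begin{equation*}
F_n := \sum_{j=1}^{n} \frac{1}{j!}\int_{([0,T]\times[0,M])^j} g_j(x_1,\ldots,x_j)\,N(dx_1)\cdots N(dx_j),
\end{equation*}
which, by the definition of $F \in \mathcal P^{T,M}$, converge to $F$ in $L^2(\Omega)$. The key pathwise observation is then that, for any $\omega \in \Omega$ with $\omega([0,T]\times[0,M])=0$ (that is, $N$ has no atom in the rectangle $[0,T]\times[0,M]$), the multiple integral
\begin{equation*}
\int_{([0,T]\times[0,M])^j} g_j(x_1,\ldots,x_j)\,N(dx_1)\cdots N(dx_j)(\omega)
\end{equation*}
is an empty sum over the atoms of $\omega$ lying in $[0,T]\times[0,M]$, hence equals $0$. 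Consequently $L^{T,M}(\omega)F_n(\omega)=0$ for $\mathbb{P}$-a.e. $\omega$, and so $\mathbb{E}[L^{T,M}F_n]=0$ for every $n\in\mathbb{N}^*$.

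To conclude, I would pass to the limit. Since $L^{T,M}$ is pathwise bounded by $e^{MT}$, it lies in $L^\infty(\Omega)\subset L^2(\Omega)$, and Cauchy--Schwarz yields
\begin{equation*}
\bigl|\mathbb{E}[L^{T,M}(F-F_n)]\bigr| \leq e^{MT}\, \|F-F_n\|_{L^2(\Omega)} \xrightarrow[n\to\infty]{} 0.
\end{equation*}
Combining this with $\mathbb{E}[L^{T,M}F_n]=0$ gives $\mathbb{E}[L^{T,M}F]=0$, and by the very definition of $\mathbb{Q}^{T,M}$ via the Radon--Nikodym density $L^{T,M}$, this is the same as $\mathbb{E}^{\mathbb{Q}^{T,M}}[F]=0$.

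The only mildly delicate point is the interchange between the infinite sum (which converges only in $L^2$) and the indicator $\mathbf{1}_{\{N([0,T]\times[0,M])=0\}}$ inside the expectation; this is precisely why we pass through the truncations $F_n$ and use the boundedness of $L^{T,M}$ to promote the $L^2$-convergence of $F_n$ to $F$ into the convergence of $\mathbb{E}[L^{T,M}F_n]$ to $\mathbb{E}[L^{T,M}F]$. No further technicalities are expected.
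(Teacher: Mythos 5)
Your proof is correct and takes essentially the same route as the paper: the paper's own argument is a one-line observation that $\mathbb{Q}^{T,M}$ is supported on $\{N([0,T]\times[0,M])=0\}$, where every term of the expansion (\ref{eq:psueudochaoticTM}) vanishes pathwise. You simply make explicit the passage to the limit through the partial sums, which the paper leaves implicit.
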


\begin{proof}
The result is an immediate consequence of the definition of $\mathbb Q^{T,M}$ which is supported on the set $N([0,T]\times[0,M])=0$ and of the form of $F$ as a sum (\ref{eq:psueudochaoticTM}) involving only integrals against $N$ on $[0,T]\times[0,M]$.
\end{proof}

\noindent
Our first main result which gives a motivation to the definition of the notion of the pseudo-chaotic expansion lies in the theorem below.

\begin{theorem}
\label{th:countingexpectation}
Let $T,M>0$ and $F$ in $L^{2,T,M}(\Omega)$ (see (ii) of Definition \ref{definition:truncatedstuff}). 
Then
$$ \sum_{j=1}^{+\infty} \frac{(-1)^j}{j!} \int_{\mathbb X^j} \E\left[D_{(x_1,\cdots,x_j)}^j F\right] dx_1 \cdots dx_j 
=\E\left[F(L^{T,M}-1)\right].$$
\end{theorem}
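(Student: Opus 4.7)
The plan is to combine the chaotic expansion of $L^{T,M}$ provided by Definition-Proposition \ref{definition:truncatedstuffbis}(i) with the chaotic expansion of $F$ granted by $F \in L^{2,T,M}(\Omega)$, and then to evaluate $\E[F(L^{T,M}-1)]$ via the $L^2(\Omega)$-orthogonality of iterated integrals (the isometry underlying Theorem \ref{th:chaoticgeneral}). An equivalent, slightly cleaner route is to apply the Malliavin duality formula of Theorem \ref{theorem:generalizedIPP} directly to each chaos component of $L^{T,M}-1$ paired with $F$.

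First I would write
$$F = \E[F] + \sum_{j=1}^{+\infty} \frac{1}{j!} I_j(f_j^F), \qquad L^{T,M}-1 = \sum_{j=1}^{+\infty}\frac{1}{j!} I_j\bigl((-\ind{[0,T]\times[0,M]})^{\otimes j}\bigr),$$
where $f_j^F(x_1,\ldots,x_j) = \E[D^j_{(x_1,\ldots,x_j)} F]$ is supported in $([0,T]\times[0,M])^j$: this last fact follows from the definition of $L^{2,T,M}(\Omega)$ together with the uniqueness part of Theorem \ref{th:chaoticgeneral}. Since $\E[L^{T,M}]=1$, the constant term $\E[F]\cdot\E[L^{T,M}-1]$ vanishes, and the orthogonality relation $\E[I_j(f)I_k(g)] = \delta_{j,k}\, j!\,\langle f,g\rangle_{L^2(\mathbb X^j)}$ then produces
$$\E[F(L^{T,M}-1)] = \sum_{j=1}^{+\infty} \frac{1}{j!} \bigl\langle f_j^F,\; (-\ind{[0,T]\times[0,M]})^{\otimes j}\bigr\rangle_{L^2(\mathbb X^j)}.$$
Expanding this inner product gives
$$\bigl\langle f_j^F,\; (-\ind{[0,T]\times[0,M]})^{\otimes j}\bigr\rangle_{L^2(\mathbb X^j)} = (-1)^j \int_{([0,T]\times[0,M])^j} \E\bigl[D^j_{(x_1,\ldots,x_j)} F\bigr]\, dx_1\cdots dx_j,$$
and because $\E[D^j F]=f_j^F$ vanishes off $([0,T]\times[0,M])^j$ the integration domain extends costlessly to $\mathbb X^j$, yielding the claimed identity after summation.

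The only delicate point is justifying the termwise pairing in the second step. I would handle it by truncating the chaotic series of $L^{T,M}-1$ at level $n$, applying Cauchy--Schwarz to exchange $\E[F\,\cdot]$ with the resulting finite sum, and then letting $n\to\infty$ using that the partial sums converge in $L^2(\Omega)$ to $L^{T,M}-1$ (legitimate since $\E[(L^{T,M})^2]=e^{MT}<+\infty$) and that $F\in L^2(\Omega)$. Alternatively, bypassing the orthogonality altogether, Theorem \ref{theorem:generalizedIPP} applied with $g_j=(-\ind{[0,T]\times[0,M]})^{\otimes j}$ gives directly
$$\E\bigl[F\, I_j((-\ind{[0,T]\times[0,M]})^{\otimes j})\bigr] = (-1)^j\int_{([0,T]\times[0,M])^j}\E[D^j_{(x_1,\ldots,x_j)} F]\,dx_1\cdots dx_j$$
(after a Fubini step, guaranteed by the supports and by $F\in L^2$), and the same truncation-plus-limit argument over $j$ closes the proof. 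In both strategies the technical core is identifying the dominating object in $L^2(\Omega)$ that allows one to pass the expectation through the infinite series, which the explicit bound $\E[(L^{T,M})^2]=e^{MT}$ provides effortlessly.
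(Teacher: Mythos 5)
Your proposal is correct and, in its second formulation (pairing $F$ with each $I_j((-\ind{[0,T]\times[0,M]})^{\otimes j})$ via the duality formula of Theorem \ref{theorem:generalizedIPP} and resumming to recognize $L^{T,M}-1$), is essentially identical to the paper's proof; the first formulation via chaos orthogonality is an equivalent repackaging of the same computation. You are in fact somewhat more careful than the paper on two minor points — justifying that $\E[D^jF]$ is supported in $([0,T]\times[0,M])^j$ via the uniqueness in Theorem \ref{th:chaoticgeneral}, and justifying the interchange of $\E$ with the infinite sum using $\E[(L^{T,M})^2]=e^{MT}<+\infty$ — both of which the paper passes over silently.
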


\begin{proof}
Using the Malliavin integration by parts it holds that : 
\begin{align*}
&\sum_{j=1}^{+\infty} \frac{(-1)^j}{j!} \int_{\mathbb X^j} \E\left[D_{(x_1,\cdots,x_j)}^j F\right] dx_1 \cdots dx_j \\
&\sum_{j=1}^{+\infty} \frac{(-1)^j}{j!} \int_{([0,T\times [0,M])^j} \E\left[D_{(x_1,\cdots,x_j)}^j F\right] dx_1 \cdots dx_j \\
&=\sum_{j=1}^{+\infty} \frac{(-1)^j}{j!} \E\left[ \int_{([0,T\times [0,M])^j} D_{(x_1,\cdots,x_j)}^j F \; dx_1 \cdots dx_j\right]\\
&=\sum_{j=1}^{+\infty} \frac{(-1)^j}{j!} \E\left[ \int_{\mathbb X^j} \prod_{i=1}^j \ind{[0,T]\times [0,M]}(x_i) D_{(x_1,\cdots,x_j)}^j F \; dx_1 \cdots dx_j\right]\\
&=\sum_{j=1}^{+\infty} \frac{1}{j!} \E\left[\int_{\mathbb X^j} \prod_{i=1}^j (-\ind{[0,T]\times [0,M]}(x_i)) D_{(x_1,\cdots,x_j)}^j F dx_1 \cdots dx_j\right]\\
&=\sum_{j=1}^{+\infty} \frac{1}{j!} \E\left[\int_{\mathbb X^j} (-\ind{[0,T]\times [0,M]})^{\otimes j})(x_1,\cdots,x_j) D_{(x_1,\cdots,x_j)}^j F dx_1 \cdots dx_j\right]\\
&=\sum_{j=1}^{+\infty} \frac{1}{j!} \E\left[F \; I_j\left((-\ind{[0,T]\times [0,M]})^{\otimes j}\right)\right], \quad \textrm{ by Theorem \ref{theorem:generalizedIPP}}\\
&=\E\left[F (L^{T,M}-1)\right].
\end{align*}
\end{proof}

\noindent The previous result will find interest for instance when $F=H_T$ for $H$ a counting process and $T>0$. We make precise the definition of such processes. 

\begin{definition}$[$Counting process with bounded intensity by Poisson imbedding$]$\\
Let $\lambda$ be a $\mathcal F^N$-predictable process such that 
$$\exists M>0, \textrm{ such that }\; \lambda_t \leq M, \quad \forall t\geq 0, \; \P-a.s..$$ 
The process $H$ defined below is a counting process with intensity $\lambda$ :
\begin{equation}
\label{eq:counting}
H_t = \int_{(0,t]\times \real_+} \ind{\theta \leq \lambda_s} N(ds,d\theta)
= \int_{(0,t]\times [0,M]} \ind{\theta \leq \lambda_s} N(ds,d\theta), \quad t\geq 0.
\end{equation}
\\\\\noindent
Using the chaotic expansion we have for any $T>0$ that : 
\begin{equation}
\label{eq:chaostemp}
H_T = \E[H_T] + \sum_{j\geq 1} \frac{1}{j!} I_j(f_j^{H_T}), \quad \textrm{ with } \quad  f_j^{H_T}(x_1,\cdots,x_j) = \E\left[D_{(x_1,\cdots,x_j)}^j H_T\right], \; x_i \in [0,T] \times [0,M].
\end{equation}
\end{definition}

\noindent
We now apply Theorem \ref{th:countingexpectation} to a counting process. 

\begin{corollary}
\label{cor:expectcounting}
Let $T>0$, $H$ a counting process with bounded intensity $\lambda$ by some $M>0$ so that $H_T$ is given by (\ref{eq:counting}). Then 
\begin{align*}
&\sum_{j=1}^{+\infty} \frac{(-1)^j}{j!} \int_{\mathbb X^j} f_j^{H_T}(x_1,\cdots,x_j) dx_1 \cdots dx_j \\
&= \sum_{j=1}^{+\infty} \frac{(-1)^j}{j!} \int_{\mathbb X^j} \E\left[D_{(x_1,\cdots,x_j)}^j H_T\right] dx_1 \cdots dx_j \\
&=-\E[H_T].
\end{align*}
\end{corollary}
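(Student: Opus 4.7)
The plan is to apply Theorem \ref{th:countingexpectation} with $F := H_T$ and then exploit the specific structure of the counting process together with the support of $L^{T,M}$ to simplify the right-hand side. Two things must be checked first. On the one hand, since $\lambda_s \leq M$ for all $s \geq 0$, the bound $H_T \leq N((0,T]\times[0,M])$ holds pathwise, and the latter is a Poisson variable with parameter $MT$, hence $H_T \in L^2(\Omega)$. On the other hand, inspecting the Malliavin derivatives via Proposition \ref{prop:Dnaltenative}, adding a point $(t,\theta)$ with either $t > T$ or $\theta > M$ modifies neither $\lambda_s$ for $s \leq T$ (by predictability, in the first case) nor the indicator $\ind{\theta \leq \lambda_s}$ (in the second case, since $\lambda_s \leq M$). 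Hence the chaotic coefficients $f_j^{H_T}$ are supported on $([0,T]\times[0,M])^j$, so $H_T$ indeed belongs to $L^{2,T,M}(\Omega)$ and Theorem \ref{th:countingexpectation} applies, yielding
\[
\sum_{j=1}^{+\infty} \frac{(-1)^j}{j!} \int_{\mathbb X^j} \E\left[D^j_{(x_1,\ldots,x_j)} H_T\right] dx_1 \cdots dx_j = \E\left[H_T (L^{T,M}-1)\right].
\]

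The key step is then to argue that $\E[H_T \, L^{T,M}] = 0$. By the defining formula (\ref{eq:L}), $L^{T,M}$ vanishes off the event $A := \{N([0,T]\times[0,M]) = 0\}$. But on $A$, the Poisson measure $N$ carries no mass on $(0,T]\times[0,M]$, and since $\lambda_s \leq M$ forces the integrand $\ind{\theta \leq \lambda_s}$ to be supported on $[0,M]$ in the $\theta$-variable, the representation (\ref{eq:counting}) gives $H_T = 0$ on $A$. Hence $H_T \, L^{T,M} = 0$ almost surely, and therefore $\E[H_T (L^{T,M}-1)] = -\E[H_T]$, which is the desired identity.

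There is no real obstacle: the only subtlety is justifying membership in $L^{2,T,M}(\Omega)$, which follows cleanly from the boundedness of $\lambda$ and the predictability of the intensity, and the pathwise identity $H_T \, L^{T,M} = 0$, which is the heart of the argument.
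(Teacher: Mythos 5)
Your proposal is correct and follows essentially the same route as the paper: apply Theorem \ref{th:countingexpectation} to $F=H_T$ and then observe that $\E[H_T L^{T,M}]=\exp(MT)\,\E[H_T \ind{N([0,T]\times[0,M])=0}]=0$ because $H_T$ vanishes pathwise on the event $\{N([0,T]\times[0,M])=0\}$. Your explicit verification that $H_T\in L^{2,T,M}(\Omega)$ and the direct pathwise argument for $H_T L^{T,M}=0$ are in fact slightly more careful than the paper's proof, which invokes Proposition \ref{prop:degatsQTM} (stated for elements already known to lie in $\mathcal P^{T,M}$) but ultimately writes down the same identity you do.
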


\begin{proof}
By Theorem \ref{th:countingexpectation},
\begin{align*}
&\sum_{j=1}^{+\infty} \frac{(-1)^j}{j!} \int_{([0,T]\times[0,M])^j} f_j^{H_T}(x_1,\cdots,x_j) dx_1 \cdots dx_j \\
&= \sum_{j=1}^{+\infty}  \frac{(-1)^j}{j!}  \int_{([0,T]\times[0,M])^j} \E\left[D_{(x_1,\cdots,x_j)}^j H_T\right] dx_1 \cdots dx_j \\
&=\E\left[H_T (L^{T,M}-1)\right].
\end{align*}
Proposition \ref{prop:degatsQTM} entails then that
$$\E\left[H_T L^{T,M}\right]=\exp(MT) \E\left[H_T \ind{N([0,T]\times[0,M])=0}\right]=0$$
which concludes the proof.
\end{proof}

\begin{remark}
\label{rk:core}
The previous result is at the core of our analysis. This means that for a counting process with bounded intensity, the only term with only Lebesgue integrals ($dx$) in Expansion (\ref{eq:chaostemp}) vanishes with $\E[H_T]$. In other words, all the terms in Expansion (\ref{eq:chaostemp}) involves at least on integral against $N$.
\end{remark}

\noindent
We conclude this section with  a generalized version of Theorem \ref{th:countingexpectation}.

\begin{lemma}
\label{lemma:technicalsumderivatives}
Fix $(T,M) \in \mathbb X$ and $F$ in $L^{2,T,M}(\Omega)$. For any $k\in \mathbb N^*$ and any $(x_1,\cdots,x_k) \in ([0,T]\times[0,M])^k$, it holds that :
\begin{align*}
&\E\left[D_{(x_1,\ldots,x_k)}^k F\right] + \sum_{j=k+1}^{+\infty} \frac{(-1)^{j-k}}{(j-k)!} \int_{([0,T]\times[0,M])^{j-k}} \E\left[D_{(x_1,\ldots,x_k,x_{k+1},\ldots,x_{j})}^j F\right] dx_j \cdots dx_{k+1} \\
&= \E\left[ D_{(x_1,\ldots,x_k)}^k F L^{T,M} \right].
\end{align*}
\end{lemma}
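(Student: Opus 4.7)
The plan is to apply Theorem \ref{th:countingexpectation} not to $F$ itself but to the shifted variable
$$G := D^k_{(x_1,\ldots,x_k)} F,$$
with $(x_1,\ldots,x_k) \in ([0,T]\times[0,M])^k$ treated as a fixed parameter. The target identity then reads exactly Theorem \ref{th:countingexpectation} applied to $G$, once one recognises that iterated Malliavin derivatives commute and that $\E[D^k_{(x_1,\ldots,x_k)} F]$ is the expectation of $G$.

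First I would verify that $G$ belongs to $L^{2,T,M}(\Omega)$ and identify its chaotic expansion. Starting from $F = \E[F] + \sum_{j\geq 1} \frac{1}{j!} I_j(f_j)$ with $f_j \in L^2_s(([0,T]\times[0,M])^j)$, the general version of Proposition \ref{prop:derinIj} (extended from $h^{\otimes j}$ to arbitrary symmetric $f_j$, which is standard and, e.g., appears in the cited references \cite{Last2016,Privault_2009}) gives
$$D^k_{(x_1,\ldots,x_k)} I_j(f_j) = \frac{j!}{(j-k)!}\, I_{j-k}\bigl(f_j(x_1,\ldots,x_k,\cdot)\bigr) \quad \text{for } j \geq k,$$
and zero for $j<k$. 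Hence
$$G = f_k(x_1,\ldots,x_k) + \sum_{j=k+1}^{+\infty} \frac{1}{(j-k)!}\, I_{j-k}\bigl(f_j(x_1,\ldots,x_k,\cdot)\bigr),$$
which shows $G \in L^{2,T,M}(\Omega)$ with $\E[G] = f_k(x_1,\ldots,x_k) = \E[D^k_{(x_1,\ldots,x_k)} F]$ (the constant term in the chaos).

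Next I apply Theorem \ref{th:countingexpectation} to $G$: for $m \geq 1$,
$$\sum_{m=1}^{+\infty} \frac{(-1)^m}{m!} \int_{\mathbb X^m} \E\bigl[D^m_{(y_1,\ldots,y_m)} G\bigr]\, dy_1\cdots dy_m = \E\bigl[G(L^{T,M}-1)\bigr].$$
The add-point operators in Definition \ref{definitin:shifts} clearly commute, hence so do the Malliavin derivatives of Definition \ref{definition:Dn}, so that
$$D^m_{(y_1,\ldots,y_m)} G = D^m_{(y_1,\ldots,y_m)} D^k_{(x_1,\ldots,x_k)} F = D^{k+m}_{(x_1,\ldots,x_k,y_1,\ldots,y_m)} F.$$
Relabelling $j := k+m$ and renaming $y_i =: x_{k+i}$, the left-hand side becomes
$$\sum_{j=k+1}^{+\infty} \frac{(-1)^{j-k}}{(j-k)!} \int_{([0,T]\times[0,M])^{j-k}} \E\bigl[D^j_{(x_1,\ldots,x_j)} F\bigr]\, dx_{k+1}\cdots dx_j.$$
The right-hand side rewrites as $\E[G L^{T,M}] - \E[G] = \E[D^k_{(x_1,\ldots,x_k)} F \cdot L^{T,M}] - \E[D^k_{(x_1,\ldots,x_k)} F]$, and transposing $\E[D^k_{(x_1,\ldots,x_k)} F]$ to the left yields the claim.

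The main (minor) obstacle is the bookkeeping at Step~1: justifying that $G$ genuinely lies in $L^{2,T,M}(\Omega)$ for a.e.\ choice of $(x_1,\ldots,x_k)$ and that its chaos kernels are the sections $f_j(x_1,\ldots,x_k,\cdot)$. This in turn requires the extension of Proposition \ref{prop:derinIj} from tensor-power kernels $h^{\otimes j}$ to general symmetric kernels; this extension is classical (cf.\ \cite{Last2016}) and can be obtained either by density of tensor products in $L^2_s(\mathbb X^j)$ and the $L^2$-continuity of the add-point operator on each chaos, or directly from Proposition \ref{prop:Dnaltenative} combined with the product formula for iterated integrals. Everything else is a direct transcription of Theorem \ref{th:countingexpectation}.
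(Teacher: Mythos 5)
Your proof is correct, and it organizes the argument differently from the paper: you reduce the lemma to Theorem \ref{th:countingexpectation} applied to the shifted variable $G=D^k_{(x_1,\ldots,x_k)}F$, whereas the paper inlines the corresponding computation. Concretely, the paper writes $D^j=D^{j-k}D^k$, applies the duality formula of Theorem \ref{theorem:generalizedIPP} to $D^k_{(x_1,\ldots,x_k)}F$ so that each integral becomes $\E\bigl[I_{j-k}\bigl((-\ind{[0,T]\times[0,M]})^{\otimes (j-k)}\bigr)\,D^k_{(x_1,\ldots,x_k)}F\bigr]$, and then reassembles $L^{T,M}$ by hand, recognizing the series as $(-1)^k D^k_{(x_1,\ldots,x_k)}$ applied to the chaotic expansion of $L^{T,M}$ via Proposition \ref{prop:derinIj} and the formula (\ref{eq:derivL}), with a $(-1)^{k+1}$ correction term that cancels at the end. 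Your route replaces that reassembly by the commutation $D^m_{(y_1,\ldots,y_m)}D^k_{(x_1,\ldots,x_k)}=D^{k+m}_{(x_1,\ldots,x_k,y_1,\ldots,y_m)}$ and the relabelling $j=k+m$, which is cleaner; the price is precisely the step you flag, namely verifying $G\in L^{2,T,M}(\Omega)$ and identifying its chaos kernels as the sections $f_{k+m}(x_1,\ldots,x_k,\cdot)$, which requires the extension of Proposition \ref{prop:derinIj} from tensor powers to general symmetric kernels. That extension is indeed classical, and the paper sidesteps it because its direct computation only needs $D^k_{(x_1,\ldots,x_k)}F\in L^2(\Omega)$ for the duality formula to apply, not the full expansion of $G$. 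Both arguments are at the same level of rigour concerning the interchange of expectation and infinite summation (neither fully justifies it), so modulo that shared caveat your proof is sound and arguably more modular.
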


\begin{proof}
The property $D^j = D^{j-k} D^k$ (for the first equality) and Theorem \ref{theorem:generalizedIPP}  applied to $D_{(x_1,\ldots,x_k)}^k F$ (for the second equality) imply
\begin{align*}
& \E\left[D_{(x_1,\ldots,x_k)}^k F\right] + \sum_{j=k+1}^{+\infty} \frac{(-1)^{j-k}}{j!} \frac{j!}{(j-k)!} \int_{([0,T]\times[0,M])^{j-k}} \E\left[D_{(x_1,\ldots,x_k,x_{k+1},\ldots,x_{j})}^j F\right] dx_j \cdots dx_{k+1} \\
&= \E\left[D_{(x_1,\ldots,x_k)}^k F\right] + \sum_{j=k+1}^{+\infty} \frac{(-1)^{j-k}}{j!} \frac{j!}{(j-k)!} \int_{([0,T]\times[0,M])^{j-k}} \E\left[D_{(x_{k+1},\ldots,x_{j})}^{j-k} D_{(x_1,\ldots,x_k)}^k F\right] dx_j.\\
&= \E\left[D_{(x_1,\ldots,x_k)}^k F\right] + \sum_{j=k+1}^{+\infty} \frac{1}{j!} \frac{j!}{(j-k)!} \E\left[I_{j-k}\left((-\ind{[0,T]\times [0,M]})^{\otimes (j-k)}\right) D_{(x_1,\ldots,x_k)}^k F\right] \\
&= \E\left[D_{(x_1,\ldots,x_k)}^k F\right] + \E\left[ \sum_{j=k+1}^{+\infty} \frac{1}{j!} \frac{j!}{(j-k)!} I_{j-k}\left((-\ind{[0,T]\times [0,M]})^{\otimes (j-k)}\right) D_{(x_1,\ldots,x_k)}^k F\right].
\end{align*}
Then (i)  of Proposition \ref{prop:derinIj} entails that 
\begin{align*}
& \E\left[D_{(x_1,\ldots,x_k)}^k F\right] + \sum_{j=k+1}^{+\infty} \frac{(-1)^{j-k}}{j!} \frac{j!}{(j-k)!} \int_{([0,T]\times[0,M])^{j-k}} \E\left[D_{(x_1,\ldots,x_k,x_{k+1},\ldots,x_{j})}^j F\right] dx_j \cdots dx_{k+1} \\
&= \E\left[D_{(x_1,\ldots,x_k)}^k F\right] + \E\left[ \sum_{j=k+1}^{+\infty} \frac{1}{j!} (-1)^k \left(D_{(x_1,\ldots,x_k)}^k I_{j}\left((-\ind{[0,T]\times [0,M]})^{\otimes j}\right)\right)D_{(x_1,\ldots,x_k)}^k F\right] \\
&=\E\left[D_{(x_1,\ldots,x_k)}^k F\right] + (-1)^k \E\left[ D_{(x_1,\ldots,x_k)}^k F \left(D_{(x_1,\ldots,x_k)}^k \sum_{j=k+1}^{+\infty} \frac{1}{j!} I_{j}\left((-\ind{[0,T]\times [0,M]})^{\otimes j}\right) \right)\right] \\
&= \E\left[D_{(x_1,\ldots,x_k)}^k F\right] + (-1)^k\E\left[ D_{(x_1,\ldots,x_k)}^k F \left(D_{(x_1,\ldots,x_k)}^k \left(L^{T,M}-1-\sum_{j=1}^k \frac{1}{j!} I_{j}\left((-\ind{[0,T]\times [0,M]})^{\otimes j}\right) \right)\right)\right],
\end{align*}
where we have used Proposition-Definition \ref{definition:truncatedstuffbis}. Thus (i) and (ii) of Proposition \ref{prop:derinIj} give
\begin{align*}
&\E\left[D_{(x_1,\ldots,x_k)}^k F\right] + \sum_{j=k+1}^{+\infty} \frac{(-1)^{j-k}}{j!} \frac{j!}{(j-k)!} \int_{([0,T]\times[0,M])^{j-k}} \E\left[D_{(x_1,\ldots,x_k,x_{k+1},\ldots,x_{j})}^j F\right] dx_j \cdots dx_{k+1} \\
&= \E\left[D_{(x_1,\ldots,x_k)}^k F\right] + (-1)^k \E\left[ D_{(x_1,\ldots,x_k)}^k F \left(D_{(x_1,\ldots,x_k)}^k L^{T,M} +(-1)^{k+1}\right)\right] \\
&= \E\left[D_{(x_1,\ldots,x_k)}^k F\right] + (-1)^k \E\left[ D_{(x_1,\ldots,x_k)}^k F \left((-1)^{k} L^{T,M} +(-1)^{k+1}\right)\right] \\
&= \E\left[D_{(x_1,\ldots,x_k)}^k F\right] + \E\left[ D_{(x_1,\ldots,x_k)}^k F \left(L^{T,M} -1 \right)\right] \\
&= \E\left[L^{T,M} D_{(x_1,\ldots,x_k)}^k F \right].
\end{align*}
\end{proof}

\subsection{Characterization of $\mathcal P^{T,M}$}

Throughout this section we fix $(T,M)$ in $\mathbb X$. Corollary \ref{cor:expectcounting} suggests  that random variables of the form $H_T$ with $H$ a counting process satisfy the pseudo-chaotic expansion. We make precise this point and characterize the set $\mathcal P^{T,M}$.

\begin{theorem}
\label{th:characpseudo}
An element $F$in $L^2(\Omega)$ admits a pseudo-chaotic expansion with respect to the counting process $N^{T,M}$ (that is $F \in \mathcal P^{T,M}$) if and only if
\begin{equation}
\label{eq:expectationsumagain}
\E[F] = \sum_{j=1}^{+\infty} \frac{ (-1)^{j+1}}{j!} \int_{([0,T]\times[0,M])^j} \E\left[D_{(x_1,\cdots,x_j)}^j F\right] dx_1 \cdots dx_j.
\end{equation}
In that case the pseudo-chaotic expansion of $F$ is given by 
\begin{equation}
\label{eq:pseudodecompositionck}
F = \sum_{k=1}^{+\infty} \int_{([0,T]\times[0,M])^k} \frac{1}{k!} c_k(x_1,\ldots,x_k) N(dx_1) \cdots N(dx_k),
\end{equation}
with 
\begin{equation}
\label{eq:ck}
c_k(x_1,\ldots,x_k) := \E\left[ L^{T,M} \, D_{(x_1,\ldots,x_k)}^k F \right] = \E^{\mathbb{Q}^{T,M}}\left[D_{(x_1,\ldots,x_k)}^k F \right], \quad \forall (x_1,\ldots,x_k)\in ([0,T]\times[0,M])^k.
\end{equation}
\end{theorem}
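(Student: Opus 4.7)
The plan is to prove the equivalence in three steps: necessity of the expectation identity, uniqueness of the pseudo-chaotic coefficients, and sufficiency. For the necessary direction, I assume $F \in \mathcal P^{T,M}$. Proposition \ref{prop:degatsQTM} yields $\E[L^{T,M} F] = 0$, i.e.\ $\E[F(L^{T,M}-1)] = -\E[F]$. But Theorem \ref{th:countingexpectation} identifies this quantity with $\sum_{j\geq 1} \frac{(-1)^j}{j!} \int_{\mathbb X^j} \E[D^j_{(x_1,\ldots,x_j)}F] dx_1\cdots dx_j$. Rearranging the sign gives precisely (\ref{eq:expectationsumagain}).

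For the coefficient identification and uniqueness, I start from any pseudo-chaotic representation $F = \sum_{j\geq 1} \frac{1}{j!}\int_{\Delta_j} g_j\,N^{\otimes j}$ and compute $D^k F$ by iterating the add-point operator. A direct computation shows that, for a symmetric $h$ on $\Delta_j$, $D_y \int_{\Delta_j} h\,N^{\otimes j} = j \int_{\Delta_{j-1}} h(y,\cdot)\,N^{\otimes(j-1)}$; iterating this identity $k$ times and collapsing the combinatorial factors gives
\begin{equation*}
D^k_{(y_1,\ldots,y_k)} F = g_k(y_1,\ldots,y_k) + \sum_{j\geq 1} \frac{1}{j!} \int_{\Delta_j} g_{k+j}(y_1,\ldots,y_k,\cdot)\,N^{\otimes j}.
\end{equation*}
Taking expectation under $\mathbb{Q}^{T,M}$, under which $N^{T,M}$ is identically zero, annihilates every $j \geq 1$ term, leaving $g_k(y_1,\ldots,y_k) = \E^{\mathbb{Q}^{T,M}}[D^k_{(y_1,\ldots,y_k)} F] = \E[L^{T,M} D^k_{(y_1,\ldots,y_k)} F] = c_k(y_1,\ldots,y_k)$. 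This pins down the coefficients uniquely.

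For sufficiency, I assume (\ref{eq:expectationsumagain}) and start from the classical chaotic expansion $F = \E[F] + \sum_{j\geq 1} \frac{1}{j!} I_j(f_j^F)$ from Theorem \ref{th:chaoticgeneral}, with $f_j^F(x_1,\ldots,x_j) = \E[D^j_{(x_1,\ldots,x_j)}F]$. Expanding $\prod_{i=1}^j (N(dx_i)-dx_i)$ by the binomial formula and exploiting the symmetry of $f_j^F$ (together with the Lebesgue-negligibility of the diagonal) yields
\begin{equation*}
I_j(f_j^F) = \sum_{k=0}^{j} \binom{j}{k}(-1)^{j-k} \int_{\Delta_k} \!\!\left( \int_{\mathbb X^{j-k}} f_j^F(x_1,\ldots,x_k,y_1,\ldots,y_{j-k})\,dy_1\cdots dy_{j-k}\right) N(dx_1)\cdots N(dx_k).
\end{equation*}
Plugging this into the chaotic expansion and swapping the summations over $j$ and $k$, the $k=0$ contribution coincides precisely with $\E[F] + \sum_{j\geq 1}\frac{(-1)^j}{j!}\int_{\mathbb X^j}\E[D^j F]\,dx$, which vanishes by hypothesis (\ref{eq:expectationsumagain}). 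For each $k \geq 1$, the inner sum $\sum_{j\geq k} \frac{(-1)^{j-k}}{k!(j-k)!} \int_{\mathbb X^{j-k}} \E[D^j_{(x_1,\ldots,x_k,y_1,\ldots,y_{j-k})} F]\,dy$ is exactly $\frac{1}{k!}$ times the left-hand side of Lemma \ref{lemma:technicalsumderivatives} applied at $(x_1,\ldots,x_k)$, hence equal to $\frac{1}{k!}\E[L^{T,M}D^k_{(x_1,\ldots,x_k)}F] = \frac{c_k(x_1,\ldots,x_k)}{k!}$. This produces (\ref{eq:pseudodecompositionck}).

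The main obstacle I anticipate is the rigorous justification of the interchange of summations and of the commutation of $D^k$ with the pseudo-chaotic series. Restriction to the bounded rectangle $[0,T]\times[0,M]$ is essential here: the class $L^{2,T,M}(\Omega)$ guarantees the $L^2$-summability $\sum_{j\geq 1} \frac{1}{j!}\|f_j^F\|_{L^2(([0,T]\times[0,M])^j)}^2 <+\infty$, and an analogous isometry bound for the mixed Poisson-Lebesgue integrals on this finite window (derived from the classical Poisson isometry together with Cauchy-Schwarz on the $j-k$ Lebesgue factors) controls the rearranged series. The combinatorial bookkeeping $\binom{j}{k}/j! = 1/(k!(j-k)!)$ is routine, but must be traced carefully so that the single constant $\E[F]$ at the front of the chaos expansion cancels cleanly with the $k=0$ stratum.
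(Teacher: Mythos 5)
Your proposal is correct and follows essentially the same route as the paper: necessity via Proposition \ref{prop:degatsQTM} combined with Theorem \ref{th:countingexpectation}, and sufficiency by binomially expanding each $I_j(f_j^F)$ in the chaotic expansion, regrouping by the number $k$ of $N$-integrations so that the $k=0$ stratum cancels against $\E[F]$ by hypothesis and the $k\geq 1$ strata are identified with $c_k$ through Lemma \ref{lemma:technicalsumderivatives}. Your additional direct computation of $D^k$ on a pseudo-chaotic series to establish uniqueness of the coefficients is a nice complement (the paper instead deduces uniqueness from that of the ordinary chaotic expansion in a subsequent remark), but it does not change the substance of the argument.
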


\begin{proof}
Let $F$ in $\mathcal P^{T,M}$. Then, according to Theorem \ref{th:countingexpectation} and Proposition \ref{prop:degatsQTM}
\begin{align*}
&\sum_{j=1}^{+\infty} \frac{(-1)^j}{j!} \int_{([0,T]\times[0,M])^j} \E\left[D_{(x_1,\cdots,x_j)}^j F\right] dx_1 \cdots dx_j\\
&=\E\left[F (L^{T,M}-1)\right]\\
&=\E^{\mathbb Q^{T,M}}[F] - \E[F]\\
&=-\E[F].
\end{align*}
So 
$$ \sum_{j=1}^{+\infty} \frac{ (-1)^{j+1}}{j!} \int_{([0,T]\times[0,M])^j} \E\left[D_{(x_1,\cdots,x_j)}^j F\right] dx_1 \cdots dx_j = \E\left[F\right].$$
Conversely assume $F\in L^2(\Omega)$ is such that Relation (\ref{eq:expectationsumagain}) is true. The chaotic expansion (see Theorem \ref{th:chaoticgeneral}) allows one to write 
$$ F = \E[F] + \sum_{j=1}^{+\infty} \frac{1}{j!} I_j(\E[D^j F]).$$
The definition of the iterated integrals $I_j$ together with Relation (\ref{eq:expectationsumagain}) implies then that 
\begin{align*}
F &= \E[F] + \sum_{j=1}^{+\infty} \frac{1}{j!} I_j(f_j^F), \quad \textrm{ with } \quad  f_j^F(x_1,\cdots,x_j) = \E\left[D_{(x_1,\cdots,x_j)}^j F\right] \\
&= \sum_{k=1}^{+\infty} \frac{1}{k!} \int_{([0,T]\times[0,M])^k} c_k(x_1,\ldots,x_k) N(dx_1) \cdots N(dx_k),
\end{align*}
with 
\begin{align*}
c_k(x_1,\ldots,x_k)&:= f_k^F(x_1,\ldots,x_k) \\
&+ {k!} \sum_{j=k+1}^{+\infty} \frac{(-1)^{j-k}}{j!} \frac{j!}{{k!}(j-k)!}\int_{([0,T]\times[0,M])^{j-k}} f_j^F(x_1,\ldots,x_k,x_{k+1},\ldots,x_{j}) dx_{k+1} \cdots dx_j.
\end{align*}
Here the number $\frac{j!}{{k!}(j-k)!}$  of $k$-combinations among $j$ choices denotes the number of times the integral $\int_{([0,T]\times[0,M])^{j-k}} f_j^F(x_1,\ldots,x_k,x_{k+1},\ldots,x_{j}) dx_{k+1} \cdots dx_j$ of the symmetric function $f_j^F$ appears in the expansion of $I_j(f_j^F)$. Note also the choice of normalisation by factoring $c_k$ with $\frac{1}{k!}$ which explains the $k!$ factor in front of the sum.
We now compute each of these terms. Using the definition of the $f_j^F$ functions and Lemma \ref{lemma:technicalsumderivatives} we get 
\begin{align*}
&c_k(x_1,\ldots,x_k)\\
&= f_k^F(x_1,\ldots,x_k) + \sum_{j\geq k+1} \frac{(-1)^{j-k}}{j!} \frac{j!}{(j-k)!}\int_{\mathbb X^{j-k}} f_j^F(x_1,\ldots,x_k,x_{k+1},\ldots,x_{j}) dx_{k+1} \cdots dx_j\\
&= \E\left[D_{(x_1,\ldots,x_k)}^k F\right] + \sum_{j\geq k+1} \frac{(-1)^{j-k}}{j!} \frac{j!}{(j-k)!} \int_{\mathbb X^{j-k}} \E\left[D_{(x_1,\ldots,x_k,x_{k+1},\ldots,x_{j})}^j F\right] dx_{k+1} \cdots dx_j \\
&= \E\left[L^{T,M} D_{(x_1,\ldots,x_k)}^k F \right].
\end{align*}
\end{proof}

\begin{remark}
Note that the uniqueness of the coefficients in the chaotic expansion transfers to the uniqueness of the pseudo-chaotic expansion when it exists and is given by the coefficients $c_k$ in (\ref{eq:ck}).
\end{remark}

\noindent
We now apply this result to counting processes with bounded intensity processes.

\begin{theorem}
\label{th:pseudochaoticcounting}
Let $T>0$, $H$ a counting process with bounded intensity $\lambda$ by $M>0$ so that $H_T$ is given by (\ref{eq:counting}). Then $H_T$ admits a pseudo-chaotic expansion with respect to $N^{T,M}$ with 
\begin{equation}
\label{eq:pseudodecompositionckcounting}
H_T = \sum_{k=1}^{+\infty} \int_{([0,T]\times[0,M])^k} \frac{1}{k!} c_k(x_1,\ldots,x_k) N(dx_1) \cdots N(dx_k),
\end{equation}
\begin{equation}
\label{eq:ckcounting}
\hspace*{-0.5cm}c_k(x_1,\ldots,x_k) := \E\left[L^{T,M} D_{(x_{(1)},\ldots,x_{(k-1)})}^{k-1} \ind{\theta_{(k)}\leq \lambda_{(t_k)}}\right], \quad \forall (x_1,\ldots,x_k)\in ([0,T]\times[0,M])^k
\end{equation}
where according to Notation \ref{notation:ordered},  \;
$0 \leq t_{(1)} \leq \cdots \leq t_{(k)} \leq T$ \; are the ordered elements $(t_1,\ldots,t_k)$ and $x_{(i)}:=(t_{(i)},\theta_{(i)})$. 
\end{theorem}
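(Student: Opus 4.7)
The strategy is to apply the characterization Theorem \ref{th:characpseudo} to $F = H_T$. This requires two steps: (i) verifying that $H_T$ belongs to $\mathcal P^{T,M}$, and (ii) simplifying the abstract coefficient $\E[L^{T,M} D^k_{(x_1,\ldots,x_k)} H_T]$ into the explicit form (\ref{eq:ckcounting}).

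For (i), observe that Corollary \ref{cor:expectcounting} is precisely identity (\ref{eq:expectationsumagain}) written for $F = H_T$. Theorem \ref{th:characpseudo} then grants that $H_T$ admits the pseudo-chaotic expansion (\ref{eq:pseudodecompositionckcounting}) with $c_k(x_1,\ldots,x_k) = \E^{\mathbb Q^{T,M}}[D^k_{(x_1,\ldots,x_k)} H_T]$.

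For (ii), the key claim is that on the support of $\mathbb Q^{T,M}$, namely for $\omega$ having no atoms in $[0,T]\times[0,M]$,
$$D^k_{(x_1,\ldots,x_k)} H_T = D^{k-1}_{(x_{(1)},\ldots,x_{(k-1)})} \ind{\theta_{(k)} \leq \lambda_{t_{(k)}}}.$$
Since $D^k$ is symmetric in its entries (Definition \ref{definition:Dn}), I may work with the chronologically ordered tuple $(x_{(1)},\ldots,x_{(k)})$. Proposition \ref{prop:Dnaltenative} expresses
$$D^k_{(x_{(1)},\ldots,x_{(k)})} H_T(\omega) = \sum_{J \subset \{1,\ldots,k\}} (-1)^{k-|J|} H_T\!\left(\omega + \sum_{i \in J} \delta_{x_{(i)}}\right).$$
With no base atoms inside the rectangle, running the Poisson imbedding (\ref{eq:counting}) through the added atoms in chronological order, and using the $\mathbb F^N$-predictability of $\lambda$ (so that $\lambda_{t_{(i)}}$ ignores atoms at times $\geq t_{(i)}$), yields the telescoping identity
$$H_T\!\left(\omega + \sum_{i \in J} \delta_{x_{(i)}}\right) = \sum_{i \in J} \ind{\theta_{(i)} \leq \lambda_{t_{(i)}}(\omega + \sum_{\ell \in J,\, \ell < i} \delta_{x_{(\ell)}})}.$$
Interchanging the two sums and decomposing any $J \ni i$ uniquely as $\{i\} \sqcup J' \sqcup J''$ with $J' \subset \{1,\ldots,i-1\}$ and $J'' \subset \{i+1,\ldots,k\}$, the indicator depends only on $J'$, while the sum over $J''$ produces the factor $\sum_{J'' \subset \{i+1,\ldots,k\}} (-1)^{k-i-|J''|} = (1-1)^{k-i}$. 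This vanishes for every $i < k$ and equals $1$ for $i = k$. The surviving $i=k$ contribution reassembles, via Proposition \ref{prop:Dnaltenative} applied one order lower, into $D^{k-1}_{(x_{(1)},\ldots,x_{(k-1)})} \ind{\theta_{(k)} \leq \lambda_{t_{(k)}}}(\omega)$. Taking $\P$-expectation against $L^{T,M}$ then gives (\ref{eq:ckcounting}).

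The main obstacle is the telescoping representation for $H_T(\omega + \sum_{i \in J} \delta_{x_{(i)}})$: it relies on the $\mathbb F^N$-predictability of $\lambda$, which guarantees that later-added atoms $x_{(\ell)}$ (with $\ell > i$) cannot retroactively modify $\lambda_{t_{(i)}}$, and on the genericity that the times $t_{(1)} < \cdots < t_{(k)}$ are pairwise distinct, a property which holds up to a Lebesgue-null subset of $([0,T]\times[0,M])^k$ and is therefore harmless after integration. Once this identity is secured, the rest is the routine binomial cancellation $(1-1)^{k-i}=0$ for $i<k$.
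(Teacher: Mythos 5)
Your proposal is correct. The first half (Corollary \ref{cor:expectcounting} verifies condition (\ref{eq:expectationsumagain}), then Theorem \ref{th:characpseudo} yields the expansion with $c_k=\E[L^{T,M}D^k_{(x_1,\ldots,x_k)}H_T]$) is exactly the paper's argument. Where you diverge is in reducing $D^k H_T$ to $D^{k-1}\ind{\theta_{(k)}\leq\lambda_{t_{(k)}}}$: the paper uses the one-step decomposition (\ref{eq:DH}), which writes $D^k_{(x_{(1)},\ldots,x_{(k)})}H_T$ as the desired term plus a residual integral of $D^k\ind{\theta\leq\lambda_t}$ against $N$ on $[0,T]\times\real_+$, and then observes that $L^{T,M}$ annihilates that residual because it is supported on configurations with no atoms in the rectangle (and the integrand vanishes for $\theta>M$ since $\lambda\leq M$). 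You instead expand $D^k$ via the inclusion--exclusion formula of Proposition \ref{prop:Dnaltenative}, use that on $\{\omega([0,T]\times[0,M])=0\}$ the shifted value $H_T(\omega+\sum_{i\in J}\delta_{x_{(i)}})$ telescopes over the added atoms (predictability of $\lambda$ plus a.e.\ distinctness of the times), and kill all contributions with $i<k$ by the binomial identity $(1-1)^{k-i}=0$. Your route proves the stronger pointwise identity $D^k_{(x_{(1)},\ldots,x_{(k)})}H_T=D^{k-1}_{(x_{(1)},\ldots,x_{(k-1)})}\ind{\theta_{(k)}\leq\lambda_{t_{(k)}}}$ on the support of $\mathbb Q^{T,M}$ rather than only an equality after integration against $L^{T,M}$, at the cost of a slightly heavier combinatorial bookkeeping; it also anticipates precisely the subset expansion the paper deploys later in the proof of Theorem \ref{th:explicitHawkes} to make the Hawkes coefficients explicit, so the two computations are complementary and both land on (\ref{eq:ckcounting}).
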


\begin{proof}
Theorem \ref{cor:expectcounting} and Theorem \ref{th:characpseudo} give that $H_T$ admits a pseudo-chaotic expansion and 
$$H_T = \sum_{k=1}^{+\infty} \int_{([0,T]\times[0,M])^k} \frac{1}{k!} c_k(x_1,\ldots,x_k) N(dx_1) \cdots N(dx_k),$$
with $c_k(x_1,\ldots,x_k) := \E\left[L^{T,M} D_{(x_1,\ldots,x_k)}^k H_T  \right], \quad \forall (x_1,\ldots,x_k)\in ([0,T]\times[0,M])^k.$
Let $k\geq 1$ and $(x_1,\ldots,x_k)$ in $([0,T]\times[0,M])^k$. 
As $(x_1,\cdots,x_k) \mapsto D_{(x_1,\ldots,x_k)}^k H_T$ is symmetric, 
$$D_{(x_1,\ldots,x_k)}^k H_T = D_{(x_{(1)},\ldots,x_{(k)})}^k H_T =D_{x_1} \cdots D_{x_k} H_T.$$ 
Using the definition of $D$ and using the fact that 
$$ H_T = \int_{(0,T]\times [0,M]} \ind{\theta \leq \lambda_s} N(ds,d\theta) $$
one gets that 
\begin{equation}
\label{eq:DH}
D_{(x_{(1)},\ldots,x_{(k)})}^k H_T = D_{(x_{(1)},\ldots,x_{(k-1)})}^{k-1} \ind{\theta_{(k)}\leq \lambda_{(t_k)}} + \int_{[0,T]\times\real_+}  D_{(x_{(1)},\ldots,x_{(k)})}^k  \ind{\theta\leq \lambda_t} N(dt,d\theta).
\end{equation}
As $L^{T,M}$ annihilates the Poisson process on $[0,T]\times[0,M]$ it holds that 
$$ \E\left[L^{T,M} D_{(x_1,\ldots,x_k)}^k H_T  \right] = \E\left[L^{T,M} D_{(x_{(1)},\ldots,x_{(k-1)})}^{k-1} \ind{\theta_{(k)}\leq \lambda_{(t_k)}}\right].$$
\end{proof}

\section{Application to linear Hawkes processes}
\label{section:Hawkes}

Throughout this section $\Phi:\real_+\to\real_+$ denotes a map in $L^1(\real_+;dt)$.

\subsection{Generalities on linear Hawkes processes}

\begin{assumption}
\label{assumption:Phi}
The mapping $\Phi : \real_+ \to \real_+$ belongs to $L^1(\real_+;dt)$ with 
$$\|\Phi\|_1:=\int_{\real_+} \Phi(t) dt < 1.$$
\end{assumption}
\noindent For $f,g$ in $L^1(\real_+;dt)$ we define the convolution of $f$ and $g$ by 
$$(f\ast g)(t):=\int_0^t f(t-u) g(u) du, \quad t \geq 0.$$

\begin{prop}[See \textit{e.g.} \cite{Bacryetal2013}]
\label{prop:Phin}
Assume $\Phi$ enjoys Assumption \ref{assumption:Phi}. Let 
\begin{equation}
\label{eq:Phin}
\Phi_1:=\Phi, \quad \Phi_n(t):=\int_0^t \Phi(t-s) \Phi_{n-1}(s) ds, \quad t \in \real_+, \; n\in \mathbb{N}^*.
\end{equation}
For every $n\geq 1$, $\|\Phi_n\|_1 = \|\Phi\|_1^n$ and the mapping $\Psi:=\sum_{n=1}^{+\infty} \Phi_n$ is well-defined as a limit in $L_1(\real_+;dt)$ and $\|\Psi\|_1 = \frac{\|\Phi\|_1}{1-\|\Phi\|_1}$.
\end{prop}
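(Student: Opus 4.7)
The plan is to proceed in two short steps, both essentially standard facts on convolutions of nonnegative $L^1$ functions.

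First I would establish $\|\Phi_n\|_1 = \|\Phi\|_1^n$ by induction on $n$. The base case $n=1$ is by definition. For the inductive step, since $\Phi$ and $\Phi_{n-1}$ are nonnegative, Fubini--Tonelli applied to the definition $\Phi_n(t) = \int_0^t \Phi(t-s)\,\Phi_{n-1}(s)\, ds$ gives
$$\|\Phi_n\|_1 = \int_0^{+\infty}\!\!\int_0^t \Phi(t-s) \Phi_{n-1}(s)\, ds\, dt = \int_0^{+\infty} \!\Phi_{n-1}(s) \int_s^{+\infty} \!\!\Phi(t-s)\, dt \, ds = \|\Phi\|_1\cdot \|\Phi_{n-1}\|_1,$$
which combined with the induction hypothesis yields $\|\Phi_n\|_1 = \|\Phi\|_1^n$. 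Note that this in particular guarantees $\Phi_n \in L^1(\real_+;dt)$ (and it is nonnegative and measurable as the convolution of two such functions), so that the convolution defining $\Phi_{n+1}$ is well-posed and the induction can proceed.

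Second, once the norm identity is available, Assumption \ref{assumption:Phi} ensures
$$\sum_{n=1}^{+\infty} \|\Phi_n\|_1 = \sum_{n=1}^{+\infty} \|\Phi\|_1^n = \frac{\|\Phi\|_1}{1-\|\Phi\|_1} < +\infty,$$
as a convergent geometric series. By completeness of $L^1(\real_+;dt)$, this absolute convergence implies that $\Psi := \sum_{n=1}^{+\infty} \Phi_n$ is well-defined as a limit in $L^1(\real_+;dt)$. Moreover, since the partial sums $S_N := \sum_{n=1}^N \Phi_n$ are pointwise nondecreasing in $N$ (all $\Phi_n$ are nonnegative), the monotone convergence theorem allows one to pass the norm through the limit to obtain $\|\Psi\|_1 = \lim_{N\to +\infty} \|S_N\|_1 = \sum_{n=1}^{+\infty}\|\Phi_n\|_1 = \frac{\|\Phi\|_1}{1-\|\Phi\|_1}$.

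The argument involves no real obstacle: the only point requiring mild care is the use of Fubini--Tonelli to upgrade the generic Young-type inequality $\|f\ast g\|_1 \leq \|f\|_1 \|g\|_1$ into an equality on nonnegative functions, and the parallel use of $L^1$-completeness (to define $\Psi$) and of monotone convergence (to identify its norm with the geometric sum).
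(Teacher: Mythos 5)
The paper offers no proof of this proposition: it is imported from the cited reference \cite{Bacryetal2013}, so there is no internal argument to compare against. Your proof is correct and complete — the Tonelli computation upgrades Young's inequality to the exact identity $\|\Phi_n\|_1=\|\Phi\|_1\,\|\Phi_{n-1}\|_1$ for nonnegative kernels, and the combination of $L^1$-completeness (for existence of $\Psi$) with monotone convergence of the nonnegative partial sums (for $\|\Psi\|_1=\sum_n\|\Phi\|_1^n$) is exactly the standard route one would take.
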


\begin{definition}[Linear Hawkes process, \cite{Hawkes}]
\label{def:standardHawkes}
Let $(\Omega,\mathcal F,\P,\mathbb F:=(\mathcal F_t)_{t\geq 0})$ be a filtered probability space, $\mu>0$ and $\Phi:\real_+ \to \real_+$ satisfying Assumption \ref{assumption:Phi}. A linear Hawkes process $H:=(H_t)_{t\geq 0}$ with parameters $\mu$ and $\Phi$ is a counting process such that   
\begin{itemize}
\item[(i)] $H_0=0,\quad \P-a.s.$,
\item[(ii)] its ($\mathbb{F}$-predictable) intensity process is given by
$$\lambda_t:=\mu + \int_{(0,t)} \Phi(t-s) dH_s, \quad t\geq 0,$$
that is for any $0\leq s \leq t$ and $A \in \mathcal{F}_s$,
$$ \E\left[\textbf{1}_A (H_t-H_s) \right] = \E\left[\int_{(s,t]} \textbf{1}_A \lambda_r dr \right].$$
\end{itemize}
\end{definition}

\subsection{Pseudo-chaotic expansion of linear Hawkes processes and explicit representation}
We aim at providing the coefficients in the pseudo-chaotic expansion of a linear Hawkes process. We start with some general facts regarding a linear Hawkes process.

\begin{prop}
\label{prop:systemHawkesdeterministe}
Let $\Phi$ as in Assumption \ref{assumption:Phi} and $\mu>0$ and $(H,\lambda)$ the Hawkes process defined as the unique solution to the SDE 
$$
\left\lbrace
\begin{array}{l}
H_t = \int_{(0,t]\times \real} \ind{\theta\leq \lambda_s} N(ds,d\theta),\\\\
\lambda_t = \mu +\int_{(0,t)\times\real_+} \Phi(t-s) dH_s,\quad t\geq 0
\end{array}
\right.
$$
Let $n \in \mathbb N^*$ and $\{y_1,\ldots,y_n\} = \{(s_1,\theta_1),\ldots,(s_n,\theta_n)\} \subset \mathbb X$ with $ 0 \leq s_1\leq \cdots \leq s_n \leq t$. \\
We set $(a_1^{\{y_1,\ldots,y_n\}},\cdots,a_n^{\{y_1,\ldots,y_n\}})$ the solution to the system 
\begin{equation}
\label{eq:systemHawkes1}
\left\lbrace
\begin{array}{l}
a_1^{\{y_1,\ldots,y_n\}} = \mu + \ind{\theta_1 \leq \mu},\\\\
a_j^{\{y_1,\ldots,y_n\}} = \mu + \displaystyle{\sum_{i=1}^{j-1} \Phi(s_j-s_i) \ind{\theta_i \leq a_i^{\{y_1,\ldots,y_n\}}}}, \quad j \in \{2,\ldots,k\}.
\end{array}
\right.
\end{equation}
which is the triangular system
\begin{equation}
\label{eq:systemHawkes2}
\left\lbrace
\begin{array}{l}
a_1^{\{y_1,\ldots,y_n\}} = \mu + \ind{\theta_1 \leq \mu},\\\\
a_2^{\{y_1,\ldots,y_n\}} = \mu + \displaystyle{\Phi(s_2-s_1) \ind{\theta_2 \leq a_1^{\{y_1,\ldots,y_n\}}}},\\\\
\hspace{5em}\vdots\\\\
a_n^{\{y_1,\ldots,y_n\}} = \mu + \displaystyle{\sum_{i=1}^{n-1} \Phi(s_n-s_i) \ind{\theta_i \leq a_i^{\{y_1,\ldots,y_n\}}}}.
\end{array}
\right.
\end{equation}
\noindent
Let $\varpi_{\{y_1,\ldots,y_n\}} := \sum_{i=1}^n \delta_{y_i} \in \Omega.$ Then the values of the deterministic path $\lambda(\varpi_{\{y_1,\ldots,y_n\}})$ (resulting from the evaluation of $\lambda$ at the specific $\omega=\varpi_{\{y_1,\ldots,y_n\}}$) at times $s_1,\ldots,s_n$ is given by 
$$ (\lambda_{s_1}(\varpi_{\{y_1,\ldots,y_n\}}),\ldots,\lambda_{s_n}(\varpi_{\{y_1,\ldots,y_n\}}))=(a_1^{\{y_1,\ldots,y_n\}},\ldots,a_n^{\{y_1,\ldots,y_n\}}).$$
In addition 
\begin{equation}
\label{eq:lambdaspecial}
\lambda_t (\varpi_{\{y_1,\ldots,y_n\}}) = \mu + \sum_{i=1}^{n-1} \Phi(t-s_i) \ind{\theta_i \leq a_i^{\{y_1,\ldots,y_n\}}} \ind{s_i < t}, \quad \forall t\geq s_n.
\end{equation}
\end{prop}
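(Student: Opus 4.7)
The statement is purely pathwise: we evaluate the unique solution of the Hawkes SDE at the deterministic configuration $\omega=\varpi_{\{y_1,\ldots,y_n\}}$ and identify it with the triangular system. The plan is to (i) rewrite the SDE pathwise at this $\omega$, (ii) verify by finite induction on $j$ that $\lambda_{s_j}(\varpi_{\{y_1,\ldots,y_n\}})=a_j^{\{y_1,\ldots,y_n\}}$, then (iii) substitute back to obtain the formula for generic $t\geq s_n$.

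First I would notice that since $N(\varpi_{\{y_1,\ldots,y_n\}})=\sum_{i=1}^n \delta_{y_i}$, the integral in the definition of $\lambda$ collapses to a finite sum and yields, for every $t>0$, the pathwise identity
\begin{equation*}
\lambda_t(\varpi_{\{y_1,\ldots,y_n\}}) \;=\; \mu + \sum_{i\,:\,s_i<t}\Phi(t-s_i)\,\ind{\theta_i \leq \lambda_{s_i}(\varpi_{\{y_1,\ldots,y_n\}})}.
\end{equation*}
The key point here is the $\mathbb F^N$-predictability of $\lambda$: it guarantees that $\lambda_{s_j}(\omega)$ depends only on $N(\omega)|_{[0,s_j)}$, so the right-hand side above is causal in the index $j$ and only the atoms $y_i$ with $s_i<s_j$ enter the expression for $\lambda_{s_j}(\varpi_{\{y_1,\ldots,y_n\}})$.

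Specializing $t=s_j$ in the display above turns the identity into $\lambda_{s_j}(\varpi_{\{y_1,\ldots,y_n\}})=\mu+\sum_{i=1}^{j-1}\Phi(s_j-s_i)\,\ind{\theta_i \leq \lambda_{s_i}(\varpi_{\{y_1,\ldots,y_n\}})}$, interpreting the empty sum through Convention \ref{convention:sums} for $j=1$. This is precisely the recursion (\ref{eq:systemHawkes1}) defining $a_j^{\{y_1,\ldots,y_n\}}$, so a straightforward finite induction on $j\in\{1,\ldots,n\}$ gives $\lambda_{s_j}(\varpi_{\{y_1,\ldots,y_n\}})=a_j^{\{y_1,\ldots,y_n\}}$ for all $j$. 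Applying once more the pathwise identity at an arbitrary $t\geq s_n$ and substituting the values $\lambda_{s_i}(\varpi_{\{y_1,\ldots,y_n\}})=a_i^{\{y_1,\ldots,y_n\}}$ then yields (\ref{eq:lambdaspecial}), where the factor $\ind{s_i<t}$ merely records the (open) right endpoint of the integration domain.

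I expect the only mild obstacle to be the bookkeeping around strict versus non-strict inequalities at the endpoints, which stems from the choice of $(0,t)$ open on the right in the definition of $\lambda_t$, together with the careful reading of the initial step $j=1$ through the empty-sum convention. Everything else is a direct pathwise substitution enabled by the well-posedness of the Hawkes SDE recalled in the introduction, which ensures that the values produced by the triangular system do coincide with the evaluation of the unique solution at $\omega=\varpi_{\{y_1,\ldots,y_n\}}$.
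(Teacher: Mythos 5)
Your proposal is correct and follows essentially the same route as the paper's proof: evaluate the intensity pathwise at the discrete configuration $\varpi_{\{y_1,\ldots,y_n\}}$ so that the integral collapses to a finite causal sum over the atoms with $s_i<t$, identify $\lambda_{s_j}(\varpi_{\{y_1,\ldots,y_n\}})$ with $a_j^{\{y_1,\ldots,y_n\}}$ by finite induction on the ordered times, and substitute back for $t\geq s_n$. The only point worth flagging is that your pathwise identity correctly includes the term $i=n$ when $t>s_n$, whereas the displayed formula (\ref{eq:lambdaspecial}) (and the paper's own interval decomposition of $(0,t)$) stops the sum at $n-1$; this is a discrepancy in the paper's statement rather than a gap in your argument.
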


\begin{proof}
Let $t \geq 0$. By definition of $\lambda$,

\begin{align*}
& \lambda_t (\varpi_{\{y_1,\ldots,y_n\}}) \\
&:= \mu + \left(\int_{(0,t)} \Phi(t-u) dH_u\right)(\varpi_{\{y_1,\ldots,y_n\}}) \\
&=\mu + \left(\int_{(0,t)} \Phi(t-u) \ind{\theta \leq \lambda_u} N(du,d\theta)\right)(\varpi_{\{y_1,\ldots,y_n\}}) \\
&=\mu + \int_{(0,t)} \Phi(t-u) \ind{\theta \leq \lambda_u(\varpi_{\{y_1,\ldots,y_n\}})} (N(du,d\theta)(\varpi_{\{y_1,\ldots,y_n\}})) \\
&=\mu + \int_{(0,t)} \Phi(t-u) \ind{\theta \leq \lambda_u(\varpi_{\{y_1,\ldots,y_n\}})} (\varpi_{\{y_1,\ldots,y_n\}})(du,d\theta)\\
&=\mu + \int_{(0,s_1)} \Phi(t-u) \ind{\theta \leq \lambda_u(\varpi_{\{y_1,\ldots,y_n\}})} \ind{u < t} (\varpi_{\{y_1,\ldots,y_n\}})(du,d\theta)\\
&+\sum_{i=1}^{n-1} \int_{[s_i,s_{i+1})} \Phi(t-u) \ind{\theta \leq \lambda_u(\varpi_{\{y_1,\ldots,y_n\}})} \ind{u < t} (\varpi_{\{y_1,\ldots,y_n\}})(du,d\theta) \\
&=\mu + \sum_{i=1}^{n-1} \int_{[s_i,s_{i+1})} \Phi(t-u) \ind{\theta \leq \lambda_u(\varpi_{\{y_1,\ldots,y_n\}})} \ind{u < t} (\varpi_{\{y_1,\ldots,y_n\}})(du,d\theta) \\
&=\mu + \sum_{i=1}^{n-1} \Phi(t-s_i) \ind{\theta_i \leq \lambda_{s_i}(\varpi_{\{y_1,\ldots,y_n\}})} \ind{s_i < t}.
\end{align*}
In addition, by definition, of $\lambda$, for any $i$, $\lambda_{s_i}(\varpi_{\{y_1,\ldots,y_n\}}) = \lambda_{s_i}(\varpi_{(y_1,\ldots,y_{i-1})})$. Hence, the evaluation of $\lambda$ at the specific path $\varpi_{\{y_1,\ldots,y_n\}}$ is the deterministic path completely determined by its value at the dates $s_1,\ldots,s_n$. Indeed, 
$$ \lambda_t (\varpi_{\{y_1,\ldots,y_n\}}) = \mu, \quad \forall t\in [0,s_1],$$
in particular $a_1:=\lambda_{s_1} (\varpi_{\{y_1,\ldots,y_n\}}) = \mu$.
From this we deduce that for $t\in (s_1,s_2]$,
$$ \lambda_t (\varpi_{\{y_1,\ldots,y_n\}}) = \mu + \Phi(t-s_1) \ind{\theta_1 \leq \lambda_{s_1}(\varpi_{\{y_1,\ldots,y_n\}})} = \mu + \Phi(t-s_1) \ind{\theta_1 \leq \mu}.$$
In particular $a_2:=\lambda_{s_2} (\varpi_{\{y_1,\ldots,y_n\}}) = \mu + \Phi(s_2-s_1) \ind{\theta_1 \leq \mu} = \mu + \Phi(s_2-s_1) \ind{\theta_1 \leq a_2}$. By induction we get that for $t\in (s_j,s_{j+1}]$ ($j\in \{1,\cdots,n-1\}$),
$$ \lambda_t (\varpi_{\{y_1,\ldots,y_n\}}) = \mu + \sum_{i=1}^{j} \Phi(t-s_i) \ind{\theta_i \leq a_i} \ind{s_i < t},$$
with $a_i:=\lambda_{s_i}(\varpi_{\{y_1,\ldots,y_n\}})$. In other words, $(a_1,\ldots,a_n)$ solves the triangular system of the statement.
\end{proof}

\begin{theorem}$[$Pseudo-chaotic expansion for linear Hawkes processes$]$
\label{th:explicitHawkes}\\
Let $\Phi$ as in Assumption \ref{assumption:Phi} and $\mu>0$. Assume in addition that\footnote{with classical notations $\|\Phi\|_\infty:=\sup_{t \geq 0} \Phi(t)$} $\|\Phi\|_\infty < +\infty$. Let $(H,\lambda)$ be the unique solution of 
\begin{equation}
\label{eq:Hawkes}
\left\lbrace
\begin{array}{l}
H_t = \int_{(0,t]\times \real} \ind{\theta\leq \lambda_s} N(ds,d\theta),\\\\
\lambda_t = \mu +\int_{(0,t)\times\real_+} \Phi(t-s) dH_s,\quad t\geq 0
\end{array}
\right.
\end{equation}
Then $H$ is a linear Hawkes process with intensity $\lambda$ in the sense of Definition \ref{def:standardHawkes}. For any $T>0$, $H_T$ admits the pseudo-chaotic expansion below : 
\begin{equation}
\label{eq:pseudochaosHawkes}
H_T = \sum_{k=1}^{+\infty} \int_{\mathbb X^k} \frac{1}{k!} c_k(x_1,\ldots,x_k) N(dx_1) \cdots N(dx_k),
\end{equation}
$$
\left\lbrace
\begin{array}{l}
c_1(x_1) =\ind{\theta_{1}\leq \mu},\\\\
c_k(x_1,\ldots,x_k) =\displaystyle{(-1)^{k-1} \ind{\theta_{k}\leq \mu} + \sum_{n=1}^{k-1} \sum_{\{y_1,\ldots,y_n\} \subset \{x_{(1)},\ldots,x_{(k-1)}\}} (-1)^{k-1-n} \ind{\theta_{k}\leq \lambda_{x_k} (\varpi_{\{y_1,\ldots,y_n\}})}}, \quad k\geq 2
\end{array}
\right.
$$
where :
\begin{itemize}
\item[-] We recall Notation \ref{notation:ordered} for $(x_{(1)},\ldots,x_{(k)})$
\item[-] Notation $\sum_{\{y_1,\ldots,y_n\} \subset \{x_1,\ldots,x_{k-1}\}}$ stands for the sum over all subsets $\{y_1,\ldots,y_n\}$ of cardinal $n$ of $\{x_1,\ldots,x_{k-1}\}$ 
\item[-] $\lambda_t (\varpi_{\{y_1,\ldots,y_n\}})$ is given by (\ref{eq:lambdaspecial}) in Proposition \ref{prop:systemHawkesdeterministe}.
\end{itemize}
\end{theorem}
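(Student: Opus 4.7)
The plan is to apply Theorem \ref{th:pseudochaoticcounting} after a localization step, and then evaluate the resulting coefficients in closed form via the deterministic description of the Hawkes intensity provided by Proposition \ref{prop:systemHawkesdeterministe}.

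Since Theorem \ref{th:pseudochaoticcounting} requires a deterministic bound on $\lambda$ while a linear Hawkes intensity is only almost surely finite, I would first argue by truncation at level $M$. For each $M$ one considers the Hawkes-type process built from the Poisson measure restricted to $[0,T]\times[0,M]$, applies Theorem \ref{th:pseudochaoticcounting}, and then lets $M\to +\infty$. The assumption $\|\Phi\|_\infty<+\infty$ is decisive here: the coefficient $c_k$ announced in the statement has support in its $\theta$-variables contained in $[0,\mu+(k-1)\|\Phi\|_\infty]^k$, uniformly in $M$, so that the coefficients stabilize as $M$ grows; the assumption $\|\Phi\|_1<1$ together with Proposition \ref{prop:Phin} supplies the $L^2(\P)$-bounds on $H_T$ needed to pass the series through the limit.

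Once localized, the computation is algebraic. By Theorem \ref{th:pseudochaoticcounting},
\[
c_k(x_1,\ldots,x_k) = \E\!\left[L^{T,M}\,D^{k-1}_{(x_{(1)},\ldots,x_{(k-1)})}\ind{\theta_{(k)}\leq\lambda_{t_{(k)}}}\right].
\]
Proposition \ref{prop:Dnaltenative} expands the iterated Malliavin derivative as an alternating sum over subsets:
\[
D^{k-1}_{(x_{(1)},\ldots,x_{(k-1)})}\ind{\theta_{(k)}\leq\lambda_{t_{(k)}}}(\omega) = \sum_{J\subset\{1,\ldots,k-1\}} (-1)^{k-1-|J|}\,\ind{\theta_{(k)}\leq\lambda_{t_{(k)}}\left(\omega+\sum_{j\in J}\delta_{x_{(j)}}\right)}.
\]
The key simplification is that $L^{T,M}\,d\P = d\mathbb{Q}^{T,M}$ is concentrated on configurations $\omega$ carrying no atom in $[0,T]\times[0,M]$; for $M$ large enough (in particular $M\geq\mu+(k-1)\|\Phi\|_\infty$, so that all atoms which can contribute to a Hawkes intensity built from the $x_{(j)}$'s lie inside $[0,T]\times[0,M]$), one has
\[
\lambda_{t_{(k)}}\!\left(\omega+\sum_{j\in J}\delta_{x_{(j)}}\right) = \lambda_{t_{(k)}}\!\left(\varpi_{\{x_{(j)},\,j\in J\}}\right),\qquad \mathbb{Q}^{T,M}\textrm{-a.s.},
\]
the right-hand side being the deterministic value furnished by Proposition \ref{prop:systemHawkesdeterministe}. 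Pulling the now-deterministic indicators out of the expectation and using $\E^{\mathbb{Q}^{T,M}}[1]=1$ then gives
\[
c_k(x_1,\ldots,x_k) = \sum_{J\subset\{1,\ldots,k-1\}} (-1)^{k-1-|J|}\,\ind{\theta_{(k)}\leq\lambda_{t_{(k)}}(\varpi_{\{x_{(j)},\,j\in J\}})}.
\]
Splitting off $J=\emptyset$, for which $\lambda_{t_{(k)}}(\varpi_\emptyset)=\mu$, and grouping the remaining subsets by their cardinality $n=|J|$ reproduces the stated formula for $k\geq 2$; the case $k=1$, handled separately via Convention \ref{convention:sums}, directly yields $c_1(x_1)=\ind{\theta_1\leq\mu}$.

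The main obstacle I anticipate is the localization step: one must construct the truncated processes so that Theorem \ref{th:pseudochaoticcounting} applies uniformly in $M$, control the $L^2$-distance to $H_T$, and justify termwise convergence of the corresponding pseudo-chaotic sums to the announced expansion. The algebraic identification of the $c_k$, by contrast, is a routine consequence of the add-points expansion of $D^{k-1}$ combined with the concentration property of $\mathbb{Q}^{T,M}$ and the deterministic evaluation of $\lambda$ in Proposition \ref{prop:systemHawkesdeterministe}.
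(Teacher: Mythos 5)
Your proposal is correct and follows essentially the same route as the paper: localize via truncation at level $M$ (the paper works on the events $\Omega^M=\{\sup_{t\in[0,T]}\lambda_t\leq M\}$, whose complements shrink to a null set by Markov's inequality), apply Theorem \ref{th:pseudochaoticcounting} to the truncated process, expand $D^{k-1}$ via Proposition \ref{prop:Dnaltenative}, and use the fact that $L^{T,M}$ annihilates the atoms of $\omega$ in $[0,T]\times[0,M]$ to reduce the intensity to its deterministic evaluation at $\varpi_{\{x_{(j)},\,j\in J\}}$ from Proposition \ref{prop:systemHawkesdeterministe}. The only cosmetic difference is in the final passage to the limit, where the paper argues pathwise on $\Omega^M$ rather than via $L^2$ bounds and support considerations on the $\theta$-variables, but both are sound.
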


\begin{proof}
First by \cite{Bremaud_Massoulie,Costa_etal,Hillairet_Reveillac_Rosenbaum} the system of SDEs (\ref{eq:Hawkes}) admits a unique solution which is a Hawkes process (we refer to \cite{Hillairet_Reveillac_Rosenbaum} for more details on the construction with pathwise uniqueness). Fix $T>0$. Then for $M > \mu$, we set : 
$$ \Omega^M:=\left\{\sup_{t\in [0,T]} \lambda_t \leq M\right\} \subset \Omega. $$
By Markov's inequality
$$ \P[\Omega \setminus \Omega^M] \leq \frac{\E\left[\sup_{t\in [0,T]} \lambda_t\right]}{M} \leq \frac{\mu+\|\Phi\|_\infty \E\left[H_T\right]}{M} \leq \frac{\mu+\|\Phi\|_\infty \|\Phi\|_1 (1-\|\Phi\|_1)^{-1}}{M}.$$
Letting $\bar{\Omega}:=\lim_{M \to +\infty} [\Omega\setminus \Omega^M]$ where the limit is understood as an decreasing sequence of sets, $\P[\bar{\Omega}]=0$.\\\\
\noindent
Fix $M\geq \mu$ and set $(H^M,\lambda^M)$ the unique solution to 
\begin{equation}
\label{eq:HawkesM}
\left\lbrace
\begin{array}{l}
H_t^M = \int_{(0,t]\times [0,M]} \ind{\theta\leq \lambda_s} N(ds,d\theta),\\\\
\lambda_t^M = \mu +\int_{(0,t)\times\real_+} \Phi(t-s) dH_s^M,\quad t\in [0,T].
\end{array}
\right.
\end{equation}
By construction $H^M$ is a counting process with intensity $\lambda \wedge M$ and $(H^M,\lambda^M) = (H,\lambda)$ on $\Omega^M$ by uniqueness of the solution to the SDE. Hence by Theorem \ref{th:pseudochaoticcounting}, $H_T^M$ admits a pseudo-chaotic expansion with respect to $N^{T,M}$ and 
\begin{equation}
\label{eq:pseudodecompositionckcountingtemp}
H_T^M = \sum_{k=1}^{+\infty} \int_{([0,T]\times[0,M])^k} \frac{1}{k!} c_k(x_1,\ldots,x_k) N(dx_1) \cdots N(dx_k),
\end{equation}
\begin{equation}
\label{eq:ckcountingtemp}
c_k(x_1,\ldots,x_k) := \E\left[L^{T,M} D_{(x_{(1)},\ldots,x_{(k-1)})}^{k-1} \ind{\theta_{(k)}\leq \lambda_{(t_k)}}\right], \quad \forall (x_1,\ldots,x_k)\in ([0,T]\times[0,M])^k,
\end{equation}
with the ordering convention of Notation \ref{notation:ordered}. We should mention that the only dependency on $M$ in the coefficients $c_k$ is simply the domain of the variables $(x_1,\ldots,x_k)$ in $([0,T]\times[0,M])^k$. For such $k$ and $(x_1,\ldots,x_k)$ (where for simplicity we assume that $(x_{(1)},\ldots,x_{(k)}) = (x_{1},\ldots,x_{k})$) we have using Proposition \ref{prop:Dnaltenative} that for $\omega \in \Omega^M$,
\begin{align*}
&L^{T,M}(\omega) (D_{(x_{1},\ldots,x_{k-1})}^{k-1} \ind{\theta_{k}\leq \lambda_{t_k}})(\omega) \\
&=L^{T,M}(\omega)  \sum_{J \subset \{1,\cdots,k-1\}} (-1)^{k-1-|J|} \ind{\theta_{k}\leq \lambda_{t_k}(\omega+ \sum_{j\in J} \delta_{x_j})},
\end{align*}
where the sum is over all subsets $J$ of $\{1,\cdots,k-1\}$ including the empty set which is of cardinal $0$. Hence,
\begin{align*}
&L^{T,M}(\omega) (D_{(x_{1},\ldots,x_{k-1})}^{k-1} \ind{\theta_{k}\leq \lambda_{t_k}})(\omega) \\
&=\exp(MT) \ind{(N([0,T]\times[0,M])(\omega))=0} \sum_{J \subset \{1,\cdots,k-1\}} (-1)^{k-1-|J|} \ind{\theta_{k}\leq \lambda_{t_k}(\omega + \sum_{j\in J} \delta_{x_j})} \\
&=\exp(MT) \ind{\omega([0,T]\times[0,M])=0} \sum_{J \subset \{1,\cdots,k-1\}} (-1)^{k-1-|J|} \ind{\theta_{k}\leq \lambda_{t_k}(\omega + \sum_{j\in J} \delta_{x_j})} \\
&=\exp(MT) \ind{\omega([0,T]\times[0,M])=0} \sum_{J \subset \{1,\cdots,k-1\}} (-1)^{k-1-|J|} \ind{\theta_{k}\leq \lambda_{t_k}(\sum_{j\in J} \delta_{x_j})}.
\end{align*}
Recall that $\E[\exp(MT) \ind{\omega([0,T]\times[0,M])=0}] =1$.
In other words the effect of $L^{T,M}$ is to freeze the evaluation of the intensity process $\lambda$ on a specific outcome given by the atoms $(x_{1},\ldots,x_{k-1})$. Taking the expectation and  reorganizing the sum above we get 
\begin{align*}
c_k(x_1,\ldots,x_k) &= \E\left[L^{T,M} D_{(x_{1},\ldots,x_{k-1})}^{k-1} \ind{\theta_{k}\leq \lambda_{t_k}} \right] \\
&=\sum_{J \subset \{1,\cdots,k-1\}} (-1)^{k-1-|J|} \ind{\theta_{k}\leq \lambda_{t_k}(\sum_{j\in J} \delta_{x_j})}\\
&=(-1)^{k-1} \ind{\theta_{k}\leq \mu}+\sum_{n=1}^{k-1} \sum_{\{y_1,\ldots,y_n\} \subset \{x_1,\ldots,x_{k-1}\}} (-1)^{k-1-n} \ind{\theta_{k}\leq \lambda_{t_k}(\varpi_{\{y_1,\ldots,y_n\}}}\\
&=(-1)^{k-1} \ind{\theta_{k}\leq \mu}+\sum_{n=1}^{k-1} \sum_{\{y_1,\ldots,y_n\} \subset \{x_1,\ldots,x_{k-1}\}} (-1)^{k-1-n} \ind{\theta_{k}\leq a_k^{\{y_1,\ldots,y_n\}}}.
\end{align*}
Note that in each term $\ind{\theta_{k}\leq \lambda_{t_k}(\sum_{j\in J} \delta_{x_j})}$, $\sum_{j\in J} \delta_{x_j}$ is deterministic and $\lambda_{t_k}(\sum_{j\in J} \delta_{x_j})$ is explicitly given by the triangular system in Notation \ref{prop:systemHawkesdeterministe}. For $k=1$, the previous expression just reduces to 
$$c_1(x_1) = \E\left[L^{T,M} \ind{\theta_{1}\leq \lambda_{t_1}}\right] = \ind{\theta_{1}\leq \mu}.$$
Finally, as 
$$ H_T(\omega) = H_T^M(\omega), \quad \textrm{ on } \Omega^M,$$
for any $k\geq 1$ 
\begin{align*}
&\int_{([0,T]\times[0,M])^k} \frac{1}{k!} c_k(x_1,\ldots,x_k) N(dx_1) \cdots N(dx_k) \\
&= \int_{\mathbb X^k} \frac{1}{k!} c_k(x_1,\ldots,x_k) N(dx_1) \cdots N(dx_k), \; \textrm{ on } \Omega^M
\end{align*}
and thus the expansion holds true on $\Omega \setminus \bar{\Omega}$ and $\P[\bar{\Omega}]=0$.
\end{proof}

\begin{remark}
The boundedness assumption on $\Phi$ in Theorem \ref{th:explicitHawkes} is not sharpe and can be replaced with any assumption ensuring that for any $T>0$, $\E[\sup_{t\in[0,T]} \lambda_t] <+\infty$.
\end{remark}

\begin{remark}
To the price of cumbersome notations, the previous result can be extended to non-linear Hawkes processes; that is counting processes $H$ with intensity process of the form 
\begin{equation}
\label{eq:HawkesGeneral}
\left\lbrace
\begin{array}{l}
H_t = \int_{(0,t]\times \real_+} \ind{\theta\leq \lambda_s} N(ds,d\theta),\\\\
\lambda_t = h\left(\mu +\int_{(0,t)\times\real_+} \Phi(t-s) dH_s\right),\quad t\in [0,T].
\end{array}
\right.
\end{equation}
where $h:\real \to \real_+$, $\Phi:\real_+ \to \real$ and $\|h\|_1 \|\Phi\|_1 < 1$. Indeed, when computing the coefficients in the expansion, the Poisson measure $N$ is cancelled and involves evaluations of the intensity process at a specific configurations of the form $\varpi_{\{y_1,\ldots,y_n\}}$. This evaluation can be done by a straightforward extension of Proposition \ref{prop:systemHawkesdeterministe} for a non-linear Hawkes process; in other words for both linear or non-linear process the intensity process $\lambda$ is a deterministic function of the  fixed configuration of the form $\varpi_{\{y_1,\ldots,y_n\}}$.
\end{remark}

\begin{discussion}
\label{discussion:avantageforHawkes}
We would like to comment on the advantage of the pseudo-chaotic expansion compared to the usual one for the value $H_T$ of a linear Hawkes process at any time $T$. Recall the two decompositions 
\begin{align*}
H_T &= \E[H_T] + \sum_{j=1}^{+\infty} \frac{1}{j!} I_j(f_j^{H_T}), \quad \textrm{ with } \quad  f_j^{H_T}(x_1,\cdots,x_j) = \E\left[D_{(x_1,\cdots,x_j)}^j H_T\right] \\
&= \sum_{k=1}^{+\infty} \int_{\mathbb X^k} \frac{1}{k!} c_k(x_1,\ldots,x_k) N(dx_1) \cdots N(dx_k)
\end{align*}
For the chaotic expansion, in order to determine each coefficient $f_j$ one has to compute $f_j^{H_T}(x_1,\cdots,x_j) = \E\left[D_{(x_1,\cdots,x_j)}^j H_T\right]$ which turns out to be quite implicit for a general $\Phi$ kernel. Indeed, already for the first coefficient, using Relation (\ref{eq:DH}) we have
\begin{align*}
f_j^{H_T}(x_1) &= \E\left[D_{x_1} H_T\right] \\
& = \E\left[\ind{\theta_1 \leq \lambda_{t_1}} \right] + \int_{(t_1,T]\times \real_+} \E\left[D_{x_{1}}  \ind{\theta\leq \lambda_s} \right] d \theta ds \\
& = \P\left[\theta_1 \leq \lambda_{t_1}\right] + \int_{(t_1,T]} \E\left[D_{x_{1}} \lambda_s \right] ds.
\end{align*}
The quantity $\int_{(t_1,T]} \E\left[D_{x_{1}} \lambda_s \right] ds$ has been computed in \cite{HHKR} however a closed form expression for $\P\left[\theta_1 \leq \lambda_{t_1}\right]$ for any kernel $\Phi$ satisfying Assumption \ref{assumption:Phi}  is unknown to the authors. \\\\
\noindent
In contradistinction, Theorem \ref{th:explicitHawkes} gives an explicit expression for the coefficients $c_k$. In that sense, the pseudo-chaotic expansion (\ref{eq:pseudochaosHawkes}) is an exact representation and an explicit solution to the Hawkes equation formulation as given in Definition \ref{def:standardHawkes}.
\end{discussion}

\section{The pseudo-chaotic expansion and the Hawkes equation}
\label{section:almostHawkes}

The aim of this section is to investigate further the link between a decomposition of the form (\ref{eq:pseudodecompositionckcountingtemp}) that we named pseudo-chaotic expansion and the characterization of a Hawkes process as in Definition \ref{def:standardHawkes}. 
First, let us emphasize that 
both  the standard chaotic expansion and the pseudo-chaotic expansion characterize a given random variable and not a stochastic process. For instance in Theorem \ref{th:explicitHawkes},  the coefficients $c_k$ for the expansion of $H_T$ depend on the time $T$. %This remark is obviously valid for the pseudo-chaotic expansion. 
In this section, 
we consider once again the linear Hawkes process,  which is essentially described as a counting process with a specific stochastic intensity like in (\ref{eq:introlambdaHawkes}), and we adopt a different point of view based on population dynamics and branching representation as in \cite{hawkes1974cluster}  or \cite{boumezoued2016population}. Inspired by this branching representation, we  build in Theorem \ref{th:almostHawkes} below a stochastic process \textit{via} its pseudo-chaotic expansion which is an integer-valued piecewise-constant and non-decreasing process with the specific intensity form of a Hawkes process.  Nevertheless, although this stochastic process satisfies the stochastic self-exciting intensity equation which determines a Hawkes process, it fails to be a counting process as it may exhibit jumps larger than one. 
This leaves open further developments for studying the pseudo-chaotic expansion of processes; we refer to Discussion \ref{discussion:finale}.

\subsection{A pseudo-chaotic expansion and branching representation}
\noindent
Throughout this section we will make use of classical stochastic analysis tools hence we describe elements of $\mathbb X$ as $(t,\theta)$ instead of $x$ for avoiding any confusion. The branching representation viewpoint consists in   counting  the number of individuals in generation $n$, where generation $1$ corresponds of the migrants. 
We therefore define a series of counting processes $X_t^{(n)}$ (where $n$ stands for the generation) as follows
\begin{definitionprop}
\label{defintion:definitinHawkes}
(i) Let for any $t\geq 0$
\begin{equation}
\label{eq:X1}
X_t^{(1)}:=\int_{(0,t]\times \real_+} \ind{\theta_1 \leq \mu} N(dv_1,d\theta_1),
\end{equation}
and for $n \geq 2$,
\begin{equation}
\label{eq:Xn}
X_t^{(n)} := \int_{(0,t]\times \real_+} \int_{(0,v_{n}]\times \real_+} \cdots \int_{(0,v_{2}]\times \real_+} \ind{\theta_1 \leq \mu} \prod_{i=2}^n \ind{\theta_i \leq \Phi(v_i-v_{i-1})} N(dv_1,d\theta_1) \cdots N(dv_n,d\theta_n).
\end{equation}
We set in addition 
\begin{equation}
\label{eq:X}
X_t:=\sum_{n=1}^{+\infty} X_t^{(n)},\end{equation}
where the series converges uniformly (in $t$) on compact sets; that is for any $T>0$,
$$ \lim_{p\to+\infty} \E\left[\sup_{t\in [0,T]} \left|X_t-\sum_{n=1}^{p} X_t^{(n)}\right|\right] =0.$$
(ii) We set the $\mathbb F^X$-predictable process
\begin{equation}
\label{eq:lambda}
\ell_t := \mu + \int_{(0,t)} \Phi(t-r) dX_r =\mu + \sum_{n=1}^{+\infty} \int_{(0,t)} \Phi(t-r) dX_r^{(n)}, \quad t\geq 0
\end{equation}
where $\mathbb F^X:=(\mathcal F^X_t)_{t\geq 0}$, with $\mathcal F^X_t:=\sigma(X_s, \; s\leq t)$.
\end{definitionprop}
\noindent The proof of the convergence of the series \eqref{eq:X} is postponed to Section \ref{section:technicallemmata}.  The resulting process $X$ aims at counting the number of individuals in the population, while the predictable process $\ell$ is the candidate to be the self-exciting intensity of the process $X$. This intensity reads as follows
\begin{align*}
&\ell_t \\
&= \mu + \int_{(0,t)} \Phi(t-r) dX_r \\
&=\mu + \int_{(0,t) \times \real_+} \Phi(t-v_1) \ind{\theta_1 \leq \mu} N(dv_1,d\theta_1) \\
&+ \sum_{n=2}^{+\infty} \int_{(0,t]\times \real_+} \Phi(t-v_n) \int_{(0,v_{n}]\times \real_+} \cdots \int_{(0,v_{2}]\times \real_+} \ind{\theta_1 \leq \mu} \prod_{i=2}^n \ind{\theta_i \leq \Phi(v_i-v_{i-1})} N(dv_1,d\theta_1) \cdots N(dv_n,d\theta_n) \\
&=\mu + \sum_{n=1}^{+\infty} \int_{(0,t]\times \real_+} \Phi(t-v_n) \int_{(0,v_{n}]\times \real_+} \cdots \int_{(0,v_{2}]\times \real_+} \ind{\theta_1 \leq \mu} \prod_{i=2}^n \ind{\theta_i \leq \Phi(v_i-v_{i-1})} N(dv_1,d\theta_1) \cdots N(dv_n,d\theta_n).
\end{align*}
The main result of this section is stated below. Its proof based on Lemmata \ref{eq:CalculX} and \ref{lemma:calcullambda} is postponed to Section \ref{section:proofmain}.

\begin{theorem}
\label{th:almostHawkes}
Let $\mu>0$ and $\Phi$ satisfying Assumption \ref{assumption:Phi}. Recall the stochastic process $X$ and $\ell$ defined (in Definition-Proposition \ref{defintion:definitinHawkes}).  We define the process $M:=(M_t)_{t\geq 0}$ by
\begin{equation}
\label{eq:M}
M_t:= X_t-\int_{(0,t)} \ell_u du, \quad t\geq 0.
\end{equation}
Then $X$ is a $\mathbb{N}$-valued non-decreasing process piecewise constant with predictable intensity process $\ell$, in the sense that process $M$ is a $\mathbb F^N$-martingale (and so a $\mathbb F^H$-martingale as  $M$ is $\mathbb F^H$-adapted and $\mathcal F^H_\cdot \subset \mathcal F^N_\cdot $).
\end{theorem}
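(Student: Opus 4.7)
The plan is to decompose the candidate martingale as $M = \sum_{n \geq 1} M^{(n)}$, where each $M^{(n)}$ is the $\mathbb{F}^N$-martingale obtained by compensating the individual iterated integral $X^{(n)}$, and then to pass the martingale property through the series limit.

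Each $X_t^{(n)}$ is, pointwise in $\omega$, a finite sum of products of indicators over ordered $n$-tuples of atoms of $N$, hence $\mathbb{N}$-valued, non-decreasing and piecewise constant in $t$. Campbell's (Mecke) formula together with Proposition~\ref{prop:Phin} yields $\E[X_t^{(n)}] = \mu \int_0^t \Phi_{n-1}(v)\,dv \leq \mu \|\Phi\|_1^{n-1}$ for $n \geq 2$ (and $\mu t$ for $n=1$). Under Assumption~\ref{assumption:Phi}, this geometric bound makes the series $X_t = \sum_{n\geq 1} X_t^{(n)}$ absolutely convergent in $L^1$ uniformly on compact time intervals, so the limit $X$ inherits all three structural properties stated in the theorem.

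For the compensator I would view the outermost layer of (\ref{eq:Xn}) as a stochastic integral $\int_{(0,t]\times\real_+} Y^{(n)}(v,\theta)\,N(dv,d\theta)$, where the integrand $Y^{(n)}(v,\theta)$ is itself an iterated integral of $N$ restricted to $(0,v)\times\real_+$, and is therefore $\mathbb{F}^N$-predictable in $v$. The standard compensation formula for integrals of predictable integrands against the Poisson measure then yields that
\[ M_t^{(n)} := X_t^{(n)} - \int_0^t\!\!\int_{\real_+} Y^{(n)}(v,\theta)\,d\theta\,dv \]
is an $\mathbb{F}^N$-martingale. Since the only factor in $Y^{(n)}$ depending on $\theta$ is $\ind{\theta \leq \Phi(v-v_{n-1})}$, integrating out $\theta$ contributes a factor $\Phi(v-v_{n-1})$, producing for $n\geq 2$ the pathwise identity $\int_{\real_+} Y^{(n)}(v,\theta)\,d\theta = \int_0^v \Phi(v-r)\,dX_r^{(n-1)}$, and simply $\mu$ for $n=1$; I expect this explicit computation to be the content of Lemmata~\ref{eq:CalculX} and~\ref{lemma:calcullambda}. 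Summing over $n$ then gives, at least formally,
\[ \sum_{n\geq 1} M_t^{(n)} = X_t - \int_0^t\!\Big(\mu + \int_0^v \Phi(v-r)\,dX_r\Big)dv = X_t - \int_0^t \ell_v\,dv = M_t. \]

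The main obstacle is justifying the exchange of the infinite summation with the conditional expectation in the martingale property. The geometric estimate above gives $\sum_n \E[X_t^{(n)}] \leq \mu t + \mu \|\Phi\|_1/(1-\|\Phi\|_1)$, and a symmetric bound holds for the compensators, so $\sum_n M^{(n)}_t$ converges absolutely in $L^1$ uniformly on $[0,T]$. This allows one to verify the martingale identity for the truncated sums $\sum_{n=1}^{p} M^{(n)}$, which is immediate by linearity, and then to pass to the limit $p\to+\infty$ by dominated convergence, yielding the claim for $M$.
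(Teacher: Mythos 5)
Your argument is correct, but it follows a genuinely different route from the paper's. The paper proves the martingale property by computing both sides of $\E\left[X_t-X_s\,\middle|\,\F^N_{s-}\right]=\int_s^t\E\left[\ell_r\,\middle|\,\F^N_{s-}\right]dr$ separately: Lemma \ref{eq:CalculX} and Lemma \ref{lemma:calcullambda} each reduce, via the recursive formula of Lemma \ref{lemma:Qnu} and the convolution identity of Lemma \ref{lemma:magic}, to the common expression $\int_s^t(\mu+h_u^s)\,du+\int_s^t\int_s^u\Psi(u-r)(\mu+h_r^s)\,dr\,du$ involving the renewal kernel $\Psi=\sum_n\Phi_n$. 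You instead compensate each generation directly: writing the outermost layer of $X^{(n)}$ as $\int_{(0,t]\times\real_+}Y^{(n)}(v,\theta)\,N(dv,d\theta)$ with $Y^{(n)}$ predictable, integrating out $\theta$ yields the pathwise identity $\int_{\real_+}Y^{(n)}(v,\theta)\,d\theta=\int_{(0,v)}\Phi(v-r)\,dX_r^{(n-1)}$ (and $\mu$ for $n=1$), so the compensators telescope into $\int_0^t\ell_v\,dv$ after summing over $n$ by monotone convergence. This is cleaner and exposes the structural reason the theorem holds — the $\theta$-thinning in generation $n$ reproduces exactly the convolution of $\Phi$ against generation $n-1$ — whereas the paper's computation buys, as a by-product, the explicit conditional-expectation formulas in terms of $\Psi$ and $h^s$, which have independent interest. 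Your handling of the limit is sound: the geometric bound $\E[X_T^{(n)}]\leq\mu T\|\Phi\|_1^{n-1}$ gives $L^1$-convergence of the partial sums, and the martingale property passes to the $L^1$-limit since conditional expectation is an $L^1$-contraction. Two small remarks: your guess that Lemmata \ref{eq:CalculX} and \ref{lemma:calcullambda} contain the pathwise compensator identity is not accurate (they compute conditional expectations), but this does not affect your own argument; and for the piecewise-constancy of $X$ you should note that a.s. only finitely many generations are nonzero on a compact interval, which follows from $\sum_n X_T^{(n)}<+\infty$ a.s. together with each term being a non-negative integer.
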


\begin{discussion}
\label{discussion:finale}
In other words $X$ would be a Hawkes process if it were a counting process, unless it has the same expectation of a Hawkes process. Indeed
\begin{enumerate}
\item some atoms generates simultaneous jumps : any atom $(t_0,\theta_0)$ of $N$ with $\theta_0 \leq \mu$ will generate a jump for $X^{(1)}$ and for all $X^{(n)}$ who have $t_0$ as an ancestor so $X_{t_0}-X_{{t_0}-}$ may be larger than one. 
\item some atoms are ignored by $X$ : by construction any atom $(t_0,\theta_0)$ of $N$ with $\theta > \|\Phi\|_\infty$ is ignored by the process, whereas the area of decision $\ind{\theta\leq \lambda_t}$ is unbounded in the $\theta$-variable for a linear Hawkes process. 
\end{enumerate}
This example leads to a question. More specifically, the more intricate structure of the coefficients in Theorem \ref{th:almostHawkes} for the pseudo-chaotic expansion of a Hawkes process as a sum and differences of indicator functions suggests a necessary algebraic structure with respect to the time variable on the coefficients of the expansion to guarantee the counting-feature of the process. We leave this issue for future research. 
\end{discussion}

\noindent Before handling  in Section  \ref{section:proofmain} the proof of  Theorem \ref{th:almostHawkes},  we start with some useful technical   lemmata.

\subsection{Technical results and proofs}\label{section:technicallemmata}

\begin{lemma}
\label{lemma:magic}
Let $f$ in $L_1(\real_+;dt)$. For any $n\in \mathbb{N}$ with $n \geq 3$, and for any $0\leq s \leq t$,
\begin{equation}
\label{eq:magic}
\int_s^t \int_s^u \Phi_{n-1}(t-r) f(r) dr du = \int_s^{t} \int_s^{v_n} \int_s^{v_{n-1}} \cdots \int_s^{v_2} \prod_{i=2}^{n} \Phi(v_{i}-v_{i-1}) f({v_1}) dv_1 \cdots dv_{n}.
\end{equation} 
\end{lemma}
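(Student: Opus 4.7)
The identity asserts a ``convolution as simplex integral'' formula: the $(n-1)$-fold convolution $\Phi_{n-1}$ unfolds as an integral of $\prod_{i=2}^n \Phi(v_i - v_{i-1})$ over the ordered simplex in the intermediate times. My plan is to apply Fubini on the right-hand side to isolate the ``extreme'' variables $v_1$ and $v_n$, and then identify the resulting middle integral with $\Phi_{n-1}$ by induction on $n$.

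First I would relabel $v_1 = r$ and $v_n = u$ and use Fubini to recast the right-hand side as
\begin{equation*}
\int_s^t du \int_s^u dr\; f(r)\; K_n(u,r),
\end{equation*}
where
\begin{equation*}
K_n(u,r) := \int_{r \leq v_2 \leq \cdots \leq v_{n-1} \leq u} \Phi(v_2 - r)\Phi(v_3 - v_2)\cdots\Phi(u - v_{n-1})\, dv_2 \cdots dv_{n-1}.
\end{equation*}
Comparing with the left-hand side (after swapping the order of integration there as well), the task reduces to showing $K_n(u, r) = \Phi_{n-1}(u-r)$.

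This claim I would prove by induction on $n \geq 2$. The base case $n=2$ is immediate: there is no intermediate variable and $K_2(u, r) = \Phi(u-r) = \Phi_1(u-r)$. For the inductive step, I would peel off the outermost intermediate variable $v_{n-1} \in [r, u]$: the remaining $(n-3)$-fold integral over $v_2, \ldots, v_{n-2}$ constrained by $v_{n-2} \leq v_{n-1}$ is, by definition, $K_{n-1}(v_{n-1}, r)$, so that
\begin{equation*}
K_n(u, r) = \int_r^u \Phi(u - v_{n-1})\; K_{n-1}(v_{n-1}, r)\; dv_{n-1}.
\end{equation*}
The inductive hypothesis gives $K_{n-1}(v_{n-1}, r) = \Phi_{n-2}(v_{n-1} - r)$, and the substitution $w = v_{n-1} - r$ yields $\int_0^{u-r} \Phi(u-r-w)\Phi_{n-2}(w)\, dw$, which is exactly $\Phi_{n-1}(u-r)$ by the recursive definition of $\Phi_{n-1}$ from Proposition \ref{prop:Phin}.

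The main obstacle is not analytic but organizational: one must carefully track the Fubini swaps and the simplex bookkeeping at the inductive step so that the inner integral for fixed $v_{n-1}$ is correctly identified with $K_{n-1}(v_{n-1}, r)$ rather than something with a shifted range, and so that the substitution $w = v_{n-1} - r$ aligns with the standard convolution definition. Once this matching is clean, the final substitution produces $\Phi_{n-1}(u - r)$, and plugging back into the Fubini reduction yields the claimed identity.
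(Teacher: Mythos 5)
Your proof is correct, but it takes a genuinely different route from the paper. The paper's argument is global and transform-based: it sets $F(u):=\int_s^u \Phi_{n-1}(u-r)f(r)\,dr=(\Phi_{n-1}\ast\tilde f)(u)$ with $\tilde f:=f\,\mathbf 1_{[s,\infty)}$, writes the right-hand side's inner $(n-1)$-fold integral as the iterated convolution $\Phi\ast\cdots\ast\Phi\ast\tilde f$, and concludes that the two agree a.e.\ because their Fourier transforms both equal $\mathfrak F(\Phi)^{n-1}\mathfrak F(\tilde f)$; integrating over $[s,t]$ then gives the identity. You instead avoid Fourier analysis entirely: after a Fubini reduction you isolate the simplex kernel $K_n(u,r)$ and show $K_n(u,r)=\Phi_{n-1}(u-r)$ by induction, peeling off the outermost intermediate variable and invoking the recursion $\Phi_{n-1}=\Phi\ast\Phi_{n-2}$ of Proposition \ref{prop:Phin}. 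Your version is more elementary and self-contained (only Fubini--Tonelli for nonnegative/$L^1$ integrands and the defining recursion are used, with no appeal to injectivity of the Fourier transform on $L^1$), at the cost of slightly more bookkeeping; the paper's version is shorter once the convolution/transform machinery is granted. Two small points. First, no swap of integration order is actually needed on the left-hand side: it is already in the form $\int_s^t du\int_s^u dr\, f(r)\,\Phi_{n-1}(u-r)$. Second, both you and the paper's own proof silently read the kernel on the left-hand side as $\Phi_{n-1}(u-r)$ rather than the $\Phi_{n-1}(t-r)$ printed in \eqref{eq:magic}; this is evidently a typo in the statement (the lemma is applied later with argument $u-r$), but it is worth flagging explicitly, since with $\Phi_{n-1}(t-r)$ the displayed identity would be false.
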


\begin{proof}
For $g$ a mapping, let $\mathfrak F(g)$ the Fourier transform of $g$.
Fix $s\geq 0$ and $n\geq 3$. Let 
$$ F(u):=\int_s^u \Phi_{n-1}(u-r) f(r) dr, \; u\geq s$$
so that $F = \Phi_{n-1} \ast \tilde f$, with $\tilde f(v):= f(v) \ind{v\geq s}$. We have that 
$$ \mathfrak F(F) = \mathfrak F(\Phi_{n-1}) \mathfrak F(\tilde f) = \mathfrak F(\Phi)^{n-1} \mathfrak F(\tilde f).$$
In addition by definition of mappings $\Phi_i$ (see Relation (\ref{eq:Phin})), each $\Phi_{i}$ is the $i$th convolution of $\Phi$ with itself; hence $\mathfrak F(\Phi_{n-1}) = (\mathfrak F(\Phi))^{n-1}$.
Let : 
$$ G(u):= \int_s^{u} \Phi(u-v_{n-1}) \int_s^{v_{n-1}} \Phi(v_{n-1}-v_{n-2}) \cdots \int_s^{v_2} \Phi(v_{2}-v_1) f({v_1}) dv_1 \cdots dv_{n-1},$$
we immediately get that $\mathfrak F(G) = (\mathfrak F(\Phi))^{n-2} \mathfrak F(\Phi) \mathfrak F(\tilde f) = (\mathfrak F(\Phi))^{n-1} \mathfrak F(\tilde f) = \mathfrak F(F)$. Using the inverse Fourier transform (on the left) we get that $ F(u) = G(u)$ for a.e. $u$ 
leading to $ \int_s^t F(u) du = \int_s^t G(u) du $ which is Relation (\ref{eq:magic}).
\end{proof}

\noindent Lemma \ref{lemma:magic}  allows one to prove  Proposition \ref{defintion:definitinHawkes}, namely to prove that 
the series\\
$ X_t=\sum_{n=1}^{+\infty} X_t^{(n)}$
 converges uniformly (in $t$) on compact sets; that is for any $T>0$,
$$ \lim_{p\to+\infty} \E\left[\sup_{t\in [0,T]} \left|X_t-\sum_{n=1}^{p} X_t^{(n)}\right|\right] =0.$$

\begin{proof}
Set $T>0$. For $p\geq 2$, let $S_p:=\sum_{n=1}^p X^{(n)}$. As each $X^{(n)}$ process is non-negative as a counting process we have that : 
\begin{align*}
&\E\left[\sup_{t \in[0,T]} \left|X_t-S_p(t)\right|\right] \\
&= \E\left[\sup_{t \in[0,T]} \left|\sum_{n=p+1}^{+\infty} X^{(n)}_t \right|\right] \\
&= \sum_{n=p+1}^{+\infty} \E\left[X^{(n)}_T \right] \\
&= \mu \sum_{n=p+1}^{+\infty} \int_0^T \int_0^{v_{n}} \cdots \int_0^{v_{2}} \prod_{i=2}^n \Phi(v_i-v_{i-1}) dv_1 \cdots dv_n \\
&= \mu \sum_{n=p+1}^{+\infty} \int_0^T \int_0^t \Phi_{n-1}(T-r) dr dt, \quad \textrm{ by Lemma \ref{lemma:magic}}\\
&\leq \mu T \sum_{n=p}^{+\infty} \|\Phi_{n}\|_1 = \mu T \frac{\|\Phi\|_1^p}{1-\|\Phi\|_1} \underset{p\to+\infty}{\longrightarrow} 0, \quad \textrm{ by Proposition \ref{prop:Phin}}.
\end{align*}
\end{proof}

\begin{lemma}
\label{lemma:Qnu}
For $n\geq 3$, $0 \leq s \leq u$ we set 
\begin{align*}
&\hspace{-3em} Q(n,u)\\
&\hspace{-3em} :=\E_{s-}\left[\int_{(0,u]} \Phi(u-r) dX_r^{(n-1)} \right]\\
&\hspace{-3em} =\E_{s-}\left[\int_{(0,u]\times \real_+} \hspace{-3em} \Phi(u-v_{n-1}) \int_{(0,v_{n-1}]\times \real_+} \hspace{-1em} \cdots \int_{(0,v_{2}]\times \real_+} \ind{\theta_1 \leq \mu} \prod_{i=2}^{n-1} \ind{\theta_i \leq \Phi(v_i-v_{i-1})} N(dv_1,d\theta_1) \cdots N(dv_{n-1},d\theta_{n-1}) \right].
\end{align*}
We have
\begin{align}
\label{eq:Qnu}
Q(n,u)&=h_u^{s,(n-1)} \nonumber\\
&+ \sum_{i=1}^{n-2} \int_s^u \cdots \int_s^{v_{n-i+1}^*} \Phi(u-v_{n-1}) \prod_{k=n-i+1}^{n-1} \Phi(v_k-v_{k-1}) h_{v_{n-i}}^{s,(n-i-1)} dv_{n-i} \ldots dv_{n-1} \nonumber\\
&+ \mu \int_s^u \int_s^{v_{n-1}} \cdots \int_s^{v_2} \Phi(u-v_{n-1}) \prod_{k=2}^{n-1} \Phi(v_k-v_{k-1}) dv_1\ldots dv_{n-1},
\end{align}
where $v_{n-i+1}^* := v_{n-i+1}$ for $i\neq 1$ and $v_{n-i+1}^* := u$ for $i=1$. An explicit computation gives that Relation (\ref{eq:Qnu}) is valid for $n=2$ using Convention \ref{convention:sums}.
\end{lemma}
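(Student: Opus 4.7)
The plan is to evaluate $Q(n,u)$ by partitioning the ordered domain $\{0<v_{1}<\cdots<v_{n-1}\leq u\}$ of the $(n-1)$-fold iterated Poisson integral defining $Q(n,u)$ according to the cut-index $j^{*}:=\min\{k\in\{1,\ldots,n-1\}:v_{k}>s\}$, with the convention $j^{*}=n$ if all $v_{k}\leq s$. The monotonicity of the tuple makes this a disjoint partition indexed by $j^{*}\in\{1,\ldots,n\}$, and I will show that the three groups of summands in (\ref{eq:Qnu}) arise respectively from $j^{*}=n$ (past-only), $j^{*}\in\{2,\ldots,n-1\}$ (mixed), and $j^{*}=1$ (future-only).

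For $j^{*}=n$, the entire integrand is $\mathcal{F}^{N}_{s-}$-measurable and $\E_{s-}$ acts as the identity, producing $\int_{(0,s]}\Phi(u-r)\,dX^{(n-1)}_{r}=h^{s,(n-1)}_{u}$. For $j^{*}=1$, I use that $N$ restricted to $(s,\infty)\times\real_{+}$ is independent of $\mathcal{F}^{N}_{s-}$ with Lebesgue intensity, so a conditional Campbell/Mecke-type formula replaces each $N(dv_{k},d\theta_{k})$ by $dv_{k}\,d\theta_{k}$; integrating out $\theta_{1}$ against $\ind{\theta_{1}\leq\mu}$ gives the factor $\mu$, and each subsequent $\theta_{k}$ against $\ind{\theta_{k}\leq\Phi(v_{k}-v_{k-1})}$ gives $\Phi(v_{k}-v_{k-1})$, yielding the last summand of (\ref{eq:Qnu}). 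For intermediate $j^{*}\in\{2,\ldots,n-1\}$, I set $i:=n-j^{*}\in\{1,\ldots,n-2\}$ and push $\E_{s-}$ through the future Poisson integrations: the $\theta_{j^{*}}$-integration emits the bridging weight $\Phi(v_{j^{*}}-v_{j^{*}-1})$ that couples the past and future blocks, and the later $\theta_{k}$-integrations ($k>j^{*}$) emit $\Phi(v_{k}-v_{k-1})$. The surviving past integral then reads $\int_{(0,s]}\Phi(v_{j^{*}}-r)\,dX^{(j^{*}-1)}_{r}=h^{s,(n-i-1)}_{v_{n-i}}$, while the future block yields the nested Lebesgue integral $\int_{s}^{u}\int_{s}^{v_{n-1}}\cdots\int_{s}^{v_{n-i+1}}$ against $\Phi(u-v_{n-1})\prod_{k=n-i+1}^{n-1}\Phi(v_{k}-v_{k-1})$; the convention $v_{n-i+1}^{*}=u$ for $i=1$ encodes that when only $v_{n-1}$ is in the future, its upper bound is $u$ rather than another integration variable.

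The main technical care concerns the bookkeeping: the past block's natural upper limit $v_{j^{*}}$ gets truncated to $s$ because $v_{j^{*}-1}\leq s<v_{j^{*}}$, and the conditional Fubini step requires $L^{1}$-integrability of the iterated counts, which follows from the bound $\E[X^{(n-1)}_{T}]\leq\mu T\,\|\Phi\|_{1}^{n-2}$ (via Proposition~\ref{prop:Phin} and the proof of Proposition~\ref{defintion:definitinHawkes}). Finally the case $n=2$ is checked separately: the sum $\sum_{i=1}^{0}$ is empty by Convention~\ref{convention:sums} and the formula reduces to $Q(2,u)=h^{s,(1)}_{u}+\mu\int_{s}^{u}\Phi(u-v_{1})\,dv_{1}$, obtained by a single split of the variable $v_{1}$ at the level $s$.
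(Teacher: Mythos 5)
Your proposal is correct, but it is organized differently from the paper's proof. The paper peels off only the outermost variable: splitting $\int_{(0,u]}$ at $s$ and projecting the single Poisson integral over $(s,u]$ onto its compensator yields the one-step recursion $Q(n,u)=h_u^{s,(n-1)}+\int_s^u\Phi(u-v_{n-1})\,Q(n-1,v_{n-1})\,dv_{n-1}$, and the closed form (\ref{eq:Qnu}) is then obtained by induction on $n$. You instead partition the ordered simplex once and for all by the cut index $j^{*}=\min\{k:v_k>s\}$ and evaluate each piece with a multivariate conditional Mecke/Campbell formula; the three groups of terms in (\ref{eq:Qnu}) then correspond bijectively to $j^{*}=n$, $j^{*}\in\{2,\ldots,n-1\}$ and $j^{*}=1$, with your identification $i=n-j^{*}$, the bridging factor $\Phi(v_{j^{*}}-v_{j^{*}-1})$ being absorbed into $h^{s,(n-i-1)}_{v_{n-i}}$, and the integrability needed for the conditional Fubini step correctly controlled by $\E[X^{(n-1)}_T]\le\mu T\|\Phi\|_1^{n-2}$. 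Your route makes the combinatorial origin of each summand transparent and dispenses with the induction, at the cost of invoking the multivariate (rather than univariate) compensation identity for the off-diagonal iterated Poisson integral; the paper's recursion is the weaker tool and, moreover, is reused as an intermediate identity in the proofs of Lemmata \ref{eq:CalculX} and \ref{lemma:calcullambda}, which is a practical reason the authors keep it explicit. Both arguments handle $n=2$ by the same direct check via Convention \ref{convention:sums}.
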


\begin{proof}
Using Fubini's theorem\footnote{here Fubini's theorem is used pathwise  as the integral against $N$ are finite-a.e.} we have  
\begin{align*}
&\hspace{-3em} Q(n,u)\\
&\hspace{-3em} = \int_{(0,s]\times \real_+} \hspace{-3em} \Phi(u-v_{n-1}) \int_{(0,v_{n-1}]\times \real_+} \hspace{-1em} \cdots \int_{(0,v_{2}]\times \real_+} \ind{\theta_1 \leq \mu} \prod_{i=2}^{n-1} \ind{\theta_i \leq \Phi(v_i-v_{i-1})} N(dv_1,d\theta_1) \cdots N(dv_{n-1},d\theta_{n-1}) \\
&\hspace{-3em} + \int_s^u \Phi(u-v_{n-1}) \E_{s-}\left[\int_{(0,v_{n-1}]\times \real_+} \hspace{-1em} \cdots \int_{(0,v_{2}]\times \real_+} \ind{\theta_1 \leq \mu} \prod_{i=2}^{n-1} \ind{\theta_i \leq \Phi(v_i-v_{i-1})} N(dv_1,d\theta_1) \cdots N(dv_{n-1},d\theta_{n-1}) \right] dv_{n-1} \\
&\hspace{-3em}=\int_{(0,s]\times \real_+} \hspace{-3em} \Phi(u-v_{n-1}) \int_{(0,v_{n-1}]\times \real_+} \hspace{-1em} \cdots \int_{(0,v_{2}]\times \real_+} \ind{\theta_1 \leq \mu} \prod_{i=2}^{n-1} \ind{\theta_i \leq \Phi(v_i-v_{i-1})} N(dv_1,d\theta_1) \cdots N(dv_{n-1},d\theta_{n-1}) \\
&\hspace{-3em} + \int_s^u \Phi(u-v_{n-1}) \E_{s-}\left[\int_{(0,v_{n-1}]\times \real_+} \Phi(v_{n-1}-v_{n-2}) \int_{(0,v_{n-2}]\times \real_+} \hspace{-1em} \cdots \int_{(0,v_{2}]\times \real_+} \right.\\
&\left. \hspace{10em} \ind{\theta_1 \leq \mu} \prod_{i=2}^{n-2} \ind{\theta_i \leq \Phi(v_i-v_{i-1})} N(dv_1,d\theta_1) \cdots N(dv_{n-1},d\theta_{n-1}) \right] dv_{n-1} \\
&\hspace{-3em}=\int_{(0,s]\times \real_+} \hspace{-3em} \Phi(u-v_{n-1}) \int_{(0,v_{n-1}]\times \real_+} \hspace{-1em} \cdots \int_{(0,v_{2}]\times \real_+} \ind{\theta_1 \leq \mu} \prod_{i=2}^{n-1} \ind{\theta_i \leq \Phi(v_i-v_{i-1})} N(dv_1,d\theta_1) \cdots N(dv_{n-1},d\theta_{n-1}) \\
&\hspace{-3em} + \int_s^u \Phi(u-v_{n-1}) Q(n-1,v_{n-1}) dv_{n-1} \\
&\hspace{-3em} =h_u^{s,(n-1)} + \int_s^u \Phi(u-v_{n-1}) Q(n-1,v_{n-1}) dv_{n-1}.
\end{align*}
So we have proved that 
$$ Q(n,u) = h_u^{s,(n-1)} + \int_s^u \Phi(u-v_{n-1}) Q(n-1,v_{n-1}) dv_{n-1}. $$
We then deduce by induction that 
\begin{align*}
Q(n,u)&=h_u^{s,(n-1)} \\
&+ \sum_{i=1}^{n-2} \int_s^u \cdots \int_s^{v_{n-i+1}^*} \Phi(u-v_{n-1}) \prod_{k=n-i+1}^{n-1} \Phi(v_k-v_{k-1}) h_{v_{n-i}}^{s,(n-i-1)} dv_{n-i} \ldots dv_{n-1} \\
&+ \mu \int_s^u \int_s^{v_{n-1}} \cdots \int_s^{v_2} \Phi(u-v_{n-1}) \prod_{k=2}^{n-1} \Phi(v_k-v_{k-1}) dv_1\ldots dv_{n-1}.
\end{align*}
Indeed, assuming the previous relation is true for $Q(n,u)$ for a given $n\geq 3$ and for any $u$; we have
\begin{align*}
&Q(n+1,u) \\
&= h_u^{s,(n)} + \int_s^u \Phi(u-v_{n}) Q(n,v_{n}) dv_{n} \\
&= h_u^{s,(n)} + \int_s^u \Phi(u-v_{n}) h_{v_n}^{s,(n-1)} dv_{n} \\
&+ \int_s^u \Phi(u-v_{n}) \left[  \sum_{i=1}^{n-2} \int_s^{v_n} \cdots \int_s^{v_{n-i+1}} \Phi(v_n-v_{n-1}) \prod_{k=n-i+1}^{n-1} \Phi(v_k-v_{k-1}) h_{v_{n-i}}^{s,(n-i-1)} dv_{n-i} \ldots dv_{n-1} \right] dv_{n} \\
&+ \mu \int_s^u \Phi(u-v_{n}) \int_s^{v_{n-1}}  \Phi(v_n-v_{n-1}) \prod_{k=2}^{n-1} \Phi(v_k-v_{k-1}) dv_1\ldots dv_{n-1} dv_{n} \\
&= h_u^{s,(n)} + \int_s^u \Phi(u-v_{n}) h_{v_n}^{s,(n-1)} dv_{n} \\
&+ \sum_{i=1}^{n-2} \int_s^u \Phi(u-v_{n}) \int_s^{v_n} \cdots \int_s^{v_n-i+1} \prod_{k=n-i+1}^{n} \Phi(v_k-v_{k-1}) h_{v_{n-i}}^{s,(n-i-1)} dv_{n-i} \ldots dv_{n-1} dv_{n} \\
&+ \mu \int_s^u \int_s^{v_{n-1}} \cdots \int_s^{v_2} \Phi(u-v_{n}) \prod_{k=2}^{n} \Phi(v_k-v_{k-1}) dv_1\ldots dv_{n-1} dv_{n} \\
&= h_u^{s,(n)} \\
&+ \sum_{i=0}^{n-2} \int_s^u \Phi(u-v_{n}) \int_s^{v_n} \cdots \int_s^{v_{n-i+1}} \Phi(v_n-v_{n-1}) \prod_{k=n-i+1}^{n} \Phi(v_k-v_{k-1}) h_{v_{n-i}}^{s,(n-i-1)} dv_{n-i} \ldots dv_{n-1} dv_{n} \\
&+ \mu \int_s^u \int_s^{v_{n-1}} \cdots \int_s^{v_2} \Phi(u-v_{n}) \prod_{k=2}^{n} \Phi(v_k-v_{k-1}) dv_1\ldots dv_{n-1} dv_{n},
\end{align*}
where for $i=0$ we use Convention \ref{convention:sums}. Hence
\begin{align*}
&Q(n+1,u) \\
&= h_u^{s,(n)} \\
&+ \sum_{j=1}^{(n+1)-2} \int_s^u \Phi(u-v_{n}) \int_s^{v_n} \cdots \int_s^{v_{(n+1)-j+1}^*} \prod_{k=(n+1)-j+1}^{n+1} \Phi(v_k-v_{k-1}) h_{v_{n+1-j}}^{s,((n+1)-j-1)} dv_{(n+1)-j} \ldots dv_{n-1} dv_{n} \\
&+ \mu \int_s^u \int_s^{v_{n-1}} \cdots \int_s^{v_2} \Phi(u-v_{n}) \prod_{k=2}^{(n+1)-1} \Phi(v_k-v_{k-1}) dv_1\ldots dv_{(n+1)-1},
\end{align*}
which gives the result.
\end{proof}

\subsection{Proof of Theorem \ref{th:almostHawkes}}
\label{section:proofmain}

The proof consists in showing that the process $M$ defined by (\ref{eq:M}) is a $\mathbb F^N$-martingale, that is for any $0\leq s\leq t$, 
\begin{equation}
\label{eq:martingale}
\E_{s-}\left[X_t-X_s\right] = \int_s^t \E_{s-}[\ell_r] dr,
\end{equation}
where for simplicity $\E_{s-}[\cdot] := \E[\cdot \vert \mathcal F_{s-}^N]$.
This result is a direct consequence of Lemma \ref{eq:CalculX} and \ref{lemma:calcullambda} below in which we compute both terms in (\ref{eq:martingale}). To this end we introduce the following notation

\begin{notation}
\label{eq:hn}
Let $s \geq 0$, $v\geq s$ and $n\geq 1$, we set 
$$ h_v^s:=\sum_{n=1}^{+\infty} h_v^{s,(n)},$$
with
\begin{align*}
&h_v^{s,(n)}\\
&:= \int_{(0,s)} \Phi(v-v_n) dX_{v_n}^{(n)} \\
&=\int_{(0,s)\times \real_+}  \Phi(v-v_n) \int_{(0,v_{n}]\times \real_+} \cdots \int_{(0,v_{2}]\times \real_+} \ind{\theta_1 \leq \mu} \prod_{i=2}^n \ind{\theta_i \leq \Phi(v_i-v_{i-1})} N(dv_1,d\theta_1) \cdots N(dv_n,d\theta_n).
\end{align*} 
\end{notation}
\noindent Lemma \ref{eq:CalculX} first  compute the left-hand side of  in (\ref{eq:martingale}). 
\begin{lemma}
\label{eq:CalculX}
For any $0 \leq s \leq t$ we have that 
\begin{align}
\E_{s-}\left[\int_{(s,t]} dX_r \right] = \int_s^t (\mu+h_u^s) du + \int_s^t \int_s^{u}  \Psi(u-r) (\mu + h_{r}^{s})  dr du.
\end{align}
\end{lemma}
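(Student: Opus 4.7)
The strategy is generation-by-generation: write $X_t - X_s = \sum_{n\geq 1}(X^{(n)}_t - X^{(n)}_s)$, swap $\E_{s-}$ with the series (justified by the uniform bound $\E[X^{(n)}_T]\leq \mu T\|\Phi\|_1^{n-1}$ already exploited in the proof of Proposition-Definition \ref{defintion:definitinHawkes}), and compute each summand separately. For $n=1$ one reads off $\E_{s-}[X^{(1)}_t - X^{(1)}_s] = \int_s^t\int_0^{+\infty}\ind{\theta_1\leq\mu}\,d\theta_1\,dv_1 = \mu(t-s)$. For $n\geq 2$ I would isolate the outermost atom $(v_n,\theta_n)$ of the iterated Poisson integral and apply conditional Fubini: on the region $(s,t]\times\real_+$ the outer $N(dv_n,d\theta_n)$ produces an intensity $dv_n\,d\theta_n$ under $\E_{s-}$, and the $\theta_n$-integration consumes $\ind{\theta_n\leq\Phi(v_n-v_{n-1})}$ into the factor $\Phi(v_n-v_{n-1})$. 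The resulting inner $(n-1)$-fold iterated integral is by construction $\int_{(0,v_n]}\Phi(v_n-r)\,dX^{(n-1)}_r$, so $\E_{s-}[X^{(n)}_t-X^{(n)}_s] = \int_s^t Q(n,u)\,du$ with $Q(n,u)$ given by Lemma \ref{lemma:Qnu}.

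\textbf{Regrouping the terms.} Once Lemma \ref{lemma:Qnu} is injected, the problem reduces to matching $\int_s^t\mu\,du + \sum_{n\geq 2}\int_s^t Q(n,u)\,du$ against the target, separating the three families appearing in $Q(n,u)$. The leading family $\sum_{n\geq 2}\int_s^t h_u^{s,(n-1)}\,du$ telescopes to $\int_s^t h_u^s\,du$ by Notation \ref{eq:hn}. For the purely deterministic $\mu$-family, I would rename the outer variable $u$ as $v_n$, which turns the kernel $\Phi(u-v_{n-1})\prod_{k=2}^{n-1}\Phi(v_k-v_{k-1})$ into $\prod_{k=2}^n\Phi(v_k-v_{k-1})$, recasting the $n$-fold iterated integral exactly as the right-hand side of Lemma \ref{lemma:magic} with $f\equiv 1$. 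The lemma collapses it into $\mu\int_s^t\int_s^u\Phi_{n-1}(u-r)\,dr\,du$, and summation over $n\geq 2$ together with $\Psi = \sum_{m\geq 1}\Phi_m$ yields $\mu\int_s^t\int_s^u\Psi(u-r)\,dr\,du$. The middle family indexed by $(n,i)$ with $1\leq i\leq n-2$ is then handled by the same trick after the relabelling $j := n-i$: for fixed $j\geq 2$, placing $v_n = u$ recasts the integrand as $\prod_{k=j+1}^n\Phi(v_k-v_{k-1})\,h_{v_j}^{s,(j-1)}$, and a second application of Lemma \ref{lemma:magic} with $f(\cdot) = h_\cdot^{s,(j-1)}$ gives $\int_s^t\int_s^u\Phi_{n-j}(u-r)\,h_r^{s,(j-1)}\,dr\,du$. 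Summing first over $n\geq j+1$ (producing $\int_s^t\int_s^u\Psi(u-r)\,h_r^{s,(j-1)}\,dr\,du$) and then over $j\geq 2$ assembles, via Notation \ref{eq:hn}, into $\int_s^t\int_s^u\Psi(u-r)\,h_r^s\,dr\,du$. Adding the $n=1$ contribution and regrouping delivers $\int_s^t(\mu+h_u^s)\,du + \int_s^t\int_s^u\Psi(u-r)(\mu+h_r^s)\,dr\,du$, as announced.

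\textbf{Main obstacle.} No single step is analytically deep, but the bookkeeping is delicate: one must swap two nested infinite sums (an outer $\sum_{n\geq 2}$ and an inner $\sum_{i=1}^{n-2}$, reindexed into $\sum_{j\geq 2}\sum_{n\geq j+1}$) with several Fubini-type exchanges between iterated integrals, the conditional expectation $\E_{s-}$, and the defining series of $X$, $h^s$, and $\Psi$. Absolute convergence at every exchange is supplied by two structural estimates that are already in the paper: $\sum_n\|\Phi_n\|_1 = \|\Psi\|_1 = \|\Phi\|_1/(1-\|\Phi\|_1) < \infty$ from Proposition \ref{prop:Phin} under Assumption \ref{assumption:Phi}, and $\E[X^{(n)}_T]\leq \mu T\|\Phi\|_1^{n-1}$. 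Once these controls are locked in, the remaining content is essentially the algebraic pairing of the time-ordered iterated integrals emerging from Lemma \ref{lemma:Qnu} with the convolution kernels $\Phi_n$ produced by Lemma \ref{lemma:magic}.
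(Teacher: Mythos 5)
Your proposal is correct and follows essentially the same route as the paper's proof: decompose by generation, reduce $\E_{s-}[X^{(n)}_t-X^{(n)}_s]$ to $\int_s^t Q(n,u)\,du$ via conditional Fubini, expand $Q(n,u)$ with Lemma \ref{lemma:Qnu}, convert the time-ordered iterated integrals into convolutions $\Phi_i$ via Lemma \ref{lemma:magic}, and reassemble the double sum into $\Psi$ and $h^s$ (your reindexing $j=n-i$ is the same manipulation as the paper's display (\ref{eq:tempdoublesum}) up to a shift). Your explicit attention to the absolute-convergence justifications for the interchanges is if anything slightly more careful than the paper's write-up.
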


\begin{proof}
We have :
\begin{equation}
\label{eq:cond1}
\E_{s-}\left[\int_{(s,t]} dX_r^{(1)} \right] = \E_{s-}\left[\int_{(s,t]\times \real_+} \ind{\theta_1 \leq \mu} N(dv_1,d\theta_1)\right]=\mu (t-s).
\end{equation}

Let $n\geq 2$.

\begin{align*}
&\hspace{-5em}\E_{s-}\left[\int_{(s,t]} dX_r^{(n)} \right] \\
&\hspace{-5em}=\E_{s-}\left[\int_{(s,t]\times \real_+} \int_{(0,v_{n}]\times \real_+} \cdots \int_{(0,v_{2}]\times \real_+} \ind{\theta_1 \leq \mu} \prod_{i=2}^n \ind{\theta_i \leq \Phi(v_i-v_{i-1})} N(dv_1,d\theta_1) \cdots N(dv_n,d\theta_n) \right] \\
&\hspace{-5em} =\int_s^t \E_{s-}\left[\int_{(0,v_{n}]\times \real_+} \hspace{-3em} \Phi(v_n-v_{n-1}) \int_{(0,v_{n-1}]\times \real_+} \hspace{-1em} \cdots \int_{(0,v_{2}]\times \real_+} \ind{\theta_1 \leq \mu} \prod_{i=2}^{n-1} \ind{\theta_i \leq \Phi(v_i-v_{i-1})} N(dv_1,d\theta_1) \cdots N(dv_{n-1},d\theta_{n-1}) \right] dv_n\\
&\hspace{-5em} =\int_s^t Q(n,v_n) dv_n.
\end{align*}
Hence, by Lemma \ref{lemma:Qnu},
\begin{align}
\label{eq:cond1}
\E_{s-}\left[\int_{(s,t]} dX_r^{(n)} \right] &=\int_s^t h_{v_n}^{s,(n-1)} dv_n \nonumber\\
&+\sum_{i=1}^{n-2} \int_s^t \int_s^{v_n} \cdots \int_s^{v_{n-i+1}} \prod_{k=n-i+1}^{n} \Phi(v_k-v_{k-1}) h_{v_{n-i}}^{s,(n-i-1)} dv_{n-i} \ldots dv_n \nonumber\\
&+\mu \int_s^t \int_s^{v_n} \cdots \int_s^{v_2} \prod_{k=2}^{n} \Phi(v_k-v_{k-1}) dv_1\ldots dv_n.
\end{align}

Using Lemma \ref{lemma:magic},

\begin{align*}
&\sum_{i=1}^{n-2} \int_s^t \int_s^{v_n} \cdots \int_s^{v_{n-i+1}} \prod_{k=n-i+1}^{n} \Phi(v_k-v_{k-1}) h_{v_{n-i}}^{s,(n-i-1)} dv_{n-i} \ldots dv_n \\
&=\sum_{i=1}^{n-2} \int_s^t \int_s^{u} \Phi_i(u-r) h_{r}^{s,(n-i-1)} dr du,
\end{align*}
and 
$$\mu \int_s^t \int_s^{v_{n}} \cdots \int_s^{v_2} \prod_{k=2}^{n} \Phi(v_k-v_{k-1}) dv_1\ldots dv_n=\mu \int_s^t \int_s^{u} \Phi_{n-1}(u-r) dr du.$$
Plugging back these expressions in (\ref{eq:cond1}) we get 
\begin{align}
\label{eq:cond2}
&\E_{s-}\left[\int_{(s,t]} dX_r^{(n)} \right] \nonumber\\
&=\int_s^t h_{u}^{s,(n-1)} du +\sum_{i=1}^{n-2} \int_s^t \int_s^{u} \Phi_i(u-r) h_{r}^{s,(n-i-1)} dr du +\mu \int_s^t \int_s^{u} \Phi_{n-1}(u-r) dr du.
\end{align}
We now sum the previous quantity over $n \geq 2$. The main term to be treated is the second one that we treat separately below. 
Note also that using Convention \ref{convention:sums} for $n=2$ we get 
\begin{align}
\label{eq:tempdoublesum}
&\sum_{n=2}^{+\infty} \sum_{i=1}^{n-2} \int_s^t \int_s^{u} \Phi_i(u-r) h_{r}^{s,(n-i-1)} dr du \nonumber\\
&=\sum_{n=3}^{+\infty} \sum_{i=1}^{n-2} \int_s^t \int_s^{u} \Phi_i(u-r) h_{r}^{s,(n-i-1)} dr du \nonumber\\
&= \sum_{n=3}^{+\infty} \sum_{j=1}^{n-2} \int_s^t \int_s^{u} \Phi_{n-j-1}(u-r) h_{r}^{s,(j)} dr du \nonumber\\
&= \sum_{j=1}^{+\infty} \int_s^t \int_s^{u}  h_{r}^{s,(j)} \left(\sum_{n=3}^{+\infty}  \ind{j\leq n-2} \Phi_{n-j-1}(u-r) \right)dr du \nonumber\\
&= \int_s^t \int_s^{u}  h_{r}^{s,(1)} \left(\sum_{n=3}^{+\infty} \Phi_{n-2}(u-r) \right)dr du + \sum_{j=2}^{+\infty} \int_s^t \int_s^{u}  h_{r}^{s,(j)} \left(\sum_{n=j+2}^{+\infty} \Phi_{n-j-1}(u-r) \right)dr du \nonumber\\
&= \int_s^t \int_s^{u}  h_{r}^{s,(1)} \left(\sum_{k=1}^{+\infty} \Phi_{k}(u-r) \right)dr du + \sum_{j=2}^{+\infty} \int_s^t \int_s^{u}  h_{r}^{s,(j)} \left(\sum_{k=1}^{+\infty} \Phi_{k}(u-r) \right)dr du \nonumber\\
&= \int_s^t \int_s^{u}  h_{r}^{s,(1)} \Psi(u-r) dr du + \sum_{j=2}^{+\infty} \int_s^t \int_s^{u}  h_{r}^{s,(j)} \Psi(u-r) dr du \nonumber\\
&=\sum_{j=1}^{+\infty} \int_s^t \int_s^{u}  h_{r}^{s,(j)} \Psi(u-r) dr du \nonumber\\
&=\int_s^t \int_s^{u}  \Psi(u-r) h_{r}^{s}  dr du.
\end{align}
With these computations at hand, Relations (\ref{eq:cond1}) and (\ref{eq:cond2}) lead to
\begin{align*}
&\E_{s-}\left[\int_{(s,t]} dX_r \right] \\
&=\E_{s-}\left[\int_{(s,t]} dX_r^{(n)} \right] + \sum_{n=2}^{+\infty} \E_{s-}\left[\int_{(s,t]} dX_r^{(n)} \right] \\
&= \int_s^t \mu du + \sum_{n=2}^{+\infty} \int_s^t h_{u}^{s,(n-1)} du +  \sum_{n=2}^{+\infty} \sum_{i=1}^{n-2} \int_s^t \int_s^{u} \Phi_i(u-r) h_{r}^{s,(n-i-1)} dr du +\mu \sum_{n=2}^{+\infty} \int_s^t \int_s^{u} \Phi_{n-1}(u-r) dr du \\
&= \int_s^t (\mu+h_u^s) du + \int_s^t \int_s^{u}  \Psi(u-r) (\mu + h_{r}^{s})  dr du,
\end{align*}
which concludes the proof.
\end{proof}

\noindent We now compute the right-hand side in (\ref{eq:martingale}).

\begin{lemma}
\label{lemma:calcullambda}
For any $0 \leq s \leq t$ we have that :
\begin{align}
\E_{s-}\left[\int_s^t \ell_r dr \right] = \int_s^t (\mu+h_u^s) du + \int_s^t \int_s^{u}  \Psi(u-r) (\mu + h_{r}^{s})  dr du.
\end{align}
\end{lemma}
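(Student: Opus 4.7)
The plan is to compute $\E_{s-}\left[\int_s^t \ell_r dr\right]$ directly, imitating the strategy of the proof of Lemma \ref{eq:CalculX}, and show that the resulting expression coincides with the right-hand side of the statement.

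First, by Fubini and the definition of $\ell$ in Definition-Proposition \ref{defintion:definitinHawkes},
$$\E_{s-}\left[\int_s^t \ell_r dr\right] = \mu(t-s) + \int_s^t \E_{s-}\left[\int_{(0,r)} \Phi(r-u) dX_u\right] dr.$$
Decomposing $X=\sum_{n\geq 1} X^{(n)}$ and recognizing each generational contribution via the quantity $Q$ introduced in Lemma \ref{lemma:Qnu}, one has
$$\E_{s-}\left[\int_{(0,r)} \Phi(r-u) dX_u^{(n)}\right] = Q(n+1,r),$$
so that the task reduces to computing $\sum_{n\geq 1}\int_s^t Q(n+1,r)\,dr$.

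Second, I would substitute the closed-form expression for $Q(n+1,r)$ provided by Lemma \ref{lemma:Qnu} into $\int_s^t Q(n+1,r)dr$ and simplify each nested block by Lemma \ref{lemma:magic}. After relabeling variables so that the innermost integration variable becomes $v_1$, a block of the form
$$\int_s^t \int_s^r \int_s^{v_n}\cdots\int_s^{v_{n-i+2}} \Phi(r-v_n)\prod_{k=n-i+2}^n \Phi(v_k-v_{k-1}) \, h_{v_{n-i+1}}^{s,(n-i)} dv_{n-i+1}\cdots dv_n dr$$
contains $i+1$ nested integrals, so that Lemma \ref{lemma:magic} (applied with $n$ replaced by $i+1$ and $f(w)=h_w^{s,(n-i)}$) yields $\int_s^t\!\int_s^u \Phi_i(u-r) h_r^{s,(n-i)} dr du$. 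The same argument applied to the $\mu$-block gives $\mu\int_s^t\!\int_s^u \Phi_n(u-r) dr du$. Putting these pieces together,
$$\int_s^t Q(n+1,r) dr = \int_s^t h_r^{s,(n)} dr + \sum_{i=1}^{n-1}\int_s^t\!\int_s^u \Phi_i(u-r) h_r^{s,(n-i)} dr du + \mu\int_s^t\!\int_s^u \Phi_n(u-r) dr du,$$
valid for every $n\geq 1$ thanks to Convention \ref{convention:sums}.

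Third, I would sum over $n\geq 1$ and exchange sums and integrals, as justified in the computation (\ref{eq:tempdoublesum}) by Proposition \ref{prop:Phin} and Assumption \ref{assumption:Phi}. The first term produces $\int_s^t h_u^s du$ since $\sum_n h_r^{s,(n)}=h_r^s$. The second term is a Cauchy product: reindexing by $j=n-i$ as in (\ref{eq:tempdoublesum}) gives
$$\sum_{n\geq 1}\sum_{i=1}^{n-1}\Phi_i(u-r) h_r^{s,(n-i)} = \Bigl(\sum_{i\geq 1}\Phi_i(u-r)\Bigr)\Bigl(\sum_{j\geq 1} h_r^{s,(j)}\Bigr) = \Psi(u-r)\, h_r^s.$$
The third term contributes $\mu\int_s^t\!\int_s^u \Psi(u-r) dr du$. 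Adding the $\mu(t-s)$ coming from the constant part of $\ell_r$ then yields
$$\E_{s-}\left[\int_s^t \ell_r dr\right] = \int_s^t (\mu+h_u^s) du + \int_s^t\!\int_s^u \Psi(u-r)(\mu+h_r^s) dr du,$$
which is the identity in the statement.

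The main obstacle is purely combinatorial: book-keeping the index shifts when applying Lemma \ref{lemma:magic} to the nested blocks of Lemma \ref{lemma:Qnu}, and justifying the permutations of sum and integral that lead to the $\Psi$-convolution. Both steps are of the same nature as in the proof of Lemma \ref{eq:CalculX}, so no new tool beyond Lemmata \ref{lemma:magic} and \ref{lemma:Qnu} is required.
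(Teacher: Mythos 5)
Your proposal is correct and follows essentially the same route as the paper's proof: both rest on Lemma \ref{lemma:Qnu}, Lemma \ref{lemma:magic}, and the resummation of (\ref{eq:tempdoublesum}). The only (harmless) organizational difference is that you identify $\E_{s-}\bigl[\int_{(0,r)}\Phi(r-u)\,dX_u^{(n)}\bigr]$ directly as $Q(n+1,r)$, whereas the paper first splits off the $\mathcal F_{s-}$-measurable part $h_r^{s,(n)}$ and writes the remainder as $\int_s^r\Phi(r-v_n)Q(n,v_n)\,dv_n$; these agree by the recursion $Q(n+1,r)=h_r^{s,(n)}+\int_s^r\Phi(r-v_n)Q(n,v_n)\,dv_n$.
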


\begin{proof}
The proof is rather similar to the one of Lemma \ref{eq:CalculX}, we provide a proof for the sake of completeness.\\
Let $0 \leq s \leq r \leq t$. Recall Notation \ref{eq:hn}. We have 
\begin{align}
\label{eq:calcultemplambda}
\E_{s-}\left[\ell_r\right] &= \E_{s-}\left[\mu + \int_{(0,r)} \Phi(r-u) dX_u \right] \nonumber\\
&= \mu + \sum_{n=1}^{+\infty} \E_{s-}\left[\int_{(0,r)} \Phi(r-u) dX_u^{(n)} \right] \nonumber\\
&= \mu + \sum_{n=1}^{+\infty} \int_{(0,s)} \Phi(r-u) dX_u^{(n)} +\sum_{n=1}^{+\infty} \E_{s-}\left[\int_{(s,r)} \Phi(r-u) dX_u^{(n)} \right] \nonumber\\
&= \mu + h_r^s + \sum_{n=1}^{+\infty} \E_{s-}\left[\int_{(s,r)} \Phi(r-u) dX_u^{(n)} \right].
\end{align}
Let $n\geq 2$,
\begin{align*}
&\E_{s-}\left[\int_{(s,r)} \Phi(r-u) dX_u^{(n)} \right]\\
&= \E_{s-}\left[\int_{(s,r)} \Phi(r-v_n) \int_{(0,v_n]\times \real_+} \hspace{-1em} \cdots \int_{(0,v_{2}]\times \real_+} \ind{\theta_1 \leq \mu} \prod_{i=2}^{n} \ind{\theta_i \leq \Phi(v_i-v_{i-1})} N(dv_1,d\theta_1) \cdots N(dv_{n},d\theta_{n}) \right] \\
&= \int_s^r \Phi(r-v_n) \E_{s-}\left[\int_{(0,v_n]\times \real_+} \Phi(v_n-v_{n-1}) \int_{(0,v_{n-1}]\times \real_+} \cdots \int_{(0,v_{2}]\times \real_+} \right.\\
&\left. \hspace{5em} \ind{\theta_1 \leq \mu} \prod_{i=2}^{n-1} \ind{\theta_i \leq \Phi(v_i-v_{i-1})} N(dv_1,d\theta_1) \cdots N(dv_{n-1},d\theta_{n-1}) \right] dv_n \\
&=\int_s^r \Phi(r-v_n) Q(n,v_n) dv_n\\
&=\int_s^r \Phi(r-v_{n}) h_{v_{n}}^{s,(n-1)} dv_{n}\\
&+\sum_{i=1}^{n-2} \int_s^r \Phi(r-v_{n}) \int_s^{v_{n}} \cdots \int_s^{v_{n-i+1}^*} \prod_{k=n-i+1}^{n} \Phi(v_k-v_{k-1}) h_{v_{n-i}}^{s,(n-i-1)} dv_{n-i} \ldots dv_{n-1}  dv_{n}\\
&+ \mu \int_s^r \Phi(r-v_{n}) \int_s^{v_{n}} \int_s^{v_{n-1}} \cdots \int_s^{v_2} \prod_{k=2}^{n} \Phi(v_k-v_{k-1}) dv_1\ldots dv_{n-1} dv_{n}
\end{align*}
where the last equality follows from Lemma \ref{lemma:Qnu}.
Integrating the previous expression in $r$ on $(s,t]$ and using Lemma \ref{lemma:magic} one gets
\begin{align*}
&\int_s^t \E_{s-}\left[\int_{(s,r)} \Phi(r-u) dX_u^{(n)} \right] dr\\
&=\int_s^t \int_s^{v_{n}} \Phi(v_n-v_{n+1}) h_{v_{n+1}}^{s,(n-1)} dv_{n+1} dv_n\\
&+\sum_{i=1}^{n-2} \int_s^t \int_s^{v_{n+1}} \int_s^{v_{n}} \cdots \int_s^{v_{n-i+1}^*} \prod_{k=n-i+1}^{n+1} \Phi(v_k-v_{k-1}) h_{v_{n-i}}^{s,(n-i-1)} dv_{n-i} \ldots dv_{n-1} dv_{n} dv_{n+1}\\
&+ \mu \int_s^t \int_s^{v_{n+1}} \int_s^{v_{n}} \int_s^{v_{n-1}} \cdots \int_s^{v_2} \prod_{k=2}^{n+1} \Phi(v_k-v_{k-1}) dv_1\ldots dv_{n-1} dv_{n} dv_{n+1}\\
&=\int_s^t \int_s^u \Phi(u-r) h_{r}^{s,(n-1)} dr du\\
&+\sum_{i=1}^{n-2} \int_s^t \int_s^u \Phi_{i+1}(u-r) h_{r}^{s,(n-i-1)} dr du \\
&+ \mu \int_s^t \int_s^u \Phi_n(u-r) dr du
\end{align*}
Using the same computations than (\ref{eq:tempdoublesum}) we deduce (using Convention \ref{convention:sums}) that 
\begin{align*}
&\sum_{n=1}^{+\infty} \int_s^t \E_{s-}\left[\int_{(s,r)} \Phi(r-u) dX_u^{(n)} \right] dr\\
&=\int_s^t \E_{s-}\left[\int_{(s,r)} \Phi(r-u) dX_u^{(1)} \right] + \sum_{n=2}^{+\infty} \int_s^t \E_{s-}\left[\int_{(s,r)} \Phi(r-u) dX_u^{(n)} \right] dr\\
&=\mu \int_s^t \int_s^u \Phi(r-u) dr du+\sum_{n=2}^{+\infty} \int_s^t \int_s^u \Phi(u-r) h_{r}^{s,(n-1)} dr du\\
&+\sum_{n=3}^{+\infty} \sum_{i=1}^{n-2} \int_s^t \int_s^u \Phi_{i+1}(u-r) h_{r}^{s,(n-i-1)} dr du + \sum_{n=2}^{+\infty} \mu \int_s^t \int_s^u \Phi_n(u-r) dr du \\
&=\mu \int_s^t \int_s^u \Psi(u-r) dr du \\
&+\sum_{n=2}^{+\infty} \int_s^t \int_s^u \Phi(u-r) h_{r}^{s,(n-1)} dr du\\
&+\sum_{n=3}^{+\infty} \sum_{j=2}^{(n+1)-2} \int_s^t \int_s^u \Phi_{j}(u-r) h_{r}^{s,(n+1-j-1)} dr du \\
&=\mu \int_s^t \int_s^u \Psi(u-r) dr du +\sum_{n=2}^{+\infty} \int_s^t \int_s^u \Phi(u-r) h_{r}^{s,(n-1)} dr du +\sum_{n=2}^{+\infty} \sum_{j=2}^{n-2} \int_s^t \int_s^u \Phi_{j}(u-r) h_{r}^{s,(n-j-1)} dr du\\
&=\mu \int_s^t \int_s^u \Psi(u-r) dr du +\sum_{n=2}^{+\infty} \sum_{j=1}^{n-2} \int_s^t \int_s^u \Phi_{j}(u-r) h_{r}^{s,(n-j-1)} dr du\\
&=\mu \int_s^t \int_s^u \Psi(u-r) dr du + \int_s^t \int_s^u \Psi(u-r) h_{r}^{s} dr du \\
&=\int_s^t \int_s^u \Psi(u-r) (\mu+h_{r}^{s}) dr du.
\end{align*}
where we have used Relation (\ref{eq:tempdoublesum}). Thus we have proved that 
\begin{align*}
\int_s^t \E_{s-}\left[\int_{(s,r)} \Phi(r-u) dX_u \right] dr  &= \sum_{n=1}^{+\infty} \int_s^t \E_{s-}\left[\int_{(s,r)} \Phi(r-u) dX_u^{(n)} \right] dr \\
&=\int_s^t \int_s^u \Psi(u-r) (\mu+h_{r}^{s}) dr du.
\end{align*}
Hence coming back to Relation (\ref{eq:calcultemplambda}) we obtain
$$\E_{s-}\left[\int_s^t \ell_r dr\right]  = \int_s^t (\mu + h_u^s) du + \int_s^t \int_s^u \Psi(u-r) (\mu+h_{r}^{s}) dr du.$$
\end{proof}

%\bibliographystyle{plain}
%\bibliography{biblioHR}

\end{document}